\definecolor{bleu_sombre}{rgb}{0,0,0.6}  \definecolor{rouge_sombre}{rgb}{0.8,0,0}\definecolor{vert_sombre}{rgb}{0,0.6,0}
\theoremstyle{plain}
\newtheorem{theorem}{Theorem}[section]
\newtheorem{lemma}[theorem]{Lemma}
\newtheorem{corollary}[theorem]{Corollary}
\newtheorem{proposition}[theorem]{Proposition}
\theoremstyle{definition}
\newtheorem{remark}[theorem]{Remark}
\newtheorem{notation}{Notation}
\newtheorem{definition}[theorem]{Definition}
\newtheorem{assumption}[theorem]{Assumption}
\numberwithin{equation}{section}
  \def\cC{\mathcal{C}}
  \def\cF{\mathcal{F}}
  \def\cI{\mathcal{I}}
  \def\cL{\mathcal{L}}
\def\cM{\mathcal{M}} \def\cN{\mathcal{N}} \def\cO{\mathcal{O}}
\def\cP{\mathcal{P}} \def\cQ{\mathcal{Q}}
\def\NN{\mathbb{N}}
\def\ZZ{\mathbb{Z}}
\def\RR{\mathbb{R}}
\def\CC{\mathbb{C}}
\def\({\left(}
\def\){\right)}
\def\<{\left\langle}
\def\>{\right\rangle}
\def\leq{\leqslant}
\def\geq{\geqslant}
\def\lc{\left[}
\def\rc{\right]}
\def\lb{\left\{}
\def\rb{\right\}}
\def\llc{\llbracket}
\def\rrc{\rrbracket}
\def\ie{\textrm{~i.e.~}}
\newcommand{\valabs}[1]{\left\lvert #1 \right\rvert}  
\newcommand{\autopar}[1]{\left( #1 \right)} 
\newcommand{\diff}{\mathop{}\mathopen{}\mathrm{d}}  
\newcommand{\Id}{\operatorname{Id}}
\newcommand{\be}{\begin{equation}}
\newcommand{\ee}{\end{equation}}
\newcommand{\bea}{\begin{eqnarray}}
\newcommand{\eea}{\end{eqnarray}}
\newcommand{\bee}{\begin{eqnarray*}}
\newcommand{\eee}{\end{eqnarray*}}
\begin{document}

\title[Semiclassical spectrum of the Dirichlet-Pauli operator on an annulus]{Semiclassical spectrum \\ of the Dirichlet-Pauli operator \\ on an annulus}
\author[E. Lavigne Bon]{E. Lavigne Bon}
\email{enguerrand.lavigne-bon@univ-amu.fr}
\address[E. Lavigne Bon]{Aix-Marseille Univ., CNRS, I2M}

    \begin{abstract}
This paper is devoted to the semiclassical analysis of the spectrum of the Dirichlet-Pauli operator on an annulus. We assume that the magnetic field is strictly positive and radial. We give an explicit asymptotic expansion at the first order of the lowest eigenvalues of this operator in the semiclassical limit. In particular, we exhibit the Aharonov-Bohm effect that has been revealed, for constant magnetic field, in a recent paper by Helffer and Sundqvist.
    \end{abstract}

\maketitle

\vspace{-0.5cm}

\begin{footnotesize}
\tableofcontents
\end{footnotesize}

\newpage

            \section{Introduction} \label{section.introduction}

This paper deals with the low-lying spectrum of the Dirichlet-Pauli operator. This operator is the Hamiltonian of the Pauli's equation, which models non-relativistic spin-1/2 particules interacting with a magnetic field. For a particle of mass and electric charge equal to 1, in a pure magnetic field perpendicular to the domain, the Pauli's equation reads
    \begin{equation*}
    ih \partial_t \Psi = \cP_h \, \Psi, 
    \end{equation*}
where $\Psi : \RR \times \Omega \to \CC^2$ is the state of the system, $\Omega$ a smooth bounded domain of $\RR^2$ and $h>0$ the semiclassical parameter. We associate to the magnetic field (which is assumed to be smooth), denoted by $B$, a magnetic vector potential $A= (A_1 , A_2 )$ satisfying $B = \partial_x A_2 - \partial_y A_1$. \\
The Dirichlet-Pauli operator acts as
    \begin{equation*}
    \cP_h = \left[  \sigma  \cdot \left( -ih\nabla - A \right) \right]^2 \; \; \text{on} \; \; \, \mathrm{H}^2 \left( \Omega,\CC^2 \right)  \cap \mathrm{H}^{1}_{0} \left( \Omega,\CC^2 \right),
    \end{equation*}
with $\sigma = \left( \sigma_1 , \sigma_2 \right)$, the Pauli matrices (Hermitian, unitary)
    \begin{equation} \label{eq.matrices-Pauli}
\sigma_1 =
\begin{pmatrix}
0 & 1 \\
1 & 0
\end{pmatrix},
\quad
\sigma_2 =
\begin{pmatrix}
0 & -i \\
i & 0
\end{pmatrix},
\quad
\sigma_3 =
\begin{pmatrix}
1 & 0 \\
0 & -1
\end{pmatrix}  ,
    \end{equation}
and $\sigma \cdot \mathrm{\mathbf{v}} = \sigma_1  \mathrm{v_1} + \sigma_2  \mathrm{v_2} $ for $\mathrm{\mathbf{v}} = \left(  \mathrm{v_1} ,  \mathrm{v_2} \right) \in \CC^2$. Notice that to ensure its selfadjointness we assume that boundary carries Dirichlet conditions.
Let $\left(  \lambda_k (h) \right)_{k \in \NN^{*}}$ be the real non-decreasing sequence of eigenvalues of $\cP_h$ counting multiplicity.

\medskip

The Dirichlet-Pauli operator has been the subject of many recent works. In particular, the paper \cite{barbaroux:hal-01889492} presents a new approach, for simply connected domains, leading to very accurate estimates of the low-lying spectrum. Helffer and Sundqvist \cite{helffer2017semi} proved the exponential decay rate of the ground state in the case of a connected domain. They also given, in the same paper, numerical results describing the behavior of the first eigenvalues on an annulus with a constant magnetic field.

Here, we consider an annulus $\Omega$ and we assume that the magnetic field is strictly positive and radial.
We give explicitly the first term of the asymptotic expansion for the first eigenvalues. Let us informally describe our main result. We prove in this paper that, for all $k\in \NN^*$,
\begin{equation*} 
    \lambda_k (h) =  \min_{\tiny\begin{array}{c}V\subset\mathbb{Z}\\\# V = k\end{array}}\max_{m\in V} \, f \left( m- \frac{c_0}{h} \right) \, \sqrt{h} \; e^{\frac{2 \phi_{\min}}{h}} \; (1+o_{h \rightarrow 0}(1)) \, ,
\end{equation*}
with $c_0 \in \mathbb{R}$, $\phi_{\min}$ a strictly negative constant and $f :  \mathbb{R} \rightarrow  \mathbb{R}$ a coercive function known explicitly according to the magnetic field and $\Omega$.

The prefactor in $\lambda_k (h)$ encodes two kinds of gauge invariance. Of course, the standard magnetic gauge invariance (adding a gradient to the magnetic
potential) leaves $\phi_{\rm min}, f$ and $c_0$ invariant. The presence
of the hole introduces another degree of freedom given by conjugating the operator by $e^{ip\, \mathrm{arg}}$ ($p\in\mathbb{Z}$, $\mathrm{arg}$ being the argument). The invariance by translation by an integer of the prefactor of $\lambda_k(h)$ is reminiscent of this invariance (see Section \ref{subsection.class.potentiel}). 

Note that the constant $c_0$ depends mainly on the circulation of the magnetic potential on the interior boundary. When it does not vanish, we observe an oscillation of the eigenvalues in the semiclassical limit through the 1-periodicity of 
\begin{equation*} 
    \displaystyle d \longmapsto  \min_{\tiny\begin{array}{c}V\subset\mathbb{Z}\\\# V = k\end{array}}\max_{m\in V} \, f \left( m- d \right).
\end{equation*}
This influence of the circulation of the magnetic potential on the eigenvalues evokes the Aharonov-Bohm effect. It has already been mentioned in \cite{helffer2017semi}, for more details on this phenomenon see the original reference  \cite{aharonov1959significance}, for a new point of view see also \cite{de2020new}.\\
This result highlights some phenomena that should also appear in doubly connected case (without symetry) and in multiconnected domains.
        \subsection{Hypothesis and definition of the Dirichlet-Pauli operator} \label{subsection.hypo_and_Pauli_operator} ~ \\
Let $\Omega \subset \mathbb{R}^2$ be an annulus centered at the origin with radius $0 < \rho_1 < \rho_2 $. We let
\begin{equation} \label{eq.Omega}
    \partial \Omega = \partial\Omega_{\mathrm{int}} \bigsqcup \partial\Omega_{\mathrm{ext}},
\end{equation}
with $\partial\Omega_{\mathrm{int}} = \cC (0, \rho_1 )$ and $\partial\Omega_{\mathrm{ext}} = \cC (0, \rho_2 )$.\\
Consider  a magnetic field $B \in \mathcal{C}^{\infty} \left( \overline{\Omega} , \RR \right) $. Despite the presence of a hole, there exists a regular vector potential denoted by $A= ( A_1 , A_2 )$, which satisfies
    \begin{equation} 
    \label{eq.rotationnel}
    B= \mathrm{rot} \autopar{A} = \partial_{x} A_2 - \partial_{y} A_1  .
    \end{equation}
Note however that two Pauli operators with two vector potentials associated with $B$ are not necessarily unitarily equivalent (see Section \ref{section.invariants}).
    \begin{assumption} \label{hypothese1}
    The magnetic field is {\bf radial} and {\bf strictly positive}.
    \end{assumption}
    \subsubsection{The Dirichlet-Pauli operator}
We are interested in the Dirichlet-Pauli operator $\autopar{\cP_h , \mathrm{Dom}\autopar{\cP_h}}$ defined for all $h>0$ as
$$
\cP_h = \lc \sigma  \cdot \autopar{{\bf p} - A} \rc^2 =  
\begin{pmatrix}
\vert {\bf p} - A \vert^2 -hB & 0 \\ 
0 & \vert {\bf p} - A \vert^2 +hB
\end{pmatrix}=\begin{pmatrix}
\cL^{-}_h & 0 \\
0 & \cL^{+}_h
\end{pmatrix} ,
$$
acting on the domain
$$
\mathrm{Dom}\autopar{\cP_h} =  \mathrm{H}^2 \left( \Omega,\CC^2 \right)  \cap \mathrm{H}^{1}_{0} \left( \Omega,\CC^2 \right).
$$
Here $ {\bf p} = -ih \nabla $.
This operator is non-negative, self-adjoint with compact resolvent. By the spectral theorem, the spectrum of $\autopar{\cP_h , \mathrm{Dom}\autopar{\cP_h}}$ is real, discrete and can be written as a sequence tending to $+ \infty$. 
\\
The purpose of this paper is to investigate the behavior of the lowest eigenvalues in the semiclassical limit. Since $B>0$ on $\overline{\Omega}$, it is enough to study the spectrum of $\cL^{-}_h$.
    \begin{notation} \label{notation.valeurs_propres}
    Let $\autopar{\lambda_k (h)}_{k\in\NN^{*}} $ denote the non-decreasing sequence of eigenvalues of $\mathcal{P}_h$. 
    \end{notation}
By the min-max theorem, we have the following characterization
    \begin{equation}
    \label{notation.minmax1}
    \lambda_k (h) = \underset{\underset{\mathrm{dim}V = k}{V \subset H_{0}^{1} ( \Omega , \CC^{2} ) }}{\mathrm{inf}} \; \underset{u\in V\backslash \{ 0 \} }{\mathrm{sup}} \; \frac{\left\|\sigma \cdot \autopar{{\bf p } - A} u \right\|_{L^{2}\autopar{\Omega}}^2}{\left\|u\right\|_{L^{2}\autopar{\Omega}}^2} .
    \end{equation}
    \subsubsection{Scalar potential} \label{subsubsection.phi}
The choice of $A$ will play an important role. A particular choice is associated with the scalar potential, $\phi$ being the unique solution in $H_0^1 \left( \Omega , \RR \right)$ (cf. \cite[Theorem 6, p.326]{evans2010partial}) of the Poisson equation
    \begin{equation}
    \label{eq.phi}
      \left\{
      \begin{array}{ll}
        \Delta \phi = B & \text{on} \; \Omega \\
        \phi = 0  & \text{in} \; \partial \Omega .
     \end{array}
    \right.
    \end{equation}
Since $B$ is positive, $\phi$ is subharmonic and satisfies
$$
\underset{x \in \overline{\Omega}}{\mathrm{max}}\;  \phi = \underset{x \in \partial\Omega}{\mathrm{max}} \; \phi = 0 \;,
$$
by the maximum principle.
In particular, the minimum of $\phi$ is negative and attained in $\Omega$. We note that the exterior normal derivative of $\phi$, denoted $\partial_n \phi$, is strictly positive on $\partial \Omega$ from \cite[Hopf's Lemma, p.330]{evans2010partial}. 
    \begin{remark} \label{remark.phimin}
Assumption  \ref{hypothese1} on the magnetic field and the uniqueness of $\phi$ ensures that $\phi$ is radial and admits a unique circle of minimum centered at the origin and of radius $\mathrm{r}_{\mathrm{min}} \in ]\rho_1 , \rho_2[$. We note $\phi_{\min}$ the minimum of $\phi$. In polar coordinates, $\phi$ is the solution of 
\begin{equation*}
  \left\{
  \begin{array}{ll}
     \phi''(r) + \frac{1}{r} \phi' (r) = B(r) & \text{on } \; ]\rho_1 , \rho_2[ \\
    \phi\autopar{\rho_1} = \phi \autopar{\rho_2} =0 \; . &
 \end{array}
\right.
\end{equation*}
so that $\phi '' (r_{\min})=B(r_{\min}) \geq B_0=\inf\{B(x) : x\in\Omega\}$.  
    \end{remark}

    \subsection{Results and discussions} \label{subsection.results_discus}

Some recent works have investigated the semiclassical limit of the bottom of the spectrum:

\medskip

    \begin{enumerate}
    \item \label{results_discus.pt1} In the non-simply connected case, Helffer and Sundqvist have proved, in \cite{helffer2017semi}, that if the magnetic field is positive, then
$$
 \lambda_1 (h)  = \exp{\left( \frac{2 \phi_{\min}}{h} + o_{h\rightarrow 0} \left( \frac{1}{h} \right)\right)}.
$$
The techniques employed for the lower bound are mainly based on those used by Ekholm, Kova{\v{r}}{\'\i}k and Portmann in \cite{ekholm2016estimates}. The main novelty of their proof is the combination of some gauge invariances and the Hodge-de Rham theory to control the oscillations induced by the circulation of the magnetic potential.\\
In \cite[Section 7]{helffer2017semi}, a numerical analysis of the smallest eigenvalues is realized in the case of a constant magnetic field  on the annulus, by means of a finite difference method. It relies on a Fourier decomposition, parametrized by the circulation of the magnetic potential. 
    \item \label{results_discus.pt2} In the simply connected case, Barbaroux, Le Treust, Raymond and Stockmeyer have proved, in \cite{barbaroux:hal-01889492}, that for all $k\in \NN^*$, there exist $C_{inf}(k), C_{sup}(k) >0$ such that
$$
C_{inf}(k) h^{1-k} e^{2 \phi_{\min} / h}(1 + o_{h \rightarrow 0}(1)) \leq \lambda_k (h) \leq C_{sup}(k) h^{1-k} e^{2 \phi_{\min} / h}(1 + o_{h \rightarrow 0}(1)) \,,
$$
under the assumption that $B$ is positive and that $\phi$ has a unique minimum, which is non-degenerate.
The prefactors $C_{inf} (k)$ and $C_{sup} (k)$, are given explicitly. Their strategy is based on the Riemann mapping theorem and the connection between the spectral analysis of Dirichlet-Pauli operator and Cauchy-Riemann operators. In particular, the lower bound, established by a holomorphic approximation result \cite[Proposition 5.4]{barbaroux:hal-01889492}, is a consequence of the ellipticity of Cauchy-Riemann operators.
    \end{enumerate}
\medskip
The main result of this article is the following.
    \begin{theorem} \label{theorem.principal} 
Let $B\in \cC^\infty \autopar{\overline{\Omega}, \RR}$ be radial such that 
$$B_0 = \mathrm{inf} \{  B(x), \; x \in \Omega  \} > 0,$$ 
and $A\in \mathcal{C}^{\infty} \left( \overline{\Omega},\RR^2 \right)$ be an associated vector potential. \\
Then, for all fixed $k\in \NN^*$, we have
    \begin{equation*}
    \lambda_{k} (h) = \alpha_k (h) \; \sqrt{h} \; e^{2\phi_{\min} / h} (1 + o_{h\to 0}(1)),
    \end{equation*}
where
$$
\alpha_k(h) = \min_{\tiny\begin{array}{c}V\subset\ZZ\\\# V = k\end{array}}\max_{m\in V} \, f \autopar{m- \frac{c_0}{h} }  \; \text{and} \; \; c_0 =  \rho_1 \partial_r \phi \autopar{\rho_1 } -  \frac{1}{2\pi}\int_{\partial \Omega_{int}} A\,,
$$
with $f :  \RR \rightarrow  \RR$ given by
$$
f(m) =  2 \sqrt{\frac{B(r_{\min})}{\pi}} \autopar{\partial_n\phi\autopar{\rho_1} \autopar{\frac{\rho_1}{r_{\min}} }^{2m+1} + \partial_n\phi\autopar{\rho_2} \autopar{\frac{\rho_2}{r_{\min}} }^{2m+1}  } \, .
$$
    \end{theorem}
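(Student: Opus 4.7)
The plan is to exploit the rotational symmetry to reduce $\cL_h^-$ to a $\ZZ$-indexed family of explicit one-dimensional problems, and then to compute each ground state by a Cauchy--Riemann factorization combined with Laplace's method and a boundary-layer analysis.

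\textbf{Step 1: gauge reduction and Fourier decomposition.} Since $B>0$, the block $\cL_h^+ = |\mathbf{p}-A|^2 + hB \geq hB_0$, so the low-lying spectrum of $\cP_h$ comes entirely from $\cL_h^-$. Writing the Hodge-type decomposition on the annulus $A = A_0 + \alpha\, d\theta + \nabla\psi$, with $A_0=(-\partial_y\phi,\partial_x\phi)$ the divergence-free gauge attached to $\phi$, $\psi$ single-valued, and $\alpha\in\RR$, I remove $\nabla\psi$ by unitary conjugation with $e^{i\psi/h}$. Computing $\int_{\partial\Omega_{\rm int}}A_0 = 2\pi\rho_1\phi'(\rho_1)$ identifies $\alpha$ with $-c_0$. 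In this gauge $\cL_h^-$ commutes with rotations and decomposes along Fourier modes $\{e^{im\theta}\}_{m\in\ZZ}$ as a direct sum $\bigoplus_m \cL_{h,m}^-$, each block acting on $L^2((\rho_1,\rho_2), r\, dr)$ with Dirichlet conditions.

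\textbf{Step 2: Cauchy--Riemann factorization and quasi-modes.} Each 1D block factors as $\cL_{h,m}^- = D_m^* D_m$ with
\[
D_m = h\partial_r + \phi'(r) - \frac{h\xi_m}{r}, \qquad \xi_m := m - \frac{c_0}{h},
\]
the adjoint being taken in $L^2(r\,dr)$; one checks directly that $V'+V/r=B$ for $V=\phi'-h\xi_m/r$. The boundary-free kernel of $D_m$ is one-dimensional, spanned by $v_m(r)=r^{\xi_m}e^{-\phi(r)/h}$. For each $m\in\ZZ$ I build a true Dirichlet quasi-mode $\tilde v_m = \chi_m v_m$, with $\chi_m$ the near-optimal cut-off solving $(v_m^2 r\chi_m')'=0$ in thin layers near $\rho_1,\rho_2$. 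Since $D_m v_m=0$, one has $D_m\tilde v_m = h\chi_m' v_m$, supported only in the boundary layers.

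\textbf{Step 3: sharp prefactor via two Laplace computations.} The Rayleigh quotient of $\tilde v_m$ splits into two asymptotic expansions. The denominator $\|\tilde v_m\|^2 = \int r^{2\xi_m+1}e^{-2\phi/h}\,dr$ concentrates at $r_{\min}$ where $\phi''(r_{\min})=B(r_{\min})>0$ by Remark \ref{remark.phimin}, and Laplace's method gives $r_{\min}^{2\xi_m+1}\sqrt{\pi h / B(r_{\min})}\,e^{-2\phi_{\min}/h}(1+o(1))$. The numerator $h^2\int(\chi_m')^2 v_m^2 r\,dr$ is concentrated in boundary layers of width $\sim h$; linearising $\phi(r)\approx\phi'(\rho_j)(r-\rho_j)$ near each boundary and using the optimal cut-off yields $2h(\partial_n\phi(\rho_1)\rho_1^{2\xi_m+1} + \partial_n\phi(\rho_2)\rho_2^{2\xi_m+1})(1+o(1))$. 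Their ratio is exactly $f(\xi_m)\sqrt{h}\,e^{2\phi_{\min}/h}(1+o(1))$, the desired upper bound on the mode-wise ground state $\mu_m(h)$. The matching lower bound is obtained by the substitution $w=v_m u$ in the Rayleigh quotient, which identifies $\cL_{h,m}^-$ on $H_0^1$ with the weighted Laplacian $-h^2(rv_m^2)^{-1}\partial_r(rv_m^2\partial_r)$ on $L^2(rv_m^2\,dr)$ with Dirichlet conditions in $u$; a weighted Cauchy--Schwarz inequality between each boundary and $r_{\min}$ reproduces the same prefactor, in the spirit of the holomorphic-approximation arguments of \cite{barbaroux:hal-01889492}.

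\textbf{Step 4: assembly and main obstacle.} A standard spectral-gap analysis (the second eigenvalue of each $\cL_{h,m}^-$ is of order $h$, hence far above the exponentially small $\mu_m(h)$) shows that the eigenvalues of $\cL_h^-$ in the range $O(\sqrt{h}e^{2\phi_{\min}/h})$ are exactly the collection $\{\mu_m(h)\}_{m\in\ZZ}$ arranged in non-decreasing order. Coercivity of $f$ makes this set discrete and well-ordered, and its $k$-th smallest value is $\min_{V\subset\ZZ,\#V=k}\max_{m\in V}f(m-c_0/h)\sqrt{h}e^{2\phi_{\min}/h}$, yielding the announced expansion. The main obstacle is \emph{uniformity}: the Laplace and boundary-layer asymptotics must hold with constants uniform over the (finitely many, for fixed $k$) integers $m$ achieving the $k$ smallest values of $f(\xi_m)$ (for which $\xi_m$ is bounded while $m$ can be of order $1/h$), and one must simultaneously rule out any contribution from the remaining modes, using the coercivity of $f$ together with the $\gtrsim h$ spectral gap of each 1D block.
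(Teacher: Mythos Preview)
Your architecture matches the paper's: gauge normalisation via the Hodge decomposition on the annulus, Fourier fibration, quasi-modes $\chi v_m$ for the upper bound with the two Laplace computations you describe, the spectral gap $\lambda_{2,m}(h)\geq 2hB_0$ coming from the factorisation, and assembly via uniform-in-$m$ control plus coercivity of $f$. The one genuine divergence is the fiber-wise \emph{lower} bound. The paper does not argue by a weighted Cauchy--Schwarz on the transformed Rayleigh quotient; instead it proves uniform elliptic estimates for $d_{h,m}^\times$ (Proposition~\ref{proposition_inegalite_ellipticite}), uses them to show that the true eigenfunction is, on a slightly shrunk interval, close to its monomial projection (Proposition~\ref{propo:approx_holo}), and then evaluates the quotient on that approximation (Lemmas~\ref{lemme.cvu_majoration_L2norme}--\ref{lemme2_cvu_min_energie}). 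Your route is lighter and does work, with one caveat you should make explicit: writing $W=rv_m^2$ and $J_1(r)=\int_{\rho_1}^r W^{-1}$, $J_2(r)=\int_r^{\rho_2}W^{-1}$, the optimal pointwise Cauchy--Schwarz bound $|u(r)|^2\leq \bigl(\int W|u'|^2\bigr)\,J_1(r)J_2(r)/(J_1(r)+J_2(r))$ has global maximum $\tfrac14\int W^{-1}$, \emph{not} $I_1I_2/(I_1+I_2)$ with $I_j=J_j(r_{\min})$; combining it with $\int W|u|^2\leq\sup_r|u|^2\int W$ therefore misses the sharp prefactor whenever $I_1\neq I_2$. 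You must first use the Laplace concentration of $W$ to restrict the relevant $r$ to a shrinking window around $r_{\min}$, where $J_j(r)\to I_j$, and only then apply the pointwise bound. The paper's heavier elliptic machinery also yields structural information on the eigenfunctions (Propositions~\ref{propo:localisation_fct_propre}--\ref{propo:approx_holo}) that it recycles for the uniform-convergence and weak-coercivity parts of Proposition~\ref{propo.CVU.coercivite}; in your scheme these have to be verified directly on the Laplace and boundary-layer expansions, which is feasible but is indeed, as you note, where the real work lies.
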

    \begin{remark} We note that, unlike \cite[Theorem 1.3]{barbaroux:hal-01889492}, the decreasing behavior in $h$ is the same for each of the eigenvalues. This comes from the fact that the scalar potential does not admit a unique non-degenerate minimum.
    \end{remark}
We give here a brief outline of the ideas to establish the main theorem.
    \begin{enumerate}[label = \roman*.]
    \item In Section \ref{section.invariants}, we follow \cite{helffer2017semi} to select a useful gauge. The explicit description of the Hodge-de Rham theory reveals the role of circulations when writing the magnetic potential, see Proposition \ref{propo.potentiel.flux}. In Section \ref{subsection.fibration_pauli}, we make a fibration of the Dirichlet-Pauli operator by means of Fourier series: 
$$
\cL^{-}_{h} = \bigoplus_{m\in \ZZ} \, \cL_{h,m}^- .
$$

    \item  Techniques used in \cite{barbaroux:hal-01889492} and in Sections \ref{section.estim.elliptiques} and \ref{sec:majoration} share common features. First, we prove uniform ellipticity inequalities for the Dirac operator for fixed $m$. Then, we deduce a lower bound for $k \geq 2$ of the $k$-th eigenvalue $\lambda_{k,m} (h)$ of $\cL_{h,m}^-$. Finally, with the upper bound on $\lambda_{1,m} (h)$, we deduce a localization and a monomial approximation for the eigenfunctions associated to $\lambda_{1,m} (h)$. 
    \item  The proof of Theorem \ref{theorem.principal} relies on Proposition \ref{propo.CVU.coercivite}:
    \begin{enumerate}
    \item The lowest eigenvalues of $\cL^-_h$ are, in the semiclassical limit, ground states of $\cL^-_{h,m}$,  see Lemma \ref{lemme:j=1}.
    \item The sequence of renormalized eigenvalues,
$$
f_{1,h} (m) = \frac{\lambda_{1,m} (h)}{ \sqrt{h} \, e^{2 \phi_{\min} /h}} ,
$$
with $\phi_{\min}$ defined in Remark \ref{remark.phimin}, converges uniformly (in $m$) on any compact of $\RR$, when $h$ tends to $0$. Moreover, it also verifies a property of weak coercivity with respect to $m$.
    \end{enumerate}
Finally, if $m(h)$ is such that $\lambda_{1,m(h)} (h) = \lambda_k (h) $, the upper bound of Lemma \ref{lem.deflimf1h} implies that $m(h)$ is uniformly bounded with respect to $h$. The min-max formula on integers with the uniform convergence on any compact ends the proof of Theorem \ref{theorem.principal}.
    \end{enumerate}
            \section{Choice of gauges on the annulus} \label{section.invariants}
When the domain is simply connected the vector potential $A$ can be chosen, via gauge invariance, equal to $\nabla^{\perp} \phi $, where $\phi$ is a solution of \eqref{eq.phi}, modulo the gradient of a regular function \cite{barbaroux:hal-01889492}.\\
In this section we slightly revisit \cite{helffer2017semi} by describing explicitly an equivalence class of magnetic potentials defined in such a way that the set of associated Pauli operators are unitarily equivalent.
        \subsection{Selection of vector potential} \label{subsection.selec_potentiel}
Recall that $A$ is fixed and satisfies \eqref{eq.rotationnel}.
By gauge invariance, we can choose a new magnetic potential $\mathbf{ \check{A} }$ such that
    \begin{equation}
    \label{eq.potentiel.rot.div.norm}
      \left\{
          \begin{array}{ll}
            \mathrm{rot}(\mathbf{ \check{A} }) = B \; \;  \text{and} \; \; \mathrm{div}({\bf \check{A}})=0 & \text{on} \; \Omega, \\
            {\bf \check{A}}\cdot \mathbf{n} =0 & \text{in} \; \partial\Omega.
        \end{array}
        \right.
    \end{equation}
In fact, consider $f$ a solution of
$$
  \left\{
      \begin{array}{ll}
        -\Delta f = \mathrm{div}(A ) & \text{on} \; \Omega \\
        \nabla f \cdot \mathbf{n} = - A \cdot \mathbf{n} & \text{in} \; \partial \Omega,
     \end{array}
    \right.
$$
Such a solution exists but is not unique, see \cite[Theorem 5.2.18]{allaire2005analyse}.\\
We can easily check that $ \mathbf{ \check{A} } = A + \nabla f $ verify \eqref{eq.potentiel.rot.div.norm}. \\
The unitary transformation to be carried out on the operator to make such a modification is the following
$$
\sigma  \cdot \autopar{{\bf p} - \mathbf{ \check{A} } } = \exp \left( i \frac{f}{h} \Id \right) \; \sigma  \cdot \autopar{{\bf p} - A} \; \exp \left( - i \frac{f}{h} \Id \right) .
$$
        \subsection{A class of admissible vectors potential} \label{subsection.class.potentiel}
Let us describe a family of vector potentials associated with $B$. To do so, we will need the following two lemmas, consequences of the Hodge de-Rham theory, whose proofs are provided in Appendix \ref{annexe1}.

    \begin{lemma} \label{lemme.fct.nulle}
Let $F \in \cC^\infty \left( \Omega, \RR^2 \right)$ a vector potential satisfying \eqref{eq.potentiel.rot.div.norm} with $B=0$ and
$$
\int_{\partial \Omega_{int}} F = 0,
$$
where $\partial \Omega_{int}$ is defined in \eqref{eq.Omega}.\\
Then, $F=0$.
    \end{lemma}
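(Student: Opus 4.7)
The plan is to show that, under these hypotheses, $F$ admits a globally-defined scalar potential $g$ on $\Omega$, after which a standard Neumann-problem argument gives $F\equiv 0$.

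First I would construct the potential. Since $\mathrm{rot}(F)=0$, the $1$-form $\omega = F_1\, dx + F_2\, dy$ is closed on $\Omega$. The annulus is homotopy equivalent to a circle, so $H^{1}_{\mathrm{dR}}(\Omega)$ is one-dimensional and a generator of $H_1(\Omega)$ can be taken to be the inner boundary $\partial\Omega_{\mathrm{int}}$. The period of $\omega$ along this generator is precisely the circulation $\int_{\partial\Omega_{\mathrm{int}}}F$, which vanishes by assumption. Hence $\omega$ is exact, and there exists $g\in\cC^\infty(\overline\Omega,\RR)$ such that $F=\nabla g$ on $\Omega$.

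Next, the remaining hypotheses translate into a homogeneous Neumann problem for $g$: $\mathrm{div}(F)=\Delta g = 0$ on $\Omega$, and $F\cdot\mathbf{n}=\partial_{\mathbf{n}}g=0$ on $\partial\Omega$. A standard integration by parts then gives
\[
0 = -\int_\Omega g\,\Delta g\, dx = \int_\Omega|\nabla g|^2\, dx - \int_{\partial\Omega}g\,\partial_{\mathbf{n}}g\, d\sigma = \int_\Omega|\nabla g|^2\, dx,
\]
so $\nabla g\equiv 0$ and therefore $F=\nabla g\equiv 0$.

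The main (and essentially only) obstacle is the first step: one really must use the circulation hypothesis to get a single-valued antiderivative on the non-simply connected annulus. This is exactly the cohomological content of $\int_{\partial\Omega_{\mathrm{int}}}F=0$, which kills the unique nontrivial period of the closed form $\omega$. As a consistency check, Stokes' theorem applied to $d\omega=\mathrm{rot}(F)\, dx\wedge dy = 0$ shows $\int_{\partial\Omega_{\mathrm{ext}}}F = \int_{\partial\Omega_{\mathrm{int}}}F = 0$, as expected since $\dim H^{1}_{\mathrm{dR}}(\Omega)=1$.
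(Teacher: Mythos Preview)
Your proof is correct and follows essentially the same route as the paper's: first use the vanishing circulation to upgrade the closed $1$-form $\omega=F_1\,dx+F_2\,dy$ to an exact one (the paper cites \cite[Corollaire 9.19]{fulton2013algebraic} where you invoke $H^1_{\mathrm{dR}}(\Omega)\cong\RR$), and then use an integration by parts exploiting $\mathrm{div}(F)=0$ and $F\cdot\mathbf{n}=0$ to conclude $\|F\|^2=\|\nabla g\|^2=0$. The only cosmetic difference is that the paper writes the final step as $\|F\|^2=\int_\Omega F\cdot\nabla g$ and integrates by parts in $F$, whereas you write it as $-\int g\,\Delta g$; these are the same identity.
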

    \begin{lemma} \label{lemme.theta}
Let $\theta$ the unique solution of
    \begin{equation*}
      \left\{
      \begin{array}{lllll}
        \Delta \theta = 0 & \text{on} \; \Omega & & & \\
        \theta = 1  & \text{in} \; \partial \Omega_{int} & \text{and} &  \theta  = 0 & \text{in} \; \partial \Omega_{ext}.
     \end{array}
    \right.
    \end{equation*}
Then, $\nabla^\perp \theta$ verifies \eqref{eq.potentiel.rot.div.norm} with $B=0$ and we have in polar coordinates, for all $(r,s) \in [\rho_1 , \rho_2 ] \times [0, 2\pi [$,
$$
\nabla^{\perp} \theta (r,s) = \frac{1}{r\; \mathrm{ln}(\rho_1 / \rho_2 )} \begin{pmatrix} -\sin(s) \\ \cos(s) \end{pmatrix} \; .
$$

Moreover
$$
\int_{\partial \Omega_{int}} \nabla^\perp \theta = \frac{2 \pi}{\ln{\left( \rho_1 / \rho_2 \right)}},
$$
with $\partial \Omega_{int}$ defined in \eqref{eq.Omega}.
    \end{lemma}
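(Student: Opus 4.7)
The plan is to exploit the radial symmetry of both the domain and the boundary data to reduce the Dirichlet problem for $\theta$ to an elementary ODE, then to check the three properties of the vector field $\nabla^\perp \theta$ directly from the PDE, and finally to compute the circulation by a parametrization of $\partial\Omega_{\rm int}$.

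First I would prove that $\theta$ is radial. Since $\Omega$, the Laplacian, and the boundary values $1$ and $0$ on the concentric circles are all invariant under the rotations $R_\alpha$ of the plane, $\theta\circ R_\alpha$ solves the same Dirichlet problem as $\theta$. By the uniqueness part of the statement (which may be quoted from the standard theory of the Dirichlet problem on a bounded domain), $\theta = \theta\circ R_\alpha$ for all $\alpha$, so $\theta$ depends only on $r$. The Laplacian then reads $\theta''(r)+\tfrac{1}{r}\theta'(r)=0$, and integrating this ODE twice gives
\begin{equation*}
\theta(r) = a\ln r + b,
\end{equation*}
so that the boundary conditions $\theta(\rho_1)=1$ and $\theta(\rho_2)=0$ force
\begin{equation*}
\theta(r) = \frac{\ln(r/\rho_2)}{\ln(\rho_1/\rho_2)}.
\end{equation*}

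Next I would compute $\nabla^\perp \theta$ in polar coordinates. Since $\theta$ depends only on $r$, $\nabla\theta = \theta'(r)\,(\cos s,\sin s)$, and consequently
\begin{equation*}
\nabla^\perp\theta(r,s) = \theta'(r) \begin{pmatrix}-\sin s \\ \cos s\end{pmatrix} = \frac{1}{r\ln(\rho_1/\rho_2)}\begin{pmatrix}-\sin s \\ \cos s\end{pmatrix},
\end{equation*}
which is exactly the expression announced. Then I would verify condition \eqref{eq.potentiel.rot.div.norm} with $B=0$: the identities $\mathrm{rot}(\nabla^\perp\theta)=\Delta\theta=0$ and $\mathrm{div}(\nabla^\perp\theta)=0$ hold on $\Omega$ by direct differentiation, and the boundary condition $\nabla^\perp\theta\cdot\mathbf{n}=0$ on $\partial\Omega$ follows at once from the fact that $\nabla\theta$ is radial, so $\nabla^\perp\theta$ is tangential to the concentric circles forming $\partial\Omega$.

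Finally, I would compute the circulation by parametrizing $\partial\Omega_{\rm int}$ as $s\mapsto(\rho_1\cos s,\rho_1\sin s)$ for $s\in[0,2\pi]$, whose tangent is $(-\rho_1\sin s,\rho_1\cos s)$. Plugging in the explicit formula for $\nabla^\perp\theta$ evaluated on $r=\rho_1$ and integrating in $s$ gives
\begin{equation*}
\int_{\partial\Omega_{\rm int}} \nabla^\perp\theta = \int_0^{2\pi}\frac{1}{\ln(\rho_1/\rho_2)}(\sin^2 s+\cos^2 s)\,ds = \frac{2\pi}{\ln(\rho_1/\rho_2)}.
\end{equation*}
The proof is essentially mechanical; the only conceptual step is the symmetrization argument guaranteeing that $\theta$ is radial, so that the PDE reduces to an ODE. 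I do not expect any genuine obstacle.
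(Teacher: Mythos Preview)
Your proposal is correct and follows essentially the same route as the paper: write $\theta(r)=\ln(r/\rho_2)/\ln(\rho_1/\rho_2)$, differentiate to get $\nabla^\perp\theta$, and parametrize $\partial\Omega_{\rm int}$ to compute the circulation. You are in fact slightly more thorough than the paper's appendix, since you justify the radiality of $\theta$ via the rotation-invariance/uniqueness argument and you explicitly verify the three conditions in \eqref{eq.potentiel.rot.div.norm}, whereas the paper simply writes down the radial formula and leaves those checks implicit.
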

We have a family of vector potentials that give rise to unitarily equivalent operators.
    \begin{notation}
We note $\mathrm{arg}(\cdot)$ the principal value of the argument that lies within the interval $[0,2\pi[$.
    \end{notation}
    \begin{proposition} \label{propo.potentiel.flux}
Recall that $\phi$ is defined in \eqref{eq.phi} and $\theta$ in Lemma \ref{lemme.theta}.\\
For all $p\in \ZZ$ and $h >0$, consider
$$
{\bf A_{h,p}} = \nabla^{\perp} \phi \; + \;  h \, \gamma_{h,p} \; \mathrm{ln} \autopar{\frac{\rho_1}{\rho_2}} \, \nabla^{\perp} \theta \, ,
$$
with $\gamma_{h,p} = p + c_0 /h$ and $c_0 =  \rho_1 \partial_r \phi \autopar{\rho_1 } -  \frac{1}{2\pi}\int_{\partial \Omega_{int}} A$.\\ 
Then, we have
$$
\lc \sigma \cdot \left( \mathbf{p} - \mathbf{A_{h,p}} \right) \rc^2 = e^{ip\, \mathrm{arg}} \, \lc \sigma \cdot \left( \mathbf{p} - \mathbf{\check{A}} \right) \rc^2 \, e^{-ip\, \mathrm{arg}}.
$$
    \end{proposition}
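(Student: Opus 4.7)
My plan is to split the claimed identity into two independent parts: a gauge-matching step identifying $\mathbf{\check{A}}$ with an explicit representative in terms of $\nabla^{\perp}\phi$ and $\nabla\,\mathrm{arg}$, and a direct Leibniz computation for the conjugation by $e^{ip\,\mathrm{arg}}$.

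For the gauge matching, I would first observe that the explicit polar-coordinate formula of Lemma \ref{lemme.theta} gives $\ln\autopar{\rho_1/\rho_2}\,\nabla^{\perp}\theta = \nabla\,\mathrm{arg}$, so that the proposed potential rewrites as $\mathbf{A_{h,p}} = \nabla^{\perp}\phi + \autopar{hp + c_0}\,\nabla\,\mathrm{arg}$. The $p$-free identity to establish is therefore $\mathbf{\check{A}} = \nabla^{\perp}\phi + c_0\,\nabla\,\mathrm{arg}$, which I would obtain by applying Lemma \ref{lemme.fct.nulle} to $F := \mathbf{\check{A}} - \nabla^{\perp}\phi - c_0\,\nabla\,\mathrm{arg}$. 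The hypotheses on $F$ are routine: the rotation cancels because $\mathrm{rot}\,\nabla^{\perp}\phi = \Delta\phi = B = \mathrm{rot}\,\mathbf{\check{A}}$ while $\mathrm{rot}\,\nabla\,\mathrm{arg} = 0$ on $\Omega$; the divergence vanishes by inspection of each term; and the normal component vanishes on $\partial\Omega$ because the Dirichlet condition on $\phi$ forces $\nabla^{\perp}\phi$ to be tangent to $\partial\Omega$, while $\nabla\,\mathrm{arg}$ is tangent to every circle centred at the origin. The remaining circulation condition $\int_{\partial\Omega_{\mathrm{int}}} F = 0$ follows by combining the gauge invariance $\int_{\partial\Omega_{\mathrm{int}}} \mathbf{\check{A}} = \int_{\partial\Omega_{\mathrm{int}}} A$ (since $\mathbf{\check{A}} - A$ is an exact 1-form), the direct polar integration giving $\int_{\partial\Omega_{\mathrm{int}}}\nabla^{\perp}\phi = 2\pi\rho_1\,\partial_r\phi\autopar{\rho_1}$ for radial $\phi$, the formula of Lemma \ref{lemme.theta}, and the very definition of $c_0$.

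For the conjugation step, the exponential $e^{\pm ip\,\mathrm{arg}}$ is a single-valued smooth unitary on $\Omega$ since $p\in\mathbb{Z}$, and a short Leibniz calculation yields $e^{ip\,\mathrm{arg}}\,\mathbf{p}\,e^{-ip\,\mathrm{arg}} = \mathbf{p} - hp\,\nabla\,\mathrm{arg}$. The Pauli matrices having constant entries, they commute with this scalar factor, hence $e^{ip\,\mathrm{arg}}\,\lc\sigma\cdot(\mathbf{p}-\mathbf{\check{A}})\rc^2 e^{-ip\,\mathrm{arg}} = \lc\sigma\cdot\autopar{\mathbf{p}-\mathbf{\check{A}}-hp\,\nabla\,\mathrm{arg}}\rc^2$; plugging in the identity $\mathbf{\check{A}} + hp\,\nabla\,\mathrm{arg} = \nabla^{\perp}\phi + (hp+c_0)\,\nabla\,\mathrm{arg} = \mathbf{A_{h,p}}$ from the previous step closes the argument. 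I expect the only real obstacle to be the careful bookkeeping of orientations and signs in the circulation computation, so that $\int_{\partial\Omega_{\mathrm{int}}} F$ vanishes exactly with the stated formula for $c_0$; every other step is a direct application of the Hodge-type Lemma \ref{lemme.fct.nulle}, the explicit computation of Lemma \ref{lemme.theta}, and the Leibniz rule.
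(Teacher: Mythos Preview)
Your proposal is correct and follows essentially the same route as the paper: both arguments first pin down $\mathbf{\check{A}}$ by applying Lemma~\ref{lemme.fct.nulle} to the difference $\mathbf{\check{A}}-\nabla^{\perp}\phi-(\text{const})\cdot\nabla^{\perp}\theta$ (checking curl, divergence, normal trace, and finally the interior circulation to fix the constant as $c_0$), and then perform the Leibniz computation for conjugation by $e^{ip\,\mathrm{arg}}$. Your only departure is the upfront identification $\ln(\rho_1/\rho_2)\,\nabla^{\perp}\theta=\nabla\,\mathrm{arg}$, which is a harmless notational streamlining of what the paper writes componentwise in polar coordinates.
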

    \begin{proof} ~~
    \begin{enumerate}
    \item Let us show that there exists $c\in \RR$ such that $\mathbf{ \check{A} }  = \nabla^{\perp} \phi +  c \nabla^{\perp} \theta$. \\
Let $\alpha \in \RR $ to be determined, consider
$$
F = \mathbf{\check{A}} - \nabla^\perp \phi - \alpha \nabla^\perp \theta .
$$
The vector field $F$ verifies \eqref{eq.potentiel.rot.div.norm}. By linearity, we have $\mathrm{rot} \left( F \right) = \mathrm{div} \left( F \right) = 0$. Moreover, since $\phi$ and $\theta$ are constant on each connected component of the boundary we have
$$
\nabla^\perp \phi \cdot \mathbf{n} = - \frac{\diff}{\diff t} \phi (\gamma(t)) = 0 \; \; \; \text{and similarly} \; \; \; \nabla^\perp \theta \cdot \mathbf{n} = 0,
$$
with $\gamma$ a parametrization of a connected component of the boundary and $\mathbf{n} = \begin{pmatrix} 
0 & 1 \\
-1 & 0 
\end{pmatrix} \gamma'$ the associated unit normal. \\
If we find $\alpha$ such that $\int_{\partial \Omega_{int}} F =0$, then by Lemma \ref{lemme.fct.nulle} we have $F=0$.\\
On the one hand we have for $(r,s) \in [\rho_1 , \rho_2] \times [0, 2 \pi [$
$$
\nabla^{\perp} \phi (r,s) = \partial_r \phi (r) \begin{pmatrix} -\sin(s) \\ \cos(s) \end{pmatrix} ,
$$
thus
$$
\int_{\partial \Omega_{int}} \nabla^\perp \phi = 2\pi \rho_1 \partial_r \phi (\rho_1) .
$$
On the other hand,
$$
\int_{\partial \Omega_{int}} \mathbf{\check{A}} = \int_{\partial \Omega_{int}} A + \nabla f = \int_{\partial \Omega_{int}} A \, .
$$
Finally, the choice
$$
\alpha = \ln{ \left( \frac{\rho_2 }{ \rho_1 } \right)}  \left( \rho_1 \partial_r \phi (\rho_1 ) - \frac{1}{2\pi}  \int_{\partial \Omega_{int}} A  \right),
$$
gives the desired result.
    \item Let us now take $p \in \ZZ$ and $h >0$. \\
For $(x,y) \in \Omega$, the function $\mathrm{exp} (ip \, \mathrm{arg}(x,y))$ is smooth. We can change the magnetic potential by conjugating with the unitary operator $\mathrm{exp} (ip \, \mathrm{arg})$.\\
Note that for all $(r,s) \in \RR_+^* \times [0, 2\pi [$
$$
-ih \nabla = -ih \,\left( \begin{pmatrix} \cos{(s)} \\ \sin{(s)} \end{pmatrix} \partial_r + \frac{1}{r} \begin{pmatrix} - \sin{(s)}  \\ \cos{(s)} \end{pmatrix} \partial_s  \right) \; \; \text{and} \; \; e^{ ip \, \mathrm{arg}(x,y)} = e^{ips} \, .
$$
For $(x,y) \in \Omega$, we have, in polar coordinates, for all $(r,s) \in [\rho_1 , \rho_2 ] \times [0, 2\pi [$,
$$
-ih \nabla  \, e^{ ip \, \mathrm{arg}(x,y)} = \frac{hp}{r} \begin{pmatrix} - \sin{(s)}  \\ \cos{(s)} \end{pmatrix} e^{ ip s}  = hp \, \ln{\left(\frac{\rho_1}{\rho_2} \right)} \nabla^\perp \theta \, e^{ ip \, \mathrm{arg}(x,y)} ,
$$
where we used Lemma \ref{lemme.theta}.\\
Finally, we have 
$$
\lc \sigma \cdot \left( \mathbf{p} - \mathbf{A_{h,p}} \right) \rc^2 = e^{ip\, \mathrm{arg}} \, \lc \sigma \cdot \left( \mathbf{p} - \mathbf{\check{A}} \right) \rc^2 \, e^{-ip\, \mathrm{arg}}
$$
with
$$
{\bf A_{h,p}} = {\bf \check{A}} \, + \, ih \, e^{ip \mathrm{arg}} \,  \nabla e^{-ip \mathrm{arg}} = {\bf \check{A}} \, + \, hp \, \ln{\left(\frac{\rho_1}{\rho_2} \right)} \nabla^\perp \theta.
$$
    \end{enumerate}
    \end{proof}
    \begin{remark} Taking $p=0$, in Proposition \ref{propo.potentiel.flux}, we see that $\mathbf{\check{A}} = \mathbf{A_{h,0}}$.\\ Thus, we have
$$
{\bf \check{A}} = \nabla^{\perp} \phi \; + \; c_0 \; \mathrm{ln} \autopar{\frac{\rho_1}{\rho_2}} \, \nabla^{\perp} \theta \, ,
$$
with $c_0 =  \rho_1 \partial_r \phi \autopar{\rho_1 } -  \frac{1}{2\pi}\int_{\partial \Omega_{int}} A$.
    \end{remark}
    \begin{remark}
The unit operator $\mathrm{exp} (ip\, \mathrm{arg})$ of Proposition \ref{propo.potentiel.flux} is an explicit version of the one given in \cite[Proposition 2.1.3]{fournais2010spectral}. By Lemma \ref{lemme.theta}, it is easy to see that the composition by $\mathrm{exp} (ip\, \mathrm{arg})$ modifies the circulations of the magnetic potential, \ie
$$
\int_{\partial \Omega_{int}} \mathbf{A_{h,p}} = \int_{\partial \Omega_{int}} \mathbf{\check{A}} + 2\pi h p.
$$
    \end{remark}
            \section{Fibration of the Dirichlet-Pauli operator} \label{subsection.fibration_pauli}
In this section, we decompose the Dirichlet-Pauli operator, with potential $\mathbf{A_{h,p}}$, into Fourier series. \\
Under the assumption that the magnetic field is radial, the Dirichlet-Pauli operator in polar coordinates, denoted by $\widetilde{\cP_h}$, acting on $L^2 \left( ]\rho_1, \rho_2 [\times [0,2\pi[,\CC^2 ; r \diff r \right)$ as
    \begin{equation} \label{eq.Pauli.polaire.avec_r}
    \widetilde{\cP_h} = \lc -h^2 \autopar{\partial_{rr}^2 \; + \; \frac{1}{r} \partial_r } \; + \; \autopar{ h \;  \frac{\autopar{-i\partial_s - \gamma_{h,p} } }{r} - \partial_r \phi(r) }^2 \rc I_2 \; - \; hB(r) \sigma_3 ,
    \end{equation}
with $\gamma_{h,p}$ defined in Proposition \ref{propo.potentiel.flux}. \\
Details are given in Appendix \ref{annexe_decompo}.

\smallskip

Thanks to the change of function $u (r) = \sqrt{r} \; v(r)$, we get a new operator acting now on $L^2 \autopar{ ]\rho_1, \rho_2 [ \times [0,2\pi[,\CC^2; \diff r \diff s}$ as
    \begin{equation} \label{eq.Pauli.polaire.sans_r}
    \widehat{\cP_h} = \lc -h^2 \autopar{\partial_{rr}^2 \; + \; \frac{1}{4r^2}} \; + \; \autopar{ h \;  \frac{\autopar{-i\partial_s - \gamma_{h,p} } }{r} - \partial_r \phi(r) }^2 \rc I_2 \; - \; hB(r) \sigma_3 .
    \end{equation}

\medskip

Consider $\cF$, the Fourier isomorphism between $L^2 \autopar{ ]\rho_1, \rho_2 [ \times [0,2\pi[,\CC^2; \diff r \diff s}$ and \\$\ell^2 \left( L^2 \left( ]\rho_1, \rho_2 [,\CC^2; \diff r \right) \right)$. Equation \eqref{eq.Pauli.polaire.sans_r} ensures that $\widehat{\cP_h}$ and $\cF$ commute. We have the following diagram 
    \begin{equation}
       \xymatrix{\relax
         H_{0}^1  \cap H^2 \autopar{]\rho_1, \rho_2 [\times [0,2\pi[,\CC^2; \diff r \diff s} \ar[r]^-{\cF} \ar[d]_-{\widehat{\cP_{h}}}  & \ell^2 \autopar{H_{0}^1  \cap H^2 \autopar{ ]\rho_1, \rho_2 [,\CC^2; \diff r}} \ar[d]^-{\bigoplus_{m\in \ZZ} \cP_{h,m} }   \\
         L^2 \autopar{]\rho_1, \rho_2 [\times [0,2\pi[,\CC^2; \diff r \diff s} \ar[r]_-{\cF}  & \ell^2 \autopar{L^2 \autopar{ ]\rho_1, \rho_2 [,\CC^2; \diff r}} 
      } \, ,
    \end{equation}
where the operator $\cP_{h,m}$ is defined as follows. 
    \begin{definition} \label{def.Pauli.moment.fix}
Let $h \in ]0,1]$ and $m,p \in \ZZ$.
We define the operator $\left( \cP_{h,m} , \mathrm{Dom} \left( \cP_{h,m} \right) \right)$ on $L^2 \autopar{[\rho_1, \rho_2], \CC^2}$ as the operator acting as
    \begin{equation} \label{eq.Phm.def.1}
    \cP_{h,m} = \lc -h^2 \autopar{\partial_{rr}^2 \; + \; \frac{1}{4r^2}} \; + \; \autopar{ h \;  \frac{\autopar{m - \gamma_{h,p} } }{r} - \partial_r \phi(r) }^2 \rc I_2 \; - \; hB(r) \sigma_3 \;  ,   
    \end{equation}
with $\gamma_{h,p}$ defined in Proposition \ref{propo.potentiel.flux}. \\
Moreover, $\mathrm{Dom} \left( \cP_{h,m} \right) = H_{0}^1  \cap H^2 \autopar{ [\rho_1, \rho_2 ],\CC^2}$. We also note $\cL^{-}_{h,m}$ and $\cL^{+}_{h,m} $, the operators given by 
$$
\cP_{h,m} =\begin{pmatrix}
\cL^{-}_{h,m} & 0 \\
0 & \cL^{+}_{h,m}
\end{pmatrix} \,.
$$
    \end{definition}
We give in the next lemma the connection between the spectrum of $\cP_h$ and that of $\cP_{h,m}$ for $m\in \ZZ$. A proof of this result is given in Appendix \ref{annexe_decompo}.
    \begin{lemma} \label{lemme.union.sp}
Under the assumptions made in Section \ref{section.introduction}, we have for all $h \in ]0,1]$ : 
\begin{equation}
\mathrm{Sp} \autopar{ \cP_h } = \displaystyle{\bigcup_{m\in \ZZ} \mathrm{Sp} \autopar{\cP_{h,m}}},    
\end{equation}
with $\cP_{h,m}$ given in Definition \ref{def.Pauli.moment.fix}.\\
Moreover, if $v_m$ is eigenfunction of $\cP_{h,m}$ then $\autopar{r,s} \mapsto \sqrt{r} \; v_m (r) e^{im s}$ is an eigenfunction of $\widetilde{\cP_h }$ associated with the same eigenvalue, with $\widetilde{\cP_h}$ given in \eqref{eq.Pauli.polaire.avec_r}.
    \end{lemma}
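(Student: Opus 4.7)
The plan is to show that $\cP_h$ is unitarily equivalent to the direct sum $\bigoplus_{m\in\ZZ}\cP_{h,m}$, from which the spectral identity and the lifting of eigenfunctions both follow by standard direct-sum theory. The unitary equivalence is obtained by composing three elementary maps: (i) the polar change of variables $U_1\colon L^2(\Omega,\CC^2)\to L^2(]\rho_1,\rho_2[\times[0,2\pi[,\CC^2;r\,\diff r\,\diff s)$, which, because $B$ and $\mathbf{A_{h,p}}$ are radial, intertwines $\cP_h$ with $\widetilde{\cP_h}$ of \eqref{eq.Pauli.polaire.avec_r}; (ii) the multiplication $U_2\colon v\mapsto\sqrt{r}\,v$, which is unitary from the weighted space $L^2(r\,\diff r\,\diff s)$ to $L^2(\diff r\,\diff s)$ and conjugates $\widetilde{\cP_h}$ into $\widehat{\cP_h}$ of \eqref{eq.Pauli.polaire.sans_r} via the standard radial identity turning $-h^2\lp\partial_{rr}+\tfrac{1}{r}\partial_r\rp$ into $-h^2\lp\partial_{rr}+\tfrac{1}{4r^2}\rp$; and (iii) the Fourier series isomorphism $\cF$ in the angular variable $s$.

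The key observation enabling the last step is that, in \eqref{eq.Pauli.polaire.sans_r}, every coefficient of $\widehat{\cP_h}$ depends only on $r$ and the angular variable enters solely through the symbol $-i\partial_s$; hence $\widehat{\cP_h}$ commutes with $-i\partial_s$, and $\cF$ diagonalises it into $\bigoplus_{m\in\ZZ}\cP_{h,m}$, the operator $\cP_{h,m}$ being obtained from $\widehat{\cP_h}$ by substituting the integer $m$ for $-i\partial_s$. Composing these three unitaries into $U=\cF\circ U_2\circ U_1$ yields $U\cP_h U^{-1}=\bigoplus_{m\in\ZZ}\cP_{h,m}$. The spectral identity then follows from the general fact $\mathrm{Sp}\lp\bigoplus_m A_m\rp=\overline{\bigcup_m\mathrm{Sp}(A_m)}$ for self-adjoint operators; since $\cP_h$ has compact resolvent its spectrum is discrete, so the closure is superfluous. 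For the eigenfunction statement one chases: $\cP_{h,m}v_m=\lambda v_m$ produces, by construction of $\cF$, an eigenfunction $v_m(r)e^{ims}$ of $\widehat{\cP_h}$, and applying $U_2^{-1}$ maps it back to an eigenfunction of $\widetilde{\cP_h}$ at the same eigenvalue.

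I expect the only real technical issue to be the domain bookkeeping: verifying that $U_1$ and $U_2$ map $\mathrm{H}^2\cap\mathrm{H}^1_0$ of the two-dimensional problem onto $\mathrm{H}^2\cap\mathrm{H}^1_0$ of the radial problem (the factor $1/\sqrt{r}$ coming from $U_2^{-1}$ is harmless since $r$ stays in the compact interval $[\rho_1,\rho_2]$ bounded away from $0$), and that $\cF$ realises an isomorphism between the Sobolev space on the cylinder and the $\ell^2$ space of $\mathrm{H}^2\cap\mathrm{H}^1_0$-valued sequences with the appropriate $m$-dependent graph norm. Both checks are routine because the angular torus is compact and the radial interval is a bounded smooth domain; the Dirichlet condition at $r=\rho_1,\rho_2$ descends componentwise to $v_m(\rho_1)=v_m(\rho_2)=0$ for every $m$.
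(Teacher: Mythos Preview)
Your proposal is correct and follows essentially the same route as the paper: pass to polar coordinates, conjugate by $r^{1/2}$ to flatten the radial measure, then Fourier-decompose in the angular variable. The paper's appendix proof packages the last step as a hands-on double inclusion at the eigenvalue level (if $\lambda\in\mathrm{Sp}(\cP_{h,m})$ then $v_m(r)e^{ims}$ is an eigenfunction of $\widehat{\cP_h}$; conversely, Fourier-expand an eigenfunction of $\widehat{\cP_h}$ and pick out a nonzero coefficient), whereas you invoke the abstract identity $\mathrm{Sp}\bigl(\bigoplus_m A_m\bigr)=\overline{\bigcup_m\mathrm{Sp}(A_m)}$ together with discreteness of the spectrum; these are the same argument in different dress.
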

    \begin{remark} \label{remark.union.sp}
In the same way as in Lemma \ref{lemme.union.sp}, we have
$$
\mathrm{Sp} \left( \cL_{h}^- \right) =  \displaystyle{\bigcup_{m\in \ZZ} \mathrm{Sp} \left( \cL_{h,m}^- \right) } \; \; \text{and} \; \; \mathrm{Sp} \left( \cL_h^+ \right) = \displaystyle{\bigcup_{m\in \ZZ} \mathrm{Sp} \left( \cL_{h,m}^+ \right) } .
$$
    \end{remark}
        \subsection{Extension of the definition of known operators} 
The definition of $\cP_{h,m}$, see Definition \ref{def.Pauli.moment.fix}, naturally leads us to consider real moments (and not only integer moments). Indeed, in \eqref{eq.Phm.def.1}, the factor $m - \gamma_{h,p}$ is no more an integer. \\
Section \ref{subsection.op_moments_reels} is devoted to the study of some remarkable operators with real angular momentum.
    \subsubsection{Real angular momentum operators} \label{subsection.op_moments_reels}
All the results of this section will be demonstrated in the case where the magnetic potential is equal to $\textbf{A}_0 = \nabla^\perp \phi$. We will see in the next subsection that it is sufficient when considering real moments.

\medskip

Let us define the magnetic gradient, magnetic Laplacian and magnetic Dirac operators appearing during the manipulation of the Pauli operator. This definition allows us to have a better overview of the operator $\cP_{h,m}$.
    \begin{remark} \label{remarque.prelude.moment.reels}
Let $h \in ]0,1]$ and $m \in \ZZ$. We will show that by defining $\cP_{h,m}$ with the potential $\mathbf{A_0}$, we have 
    \begin{equation*}
    \cL_{h,m}^{-} = d_{h,m} d_{h,m}^\times \, , \; \; \; \cL_{h,m+1}^{+} = d_{h,m}^\times d_{h,m} ,
    \end{equation*}
where $d_{h,m}$ given in Definition \ref{definition.extension.op}.
    \end{remark}
    \begin{definition} \label{definition.extension.op}
Let $m \in \mathbb{R}$ and $h>0$.\\
We define $\autopar{\mathbf{p}_{h,m} , \, \mathrm{Dom} \autopar{\mathbf{p}_{h,m}}}, \; \autopar{d_{h,m} , \, \mathrm{Dom} \autopar{d_{h,m}}} , \; \autopar{\mathscr{M}_{h,m} , \, \mathrm{Dom} \autopar{\mathscr{M}_{h,m}}}$ the operators on $L^2 \autopar{[\rho_1, \rho_2], \CC}$ acting as 
    \begin{equation*}
        \begin{split}
        \textbf{p}_{h,m}  &= -ihe_r\left(
            \partial_r-\frac{1}{2r}
        \right)
        +
        e_s\left(
        \frac{hm}{r}-\partial_r\phi
        \right)\,,
\\
        d_{h,m} &= -ih \autopar{\partial_r + \frac{m + 1/2}{r} - \frac{\partial_r\phi}{h} }\,,
\\
        \mathscr{M}_{h,m}
        &=
        \textbf{p}_{h,m}^\times\cdot\textbf{p}_{h,m}
        =
        -h^2\left(
            \partial_r^2+\frac{1}{4r^2}
        \right)
        +
        \left(
        \frac{hm}{r}-\partial_r\phi
        \right)^2\,,
        \end{split}
    \end{equation*}
where the family of vectors $(e_r, e_s)$ constitutes a direct orthonormal basis of $\mathbb{R}^2$ and $\textbf{p}_{h,m}^\times$ is the formal adjoint of $\textbf{p}_{h,m}$.\\
Moreover, 
$$
\mathrm{Dom} \autopar{d_{h,m}} =\mathrm{Dom} \autopar{\textbf{p}_{h,m}}  = H^1_0 \autopar{[\rho_1, \rho_2], \CC} \; \; \text{and} \; \; \; \mathrm{Dom} \autopar{\mathscr{M}_{h,m}} = H^1_0 \cap H^2 \autopar{[\rho_1, \rho_2], \CC}.
$$
    \end{definition}
The operators $\textbf{p}_{h,m},\, d_{h,m},\, \mathscr{M}_{h,m}$ are actually related to each other as one can see in the following.
    \begin{proposition} \label{proposition.extension.operateur}
Let $\textbf{A}_0 = \nabla^\perp \phi$ where $\phi$ is the unique solution of the Poisson's equation \eqref{eq.phi}.

For $m\in\mathbb{R}$ and $h>0$, we have
    \begin{equation} \label{eq.relation.dhm-Mhm}
        \begin{split}
        d^\times_{h,m}d_{h,m} &= \mathscr{M}_{h,m+1}+hB\,,
\\
        d_{h,m}d_{h,m}^\times &= \mathscr{M}_{h,m}-hB\,,
        \end{split}
    \end{equation}
where $d^\times_{h,m}$ is the formal adjoint of $d_{h,m}$ (in the distribution meaning).

When $m$ is an integer, the operators $\textbf{p}_{h,m}$ and $d_{h,m}$ acting on the radial functions verify
    \begin{equation}\label{eq.magnetic_grad_laplacien}
        \begin{split}
        \textbf{p}_{h,m}  &= e^{-ims}r^{1/2}\left(\textbf{p}-\textbf{A}_0\right)r^{-1/2}e^{ims}\,,
\\
        \mathscr{M}_{h,m}
        &=
        e^{-ims}r^{1/2}\left|\mathbf{p}-\mathbf{A_0}\right|^2r^{-1/2}e^{ims}\,,
        \end{split}
    \end{equation}
and
    \begin{equation} \label{eq.extension_op-dirac}
        \begin{split}
        \begin{pmatrix}
        0&d_{h,m}\\d^\times_{h,m}&0
        \end{pmatrix}
        &=
        \begin{pmatrix}
        e^{-ims}&0\\0&e^{-i(m+1)s}
        \end{pmatrix}r^{1/2}\sigma\cdot(\mathbf{p}-\mathbf{A_0})r^{-1/2}\begin{pmatrix}
        e^{ims}&0\\0&e^{i(m+1)s}
        \end{pmatrix}\,,
        \end{split}
    \end{equation}
where  $\textbf{p} = -ih\nabla = -ihe_r\partial_r -ihe_s\frac{\partial_s}{r}$ is the momentum operator in polar coordinates $(e_r(s) = (\cos(s),\sin(s))$, $e_s(s) = (-\sin(s),\cos(s)))$, $\left|\mathbf{p}-\mathbf{A_0}\right|^2$ is the magnetic Laplacian. 
    \end{proposition}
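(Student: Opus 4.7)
The plan is to verify the three claimed identities by direct computation in polar coordinates, exploiting the radial structure of $\phi$ and the Poisson equation $\Delta\phi=B$.

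I first compute the formal adjoint $d_{h,m}^\times$ on $L^2([\rho_1,\rho_2],\mathbb{C};\diff r)$ by integration by parts. Since $-ih\partial_r$ is formally self-adjoint while multiplication by the real function $a(r):=(m+1/2)/r-\partial_r\phi/h$ is multiplied by $-ih$ (whose complex conjugate has the opposite sign), I obtain $d_{h,m}^\times = -ih(\partial_r-a)$. The pair of algebraic identities $(\partial_r\mp a)(\partial_r\pm a)=\partial_r^2\pm a'-a^2$, multiplied by $-h^2$, yields explicit second-order expressions for $d_{h,m}^\times d_{h,m}$ and $d_{h,m} d_{h,m}^\times$. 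To match these with $\mathscr{M}_{h,m+1}+hB$ and $\mathscr{M}_{h,m}-hB$ respectively, I invoke the radial Poisson equation $\phi''+\phi'/r=B$ (see Remark~\ref{remark.phimin}) to substitute for $\partial_r^2\phi$, together with the algebraic coincidences $(m+1/2)(m+3/2)=(m+1)^2-1/4$ and $2(m+1/2)-1=2m$ needed to match the coefficients of $1/r^2$ and of $\partial_r\phi/r$ respectively.

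For the conjugation formulas~\eqref{eq.magnetic_grad_laplacien}, I write $\mathbf{A_0}=\nabla^\perp\phi=\partial_r\phi\,e_s$ in polar coordinates and apply $r^{1/2}(\mathbf{p}-\mathbf{A_0})r^{-1/2}$ to a function of the form $u(r)e^{ims}$. Conjugation by $r^{-1/2}$ produces the shift $-1/(2r)$ from $\partial_r(r^{-1/2})=-\tfrac12 r^{-3/2}$, while conjugation by $e^{ims}$ turns $-ih\partial_s/r$ into $hm/r$; multiplying by $e^{-ims}$ on the left recovers $\mathbf{p}_{h,m}u$ exactly. The identity for $\mathscr{M}_{h,m}$ follows by inserting $r^{-1/2}e^{ims}\cdot e^{-ims}r^{1/2}=\Id$ between the two factors in $|\mathbf{p}-\mathbf{A_0}|^2$, using that $e^{-ims}r^{1/2}$ restricted to the $m$-th Fourier mode is compatible with taking formal adjoints. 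For the Pauli/Dirac identity~\eqref{eq.extension_op-dirac}, I observe that $\sigma\cdot e_r$ and $\sigma\cdot e_s$ are off-diagonal with respective off-diagonal entries $e^{\mp is}$ and $\mp i\,e^{\mp is}$. These extra phases shift the Fourier mode by $\pm 1$ between the two spinor components, which is precisely what the chirality-shifted conjugation by $\mathrm{diag}(e^{-ims},e^{-i(m+1)s})$ on the left and $\mathrm{diag}(e^{ims},e^{i(m+1)s})$ on the right is designed to absorb; a computation parallel to the one above then produces $d_{h,m}$ and $d_{h,m}^\times$ in the two off-diagonal blocks.

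The main obstacle is not conceptual but bookkeeping: carefully tracking the signs and half-integer shifts when combining $r^{\pm 1/2}$ with $e^{\pm ims}$ and the hidden $e^{\pm is}$ inside $\sigma\cdot e_r$ and $\sigma\cdot e_s$, and checking that the algebraic matching in Step~2 closes exactly because $\Delta\phi=B$. Since every quantity is a smooth differential operator on a fixed bounded domain, no analytic tool beyond integration by parts and the chain rule is required.
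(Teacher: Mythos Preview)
Your proposal is correct and follows essentially the same route as the paper: direct computation of the products $d_{h,m}^\times d_{h,m}$ and $d_{h,m}d_{h,m}^\times$ via $(\partial_r\mp a)(\partial_r\pm a)$ combined with the radial Poisson equation $\phi''+\phi'/r=B$, and explicit evaluation of the conjugations in polar coordinates using $\mathbf{A_0}=\partial_r\phi\,e_s$ and the off-diagonal structure of $\sigma\cdot e_r,\sigma\cdot e_s$. The only minor difference is that for $\mathscr{M}_{h,m}$ you argue by inserting the identity and passing to adjoints, whereas the paper expands $|\mathbf{p}-\mathbf{A_0}|^2=-h^2\Delta+2ih\,\mathbf{A_0}\cdot\nabla+|\mathbf{A_0}|^2$ and conjugates each term separately; your shortcut is fine provided you note that the $r^{1/2}$ conjugation intertwines adjoints on $L^2(r\,\diff r\,\diff s)$ and $L^2(\diff r\,\diff s)$.
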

    \begin{proof} Let $u\in \cC^{\infty}_c \autopar{\RR , \CC}$ and $v=(v_1,v_2)\in \cC^{\infty}_c \autopar{\RR , \CC^2}$.
\begin{enumerate}[label = (\roman*)]
\item Let us start with \eqref{eq.relation.dhm-Mhm}. We have
    \begin{align*}
d^\times_{h,m}d_{h,m}&\,u = - h^2  \autopar{\partial_r - \frac{m + 1/2}{r} + \frac{\partial_r \phi}{h}}\autopar{\partial_r + \frac{m + 1/2}{r} - \frac{\partial_r \phi}{h}}\,u \\
& = -h^2 \autopar{\partial_{r}^2 - \frac{m + 1/2}{r^2} - \frac{\partial^{2}_r \phi}{h} - \autopar{\frac{m+1/2}{r}}^2 + 2\frac{m+1/2}{r}\frac{\partial_r \phi}{h} - \autopar{\frac{\partial_r \phi}{h}}^2}u \,,
    \end{align*}
and then
    \begin{equation*}
        \begin{split}
        d^\times_{h,m}d_{h,m}\,u & =-h^2 \left( \partial^{2}_r - \frac{(m+1)-1/2}{r^2}- \frac{\partial^{2}_r \phi}{h} - \autopar{\frac{(m+1)-1/2}{r}}^2 \right. \\
         &  \left. + 2\frac{(m+1)-1/2}{r}\frac{\partial_r \phi}{h} - \autopar{\frac{\partial_r \phi}{h}}^2 \right) u \\
         & = \mathscr{M}_{h,m+1}u + h \autopar{\partial_r^2 \phi + \frac{1}{r}\partial_r \phi} u \\
         & = \autopar{ \mathscr{M}_{h,m+1} + hB } u \,.
        \end{split}
    \end{equation*}
In the same way,
    \begin{equation*}
        \begin{split}
        d_{h,m}d^\times_{h,m}&\,u = - h^2  \autopar{\partial_r + \frac{m + 1/2}{r} - \frac{\partial_r \phi}{h}}\autopar{\partial_r - \frac{m + 1/2}{r} + \frac{\partial_r \phi}{h}}\,u \\
        & = -h^2 \autopar{\partial_{r}^2 + \frac{m + 1/2}{r^2} + \frac{\partial^{2}_r \phi}{h} - \autopar{\frac{m+1/2}{r}}^2 + 2\frac{m+1/2}{r}\frac{\partial_r \phi}{h} - \autopar{\frac{\partial_r \phi}{h}}^2}u \\
         & = \mathscr{M}_{h,m}u - h \autopar{\partial_r^2 \phi + \frac{1}{r}\partial_r \phi} u \\
         & = \autopar{ \mathscr{M}_{h,m} - hB } u \,.
        \end{split}
    \end{equation*}
    \item Let us then establish \eqref{eq.magnetic_grad_laplacien}. On the one hand, we have
    \begin{equation} \label{eq.conjugaison.jacobien}
    \left\{
        \begin{aligned} 
        &r^{1/2} \partial_r r^{-1/2} u = \partial_r u -\frac{1}{2r} u \,,
\\
        &r^{1/2} \partial^{2}_r r^{-1/2} u = \partial^{2}_r u -\frac{1}{r} \partial_r u + \frac{3}{4r^2} u \,,
        \end{aligned} 
    \right.
    \end{equation}

on the other hand, we deduce from the radialness of $\phi$ that
    \begin{equation} \label{eq.potentiel_magnetique}
A_0 = \nabla^\perp \phi = \autopar{e_r\partial_r + e_s\frac{\partial_s}{r} }^\perp \phi = e_s\partial_r \phi\,.
    \end{equation}
The magnetic gradient becomes
    \begin{equation*}
        \begin{split}
        e^{-ims}r^{1/2}\left(\textbf{p}-\textbf{A}_0\right)r^{-1/2}e^{ims}v &= e^{-ims}r^{1/2}\left(-ih\autopar{e_r \partial_r + e_s\frac{\partial_s}{r}}-e_s\partial_r\phi\right)r^{-1/2}e^{ims}v 
\\
        & =-ihe_r  r^{1/2}\partial_r r^{-1/2} v + e_s\autopar{\frac{hm}{r}-\partial_r\phi}v 
\\
        & = \textbf{p}_{h,m} v \,.
        \end{split}
    \end{equation*}
Then, the magnetic Laplacian satisfies
$$
\left|\mathbf{p}-\mathbf{A_0}\right|^2 u = - h^2 \Delta \; u + 2ih A_0 \cdot \nabla \; u+ \valabs{A_0}^2 u \,.
$$
In polar coordinates, we have
        \[\begin{split}
        & r^{1/2} \Delta \; r^{-1/2} u = \autopar{\partial^2_r + \frac{1}{4r^2} + \frac{\partial^2_s}{r^2}} u \,,
\\
        & r^{1/2} A_0 \cdot \nabla \; r^{-1/2} u = \begin{pmatrix} 0 \\ \partial_r \phi \end{pmatrix} \cdot \begin{pmatrix} \partial_r - \frac{1}{2r} \\ \frac{\partial_s}{r} \end{pmatrix} u =\partial_r \phi \frac{\partial_s}{r} u \, ,
\\
        & r^{1/2} \valabs{ A_0 }^2 r^{-1/2} u = \autopar{\partial_r \phi}^2 u \, .
        \end{split}\]
Thus,
    \begin{equation*}
        \begin{split}
        e^{-ims}r^{1/2}\left|\mathbf{p}-\mathbf{A_0}\right|^2r^{-1/2}e^{ims} u &= -h^2  \autopar{\partial^2_r + \frac{1}{4r^2} - \frac{m^2}{r^2}} u - 2 \frac{hm}{r} \partial_r \phi u + \autopar{\partial_r \phi}^2 u\\
        & = \mathscr{M}_{h,m} u \, .
        \end{split}
    \end{equation*}
    \item Let us finish with \eqref{eq.extension_op-dirac}. Using \eqref{eq.potentiel_magnetique} and the properties of Pauli matrices, we have
    \begin{equation*}
        \begin{split}
        \sigma\cdot(\mathbf{p}-\mathbf{A_0}) & = \sigma \cdot \autopar{-ihe_r \partial_r + e_s \lb \frac{-ih \partial_s}{r} - \partial_r \phi \rb} 
\\
        & = \sigma \cdot e_r  \autopar{ -ih \partial_r \Id_2 + i \sigma_3  \lb \frac{-ih \partial_s}{r} - \partial_r \phi  \rb }
\\
        & = \begin{pmatrix}
        0& e^{-is}\\e^{is}&0
        \end{pmatrix} \begin{pmatrix}
        -ih \partial_r + i \lb \frac{-ih\partial_s}{r} - \partial_r \phi \rb & 0 \\ 0 & -ih \partial_r - i \lb \frac{-ih\partial_s}{r} - \partial_r \phi \rb
        \end{pmatrix}
\\
        & = \begin{pmatrix}
        0 & -ihe^{-is} \autopar{\partial_r + \frac{-i\partial_s}{r} - \frac{\partial_r \phi}{h}  } \\ -ih e^{is} \autopar{ \partial_r - \frac{-i\partial_s}{r} + \frac{\partial_r \phi}{h}  } & 0
        \end{pmatrix} \, .
        \end{split}
    \end{equation*}
From Equation \eqref{eq.conjugaison.jacobien}, we have
    \begin{equation*}
      \begin{split}
        r^{1/2}&\sigma \cdot(\mathbf{p}-\mathbf{A_0}) r^{-1/2} \begin{pmatrix}
        e^{ims} & 0\\ 0& e^{i(m+1)s}
        \end{pmatrix} \begin{pmatrix} 
        v_1  \\ 
        v_2 
        \end{pmatrix}  \\
        & = \begin{pmatrix}
        0 & -ihe^{-is} \autopar{\partial_r + \frac{-i\partial_s - 1/2}{r} - \frac{\partial_r \phi}{h}  } \\ -ih e^{is} \autopar{ \partial_r - \frac{-i\partial_s +1/2}{r} + \frac{\partial_r \phi}{h}  } & 0
        \end{pmatrix} \begin{pmatrix} 
        v_1 e^{ims} \\ 
        v_2 e^{i(m+1)s} 
        \end{pmatrix} 
\\
        & = \begin{pmatrix}
        e^{ims} & 0\\ 0& e^{i(m+1)s}
        \end{pmatrix} \begin{pmatrix}
        0&d_{h,m}\\d^\times_{h,m}&0
        \end{pmatrix} \begin{pmatrix} 
        v_1  \\ 
        v_2 
        \end{pmatrix} \,.
        \end{split}
    \end{equation*}
    \end{enumerate}
\end{proof}
The following lemma will be important when studing the adjoint of $d_{h,m}$ and to determine elliptic estimates. 
    \begin{lemma} \label{lemme.calcul_norme2_d_h,m}
Let $h\in]0,1]$, $m\in\mathbb{R}$ and $u\in H^1_0 \autopar{[\rho_1, \rho_2], \CC}$. We have
\[
        \begin{split}
        \left\|d_{h,m}u\right\|^2
        &=
        h^2\left\|\left(\partial_r-\frac{1}{2r}\right)u\right\|^2
        +
        \left\|
        \left(\frac{h(m+1)}{r}-\partial_r\phi\right)u
        \right\|^2
        +
        h\left\|
        \sqrt{B}u
        \right\|^2
\\
        \left\|d^\times_{h,(m+1)}u\right\|^2
        &=
        h^2\left\|\left(\partial_r-\frac{1}{2r}\right)u\right\|^2
        +
        \left\|
        \left(\frac{h(m+1)}{r}-\partial_r\phi\right)u
        \right\|^2
        -
        h\left\|
        \sqrt{B}u
        \right\|^2 \, ,
        \end{split}
    \]
    \end{lemma}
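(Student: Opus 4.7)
The plan is to bootstrap from the factorization identities \eqref{eq.relation.dhm-Mhm} already established in Proposition \ref{proposition.extension.operateur}, together with the explicit representation of the magnetic Laplacian $\mathscr{M}_{h,m+1}$ in Definition \ref{definition.extension.op}. First, since $u\in H^1_0([\rho_1,\rho_2],\CC)$, the vanishing boundary condition permits a single integration by parts that transfers one factor of $d_{h,m}$ (respectively $d_{h,m+1}^\times$) onto the other, yielding the quadratic-form identities
\[
\|d_{h,m}u\|^2 = \langle d_{h,m}^\times d_{h,m}u, u\rangle, \qquad \|d_{h,m+1}^\times u\|^2 = \langle d_{h,m+1}d_{h,m+1}^\times u, u\rangle.
\]
These are to be understood as equalities of continuous sesquilinear forms on $H^1_0$; if a cleaner formal footing is desired, one can first verify them for $u\in\cC_c^\infty(\rho_1,\rho_2)$, where $d_{h,m}^\times d_{h,m}u$ is pointwise defined, and then extend by density.

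Next, I would apply \eqref{eq.relation.dhm-Mhm} to substitute
\[
\langle d_{h,m}^\times d_{h,m}u, u\rangle = \langle \mathscr{M}_{h,m+1}u, u\rangle + h\|\sqrt{B}u\|^2, \qquad \langle d_{h,m+1}d_{h,m+1}^\times u, u\rangle = \langle \mathscr{M}_{h,m+1}u, u\rangle - h\|\sqrt{B}u\|^2,
\]
which already isolates the $\pm h\|\sqrt{B}u\|^2$ contributions in the correct signs. Finally, I would unfold $\langle \mathscr{M}_{h,m+1}u, u\rangle = \|\mathbf{p}_{h,m+1}u\|^2$. Since, by Definition \ref{definition.extension.op},
\[
\mathbf{p}_{h,m+1}u = -ihe_r\left(\partial_r-\tfrac{1}{2r}\right)u + e_s\left(\tfrac{h(m+1)}{r}-\partial_r\phi\right)u,
\]
and since $(e_r, e_s)$ is an orthonormal pair in $\RR^2$, the $L^2(\CC^2)$ squared norm splits as the sum of the squared scalar norms of the two components, producing precisely the two remaining terms of the claimed identities.

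I do not expect any conceptual obstacle. The one mild technical point is the justification of the quadratic-form rewriting in Step 1 given only $H^1_0$ regularity of $u$ (rather than $H^2$). The most direct route is to expand $|d_{h,m}u|^2$ pointwise: with $b(r) = \partial_r\phi(r) - h\tfrac{m+1/2}{r}$ one gets $|d_{h,m}u|^2 = h^2|\partial_r u|^2 - hb\,\partial_r|u|^2 + b^2|u|^2$, so that a single integration by parts on the cross term, whose boundary contribution vanishes because $u(\rho_1)=u(\rho_2)=0$, suffices. Substituting $\partial_r b = \partial_r^2\phi + h\tfrac{m+1/2}{r^2} = B - \tfrac{\partial_r\phi}{r} + h\tfrac{m+1/2}{r^2}$ via the radial Poisson equation of Remark \ref{remark.phimin} and re-arranging reconstructs $h^2\|(\partial_r-\tfrac{1}{2r})u\|^2$ and $\|(\tfrac{h(m+1)}{r}-\partial_r\phi)u\|^2$ up to cancellation of the $\tfrac{1}{r^2}$ and $\tfrac{\partial_r\phi}{r}$ cross terms, yielding the first formula; the second is obtained identically, the sign flip on the $h\|\sqrt{B}u\|^2$ term coming from the sign flip in \eqref{eq.relation.dhm-Mhm}.
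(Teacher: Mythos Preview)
Your proposal is correct and follows essentially the same route as the paper: use the factorization identities $d_{h,m}^\times d_{h,m}=\mathscr{M}_{h,m+1}+hB$ and $d_{h,m+1}d_{h,m+1}^\times=\mathscr{M}_{h,m+1}-hB$ from Proposition~\ref{proposition.extension.operateur}, then expand $\langle \mathscr{M}_{h,m+1}u,u\rangle=\|\mathbf{p}_{h,m+1}u\|^2$ via the orthonormality of $(e_r,e_s)$. Your added care about the $H^1_0$ regularity (arguing by density from $\cC_c^\infty$) and the alternative direct pointwise expansion are both sound, though the paper simply states the first identity without further justification.
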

    \begin{proof}
Let $h\in]0,1]$, $m\in\mathbb{R}$ and $u \in H^1_0 \autopar{[\rho_1, \rho_2], \CC}$. By using Proposition \ref{proposition.extension.operateur}, one easily checks that
\begin{align*}
\left\|d_{h,m}u\right\|^2
&= \left\|\textbf{p}_{h,m+1} u\right\|^2 + h \left\|\sqrt{B} \, u\right\|^2 \\
&= h^2\left\|\left(\partial_r-\frac{1}{2r}\right)u\right\|^2
+
\left\|
\left(\frac{h(m+1)}{r}-\partial_r\phi\right)u
\right\|^2
+
h\left\|
\sqrt{B}u
\right\|^2 \,.
\end{align*}
We proceed in the same way for $\left\|d^\times_{h,(m+1)} u\right\|^2$.
    \end{proof}
Next proposition gives an explicit description of the kernel of the adjoint of $d_{h,m}$ and of its orthogonal. 
    \begin{proposition} ~~
    \label{propo.Dirac}
\begin{enumerate}[label = (\roman*)]
    \item The operator $\autopar{d_{h,m} , \; \mathrm{Dom} \autopar{d_{h,m}}}$ is closed with closed range.
    \item The adjoint $\autopar{d_{h,m}^* , \; \mathrm{Dom} \autopar{d_{h,m}^*}}$ acts as $d_{h,m}^{\times}$ on $\mathrm{Dom} \autopar{d_{h,m}^*} = H^1 \autopar{[\rho_1, \rho_2], \CC} $ 
    and 
    $$
    \mathrm{ker} \autopar{d_{h,m}^*} = \mathrm{Vect} \autopar{r \mapsto e^{- \phi /h} r^{m +1/2}}. 
    $$
    \item We have, $\mathrm{ker} \autopar{d_{h,m}^*}^{\perp} \cap \mathrm{Dom} \autopar{d_{h,m}^*} = \left\{ d_{h,m} w ; \; \; w \in H_0^1 \cap H^2 \autopar{[\rho_1, \rho_2], \CC} \right\}$.
\end{enumerate}
    \end{proposition}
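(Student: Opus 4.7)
The plan is to handle the three parts in order, with the coercivity estimate from Lemma \ref{lemme.calcul_norme2_d_h,m} driving (i), a direct duality computation giving (ii), and the closed range theorem together with a one-dimensional elliptic bootstrap handling (iii).

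For (i), I will invoke Assumption \ref{hypothese1} to write $B \geq B_0 > 0$ on $[\rho_1,\rho_2]$, so that Lemma \ref{lemme.calcul_norme2_d_h,m} yields $\|d_{h,m}u\|^2 \geq hB_0\|u\|^2 + h^2\|(\partial_r - \tfrac{1}{2r})u\|^2$ for every $u\in H_0^1$. The first term makes $d_{h,m}$ bounded below, hence injective with closed range by the standard criterion. Combined with $r\geq \rho_1 > 0$, the second term shows that the graph norm of $d_{h,m}$ is equivalent to the $H^1$-norm on $H_0^1$. Consequently, a graph-Cauchy sequence is bounded in $H^1_0$, hence weakly convergent in $H^1_0$ (which is weakly closed), and $L^2$-continuity of $d_{h,m}$ on $H^1$ identifies $d_{h,m}u$ as the $L^2$-limit. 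Closedness follows.

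For (ii), the inclusion $H^1 \subseteq \mathrm{Dom}(d_{h,m}^*)$ together with the identification $d_{h,m}^* = d_{h,m}^\times$ on $H^1$ will come from integration by parts on $[\rho_1,\rho_2]$: the boundary contributions at $r=\rho_1,\rho_2$ vanish because $u\in H_0^1$ has zero trace. For the reverse inclusion, given $v\in\mathrm{Dom}(d_{h,m}^*)$, testing against $u\in\mathcal{C}_c^\infty(]\rho_1,\rho_2[)$ shows that $d_{h,m}^\times v$, in the sense of distributions, lies in $L^2$; isolating $\partial_r v$ from the explicit formula for $d_{h,m}^\times$ and using that the multipliers $\tfrac{m+1/2}{r}$ and $\tfrac{\partial_r\phi}{h}$ are bounded on $[\rho_1,\rho_2]$, I obtain $\partial_r v\in L^2$, hence $v\in H^1$. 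To describe the kernel, I will solve the first-order linear ODE $d_{h,m}^\times v = 0$, equivalent to $\partial_r v = \autopar{\tfrac{m+1/2}{r} - \tfrac{\partial_r\phi}{h}} v$, whose solution space is the one-dimensional span of $r\mapsto r^{m+1/2} e^{-\phi(r)/h}$.

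Part (iii) will then follow from the closed range theorem: by (i), $\mathrm{Ran}(d_{h,m}) = \ker(d_{h,m}^*)^\perp$, so the set to identify is $\mathrm{Ran}(d_{h,m}) \cap H^1$, using $\mathrm{Dom}(d_{h,m}^*) = H^1$ from (ii). The inclusion $\supseteq$ is immediate because for $w\in H_0^1\cap H^2$, the formula $d_{h,m}w = -ih\partial_r w + (\cdots)w$ with smooth bounded multipliers lies in $H^1$. For the converse, given $u\in H_0^1$ with $d_{h,m}u = f\in H^1$, the identity $\partial_r u = \tfrac{i}{h}f - \tfrac{m+1/2}{r}u + \tfrac{\partial_r\phi}{h}u$ has right-hand side in $H^1$, so $u\in H^2$ by a one-dimensional bootstrap. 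I expect the main technical obstacle to lie in the reverse inclusion of (ii), where one must upgrade a distributional identity to genuine $H^1$-regularity of $v$; this is the delicate step that distinguishes the actual domain of the adjoint from the formal expression $d_{h,m}^\times$.
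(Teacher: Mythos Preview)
Your proposal is correct and follows the paper's proof closely in parts (i) and (ii): both use Lemma~\ref{lemme.calcul_norme2_d_h,m} to get the coercivity $\|d_{h,m}u\|^2 \geq hB_0\|u\|^2$ and the equivalence of the graph norm with the $H^1_0$-norm, and both identify $\mathrm{Dom}(d_{h,m}^*)=H^1$ by the same duality/distributional argument and solve the first-order ODE for the kernel. Your closedness argument in (i) is phrased via weak convergence, which is slightly more elaborate than necessary---once the graph norm is equivalent to the $H^1_0$-norm, completeness of $H^1_0$ gives closedness immediately---but it is not wrong.

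In (iii) your route diverges mildly from the paper's. The paper invokes Proposition~\ref{proposition.extension.operateur} to write $d_{h,m}^\times d_{h,m} = \mathscr{M}_{h,m+1} + hB$ and then reads off that $d_{h,m}w \in \mathrm{Dom}(d_{h,m}^*)$ forces $(\mathscr{M}_{h,m+1}+hB)w \in L^2$, i.e.\ $w\in H^2$. You instead isolate $\partial_r u$ from $d_{h,m}u = f$ and bootstrap directly: $f\in H^1$ and $u\in H^1$ with smooth bounded multipliers give $\partial_r u \in H^1$, hence $u\in H^2$. Both arguments are valid; yours is more self-contained (it does not rely on the algebraic factorization), while the paper's is cleaner in that it ties the regularity condition to a recognizable second-order operator.
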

    \begin{proof} ~ \\
        \begin{enumerate}[label = (\roman*)]
            \item Lemma \ref{lemme.calcul_norme2_d_h,m} ensures the equivalence between the norm $H_0^1 \autopar{[\rho_1, \rho_2], \CC}$ and the graph norm of $d_{h,m}$. Thus, the operator $\autopar{d_{h,m} , \; \mathrm{Dom} \autopar{d_{h,m}}}$ is closed. The closed image property follows from the same lemma. Indeed, for any $u \in H_0^1 \autopar{[\rho_1, \rho_2], \CC}$
    \begin{equation*}
    \left\|d_{h,m} u\right\| \geq \sqrt{2 h B_0 } \left\|u\right\|,
    \end{equation*}
    with $B_0 = \inf_{x \in \Omega} B(x) >0$ by assumption.
            \item By definition,
    \begin{equation*}
    \mathrm{Dom} \autopar{d_{h,m}^*} \subset \left\{ u \in L^2 ; \; \; d_{h,m}^\times u \in L^2  \right\} = H^1 \autopar{[\rho_1, \rho_2], \CC} .
    \end{equation*}
    On the other hand, if $v \in H^1 \autopar{[\rho_1, \rho_2], \CC}$ we have for all $w\in \cC^{\infty}_c \autopar{[\rho_1, \rho_2], \CC}$
    \begin{equation*}
    \left\langle v, d_{h,m} w  \right\rangle = \left\langle d_{h,m}^\times v , w  \right\rangle,
    \end{equation*}
    and $d_{h,m}^\times v \in L^2 \autopar{[\rho_1, \rho_2], \CC}$. Finally, we can extend by density this result to $\mathrm{Dom} \autopar{d_{h,m}}$ to obtain $v \in \mathrm{Dom} \autopar{d_{h,m}^*}$ and $d_{h,m}^* = d_{h,m}^\times $.\\
    Moreover,
    \begin{align*}
    \mathrm{ker} \autopar{d_{h,m}^*} & = \left\{ u \in L^2 ; \; \;  d_{h,m}^\times u =0 \right\} \\
    & = \left\{ e^{-\phi/h} v , v \in L^2 ; \; \;  \autopar{\partial_r - \frac{m+1/2}{r}} v =0 \right\} \\
    & = \mathrm{Vect} \autopar{r \mapsto e^{- \phi /h} r^{m +1/2}}.
    \end{align*}
            \item The following equalities are consequences of Proposition \ref{proposition.extension.operateur},
    \begin{equation*}
    \begin{split}
    &\mathrm{ker} \autopar{d_{h,m}^*}^{\perp} \cap \mathrm{Dom} \autopar{d_{h,m}^*} = \mathrm{Im} \autopar{d_{h,m}} \cap \mathrm{Dom} \autopar{d_{h,m}^*} \\
    & = \left\{ d_{h,m} w ; \; \; w \in H_0^1   \text{ and } d_{h,m}^\times d_{h,m} w = \autopar{\cM_{h,m+1} + hB}w \in L^2 \right\} \\
    & = \left\{ d_{h,m} w ; \; \; w \in H_0^1 \cap H^2 \autopar{[\rho_1, \rho_2], \CC} \right\} .
    \end{split}
    \end{equation*}
    \end{enumerate}
\end{proof}
        \subsection{Back to the Pauli operator and symmetry of the Dirac operator}
Following the same idea as in Remark \ref{remarque.prelude.moment.reels}, we can now extend the definition of the Pauli operator for real moments with a magnetic potential equal to $\mathbf{A_{h,p}}$, with $p \in \ZZ$ and $h \in ]0,1]$. The main result of this part is Proposition \ref{corollaire.minmax.mom_reel} that give us an explicit formula for the spectrum of $\cL_{h,m}^-$.
    \begin{definition} \label{definition.sp_lambda_hm}
Let $h>0$, $m \in \RR$ and $p\in \ZZ$.\\
Consider $\left( \cL_{h,m}^{\pm} , Dom\left( \cL_{h,m}^{\pm} \right) \right),$ the self-adjoint operators acting as
    \begin{equation*}
\cL_{h,m}^{-} = d_{h,\widetilde{m}} d_{h,\widetilde{m}}^\times \, , \; \; \; \cL_{h,m+1}^{+} = d_{h,\widetilde{m}}^\times d_{h,\widetilde{m}} \; \; and \; \; Dom\left( \cL_{h,m}^{\pm} \right) = H^1_0 \cap H^2 \autopar{[\rho_1, \rho_2], \CC},
    \end{equation*}
where $d_{h,\widetilde{m}}$ defined in Definition \ref{definition.extension.op}, with $\widetilde{m} = m - \gamma_{h,p}$ and $\gamma_{h,p}$ defined in Proposition \ref{propo.potentiel.flux}. Moreover, we let $\left( \lambda_{k,\widetilde{m}} (h) \right)_{k\in \NN^{*}}$ be the non-decreasing sequence of eigenvalues associated to $\cL^{-}_{h,m}$.
    \end{definition}
    \begin{remark}
By using the min-max theorem, we have
$$
\lambda_{k,m} (h) = \underset{\underset{\mathrm{dim}V = k}{V \subset \mathrm{Dom}\autopar{d_{h,m}^{\times}}}}{\mathrm{inf}} \; \underset{v\in V\backslash \{ 0 \} }{\mathrm{sup}} \frac{\left\| d_{h,m}^{\times} \, v \right\|^2_2}{\left\| v \right\|^2_2} .
$$
with $d_{h,m}^\times$ defined in Definition \ref{definition.extension.op}.
    \end{remark}
The next result (see \cite[Lemma 3.11]{barbaroux:hal-01889492}) ensures that it is possible to modify the magnetic field of the Dirac operator by composing with well chosen matrices. This explains the choice, made in subsection \ref{subsection.op_moments_reels}, to define the Dirac operator for the magnetic potential $\mathbf{A_0} = \nabla^\perp \phi$. We invite the reader to consult the paper \cite{barbaroux:hal-01889492} for a proof of the next lemma.
    \begin{lemma}
    \label{lemme.jauge}
We have the following relation
\begin{equation}
    e^{\sigma_3 \frac{\Psi}{h}} \; \; \sigma \cdot {\bf p} \; \; e^{\sigma_3 \frac{\Psi}{h}} = \sigma \cdot \left( {\bf p } - \nabla\Psi^{\perp}  \right),
\end{equation}
with $\Psi \in \cC^1 \autopar{\overline{\Omega}}$ and $\sigma_3$ defined in \eqref{eq.matrices-Pauli}.
    \end{lemma}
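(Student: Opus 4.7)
The plan is a direct computation relying on the anticommutation relations of the Pauli matrices. The key observation is that although $e^{\sigma_3 \Psi/h}$ is Hermitian (not unitary), the symmetric sandwich $M\, \sigma\cdot \mathbf{p}\, M$ with $M = e^{\sigma_3 \Psi/h}$ simplifies nicely because of the relations $\sigma_j \sigma_3 = -\sigma_3 \sigma_j$ for $j=1,2$. These imply by series expansion that
\begin{equation*}
\sigma_j \, e^{\sigma_3 \Psi/h} = e^{-\sigma_3 \Psi/h} \, \sigma_j \, , \qquad j=1,2 \, ,
\end{equation*}
so that $M \sigma_j M = \sigma_j$.

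Next I would expand $\mathbf{p}(M f)$ using the Leibniz rule, which gives, since $\sigma_3$ and $\Psi$ commute with each other and with $M$:
\begin{equation*}
p_j \bigl( e^{\sigma_3 \Psi/h} f \bigr) = -i \sigma_3 (\partial_j \Psi)\, e^{\sigma_3 \Psi/h} f + e^{\sigma_3 \Psi/h} \, p_j f\, .
\end{equation*}
Multiplying on the left by $\sigma_j$, then by $M$, and summing over $j$, the anticommutation above produces
\begin{equation*}
M \, \sigma \cdot \mathbf{p}\, M \, f = -i \, \sigma_j \sigma_3 \, (\partial_j \Psi)\, f + \sigma \cdot \mathbf{p}\, f \, ,
\end{equation*}
because $M \sigma_j \sigma_3 M = \sigma_j \sigma_3$ (the $\sigma_3$ factor commutes with $M$, so we are back to $M\sigma_j M = \sigma_j$).

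Finally I would identify the correction term. Using $\sigma_1\sigma_3 = -i\sigma_2$ and $\sigma_2\sigma_3 = i\sigma_1$, one finds
\begin{equation*}
-i \sum_{j=1}^{2} \sigma_j \sigma_3 \, \partial_j \Psi = \sigma_1 \, \partial_2 \Psi - \sigma_2 \, \partial_1 \Psi = -\sigma \cdot \nabla^\perp \Psi \, ,
\end{equation*}
with the sign convention $\nabla^\perp \Psi = (-\partial_2 \Psi , \partial_1 \Psi)$ used throughout the paper (cf.\ \eqref{eq.rotationnel}). Combining the two displays yields exactly the claimed identity. There is no real obstacle here beyond sign-bookkeeping: the only points requiring care are the choice of convention for $\nabla^\perp$ and the fact that the three useful commutations ($\sigma_3$ with itself and with $M$; $\sigma_j$ anticommuting with $\sigma_3$ and hence conjugation-inverting $M$) are applied in the correct order.
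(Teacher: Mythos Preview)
Your computation is correct. The paper does not actually give its own proof of this lemma: it states the result and refers the reader to \cite[Lemma~3.11]{barbaroux:hal-01889492}, so there is no in-paper argument to compare against. Your approach---using $\sigma_j\sigma_3=-\sigma_3\sigma_j$ to get $M\sigma_jM=\sigma_j$ and then applying the Leibniz rule---is precisely the standard direct verification and matches what one finds in the cited reference.
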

We use this lemma to make the change of function $v = e^{-\phi/h} u$.
    \begin{proposition} \label{corollaire.minmax.mom_reel}
Let $k \in \NN^* $, $m \in \RR$ and $h\in ]0,1]$. We have
    \begin{equation} \label{eq.minmax.moment.reels}
        \lambda_{k,m} (h) = \underset{\underset{\mathrm{dim}V = k}{V \subset H_{0}^{1} \left([\rho_1 , \rho_2], \CC \right)}}{\mathrm{inf}} \; \underset{v\in V\backslash \{ 0 \} }{\mathrm{sup}} \; \frac{\displaystyle{h^2 \int_{\rho_{1}}^{\rho_{2}}  e^{-2\phi / h} \valabs{\autopar{ \partial_{r} - \frac{m + 1/2}{r}} v(r)}^2 \diff r }}{ \displaystyle{\int_{\rho_{1}}^{\rho_{2}} e^{-2\phi /h} \valabs{v(r)}^2  \diff r} }.
    \end{equation}
    \end{proposition}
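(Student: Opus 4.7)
The plan is to combine a variational characterization of $\lambda_{k,m}(h)$ as a min-max of the quadratic form $u\mapsto \|d^\times_{h,m} u\|^2$ over $H^1_0$ with the gauge-type change of unknown $u = e^{-\phi/h} v$, whose role is to cancel the scalar potential $\phi$ appearing in $d^\times_{h,m}$.

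I would first argue that, although Definition \ref{definition.sp_lambda_hm} prescribes the operator domain $H^1_0\cap H^2$, the quadratic form associated with the Dirichlet realization $\cL^-_{h,m} = d_{h,m} d^\times_{h,m}$ is $u\mapsto \|d^\times_{h,m} u\|^2$ with form domain $H^1_0([\rho_1,\rho_2],\CC)$, as is suggested in the remark preceding the statement. The only non-automatic point is the identification of the form: for $u\in H^1_0\cap H^2$ and $v\in H^1_0$, an integration by parts gives
$$
\langle d_{h,m} d^\times_{h,m} u,\, v\rangle = \langle d^\times_{h,m} u,\, d^\times_{h,m} v\rangle,
$$
the boundary term vanishing because $v(\rho_1)=v(\rho_2)=0$. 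Since $H^1_0\cap H^2$ is dense in $H^1_0$ for the $H^1$-norm, the form extends continuously and the min-max theorem yields
$$
\lambda_{k,m}(h) = \inf_{\substack{V\subset H^1_0\\ \dim V = k}}\sup_{u\in V\setminus\{0\}} \frac{\|d^\times_{h,m} u\|^2}{\|u\|^2}.
$$

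Next, I would apply the substitution $u = e^{-\phi/h} v$. Because $\phi\in\cC^\infty(\overline{\Omega},\RR)$ is smooth on the compact interval and vanishes at the two endpoints, multiplication by $e^{\pm\phi/h}$ is a topological isomorphism of $H^1_0([\rho_1,\rho_2],\CC)$ (preserving in particular the Dirichlet boundary conditions). A direct computation---morally the scalar counterpart of Lemma \ref{lemme.jauge}, in which the conjugation by $e^{\sigma_3 \phi/h}$ gauges away the potential $\nabla^\perp \phi$---shows that the derivative of $e^{-\phi/h}$ exactly cancels the term $i\partial_r\phi$ in $d^\times_{h,m}$, so that
$$
d^\times_{h,m}\bigl(e^{-\phi/h} v\bigr) = -ih\, e^{-\phi/h}\autopar{\partial_r - \frac{m+1/2}{r}} v.
$$

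Inserting this identity, together with $|u|^2 = e^{-2\phi/h}|v|^2$, into the variational formula of the first step and relabelling the new unknown as $v$ produces the announced weighted min-max. The only mildly technical step is the identification of the form domain in the first paragraph; the rest reduces to the elementary conjugation computation and a change of variable in the min-max.
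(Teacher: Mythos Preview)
Your proof is correct and follows essentially the same approach as the paper: the paper's proof is just the one-line remark ``We use this lemma to make the change of function $v = e^{-\phi/h} u$'' (referring to Lemma~\ref{lemme.jauge}), and you have faithfully expanded this into the min-max identification of the form domain as $H^1_0$ followed by the conjugation $u = e^{-\phi/h} v$ that kills the $\partial_r\phi$ term in $d_{h,m}^\times$. Your explicit verification that multiplication by $e^{\pm\phi/h}$ is an isomorphism of $H^1_0$ and your direct scalar computation of $d_{h,m}^\times(e^{-\phi/h}v)$ are exactly the details the paper leaves implicit.
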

            \section{Proof of Theorem \ref{theorem.principal}} 
In this section we establish Theorem \ref{theorem.principal}. 
    \begin{notation}
Let $j\in \NN^*$, $h > 0$ and $m \in \RR$, we consider
\[f_{j,h} (m) = \frac{\lambda_{j,m} (h)}{\sqrt{h}} e^{-2 \phi_{\min}/h}  \,.\]
    \end{notation}
The following proposition will be proved in Sections \ref{sec:majoration} and \ref{section.preuve.propo} (for the first two points) and Section \ref{section.estim.elliptiques} (for the last point).
    \begin{proposition} \label{propo.CVU.coercivite}
We have 
\begin{enumerate}
\item \label{pt1.propCVU_coercivite} $\autopar{f_{1,h} }_{h \in ]0, 1] }$ is uniformly convergent on any compact to $f\colon \RR \rightarrow \RR^+$ when $h$ tends to $0$ where
\begin{equation} \label{eq.f}
f\colon m \longmapsto 2 \sqrt{\frac{\phi''_{\min}}{\pi}} \autopar{\partial_n\phi\autopar{\rho_1} \autopar{\frac{\rho_1}{r_{\min}} }^{2m+1} + \partial_n\phi\autopar{\rho_2} \autopar{\frac{\rho_2}{r_{\min}} }^{2m+1}  } \, .
\end{equation}
\item \label{pt2.propCVU_coercivite}
Let $K>0$ and $h_0>0$. There exists $g : \RR \rightarrow \RR^+ $ coercive such that for all $h\in ]0, h_0] \, ,$ and $m \in \RR$ satisfying
\[
f_{1,h}(m)\leq K\,,
\]
one has
$f_{1,h} (m) \geq g (m)$.
\item  \label{pt3.propCVU_coercivite} 
For all $h>0$, $j\geq 2$ and $m\in \RR$, we have
\[
f_{j,h}(m)\geq 2B_0 \sqrt{h}e^{-2\phi{\rm min}/h}\,.
\]
\end{enumerate}
    \end{proposition}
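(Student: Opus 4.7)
I treat the three statements in order of increasing difficulty, starting with (3). Since $f_{j,h}(m) = \lambda_{j,m}(h)/(\sqrt{h}\,e^{2\phi_{\min}/h})$, part (3) amounts to proving $\lambda_{j,m}(h)\geq 2hB_0$ for all $j\geq 2$, and by monotonicity it suffices to treat $j=2$. I apply the max--min principle with the one-dimensional trial subspace $W = \mathrm{Vect}(\psi_0)$, where $\psi_0(r) = e^{-\phi/h}r^{\widetilde m + 1/2}$ spans $\ker(d_{h,\widetilde m}^*)$ (Proposition~\ref{propo.Dirac}). For $v \in H_0^1\cap H^2$ with $v\perp\psi_0$, Proposition~\ref{propo.Dirac}(iii) provides $w\in H_0^1\cap H^2$ such that $v = d_{h,\widetilde m} w$. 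Using the supersymmetric identity $d_{h,\widetilde m}^\times d_{h,\widetilde m} = \mathscr{M}_{h,\widetilde m+1} + hB$ from~\eqref{eq.relation.dhm-Mhm}, the Rayleigh quotient rewrites as $\|\cT w\|^2/\langle\cT w,w\rangle$ with $\cT := \mathscr{M}_{h,\widetilde m+1}+hB$. Invoking~\eqref{eq.relation.dhm-Mhm} once more (second line, with index $\widetilde m+1$) yields $\mathscr{M}_{h,\widetilde m+1} = d_{h,\widetilde m+1}d_{h,\widetilde m+1}^\times + hB \geq hB_0$, so $\cT\geq 2hB_0$; the spectral theorem then gives $\|\cT w\|^2 \geq 2hB_0\,\langle\cT w,w\rangle$, whence $\lambda_{2,m}(h) \geq 2hB_0$.

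For parts (1) and (2) I rely on the variational reformulation of Proposition~\ref{corollaire.minmax.mom_reel}. The substitution $v(r) = r^{m+1/2}g(r)$, which is a bijection of $H_0^1$, transforms the Rayleigh quotient into
\[
\lambda_{1,m}(h) = h^2 \inf_{g\in H_0^1\setminus\{0\}} \frac{\int_{\rho_1}^{\rho_2}e^{-2\phi/h}r^{2m+1}\,|g'|^2\,dr}{\int_{\rho_1}^{\rho_2}e^{-2\phi/h}r^{2m+1}\,|g|^2\,dr}.
\]
For the upper bound I choose the piecewise trial function $g_h$ solving $(w_{h,m}g_h')' = 0$ with $g_h(r_{\min}) = 1$ and Dirichlet boundary values, where $w_{h,m}(r) = e^{-2\phi(r)/h}r^{2m+1}$; on each subinterval $g_h(r) = \frac{\int_{\rho_j}^r w_{h,m}^{-1}\,ds}{\int_{\rho_j}^{r_{\min}} w_{h,m}^{-1}\,ds}$ with $\rho_j\in\{\rho_1,\rho_2\}$ the nearest boundary point. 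Laplace's method applied to the denominator (Gaussian peak at $r_{\min}$, curvature $\phi''(r_{\min}) = B(r_{\min})$) and to each normalisation $\int_{\rho_j}^{r_{\min}}w_{h,m}^{-1}$ (exponential peak at $\rho_j$, where $\phi$ vanishes linearly with slope $\mp\partial_n\phi(\rho_j)$) reproduces exactly $f(m)\sqrt{h}\,e^{2\phi_{\min}/h}(1+o(1))$, with error locally uniform in $m$. The matching lower bound, proved in Section~\ref{section.estim.elliptiques}, uses Agmon-type weighted estimates to force any quasi-minimizer to be close to $g_h$ in the natural weighted norm, hence to produce a Rayleigh quotient at least $f(m)(1-o(1))\sqrt{h}\,e^{2\phi_{\min}/h}$.

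Finally, the coercivity (2) is an extrapolation of the same analysis to large $|m|$. When $|m|$ grows, the maximum of $w_{h,m}$ migrates from $r_{\min}$ towards $\rho_2$ (as $m\to+\infty$) or $\rho_1$ (as $m\to-\infty$), and a revisited Laplace computation shows that $f_{1,h}(m) \geq c\bigl((\rho_1/r_{\min})^{2m+1}+(\rho_2/r_{\min})^{2m+1}\bigr)$ for some $c>0$ independent of $h\in(0,h_0]$, under the hypothesis $f_{1,h}(m)\leq K$. Combined with the pointwise lower bound from (1) on any bounded set of $m$, this produces the required coercive function $g$. The principal technical difficulty, addressed in Section~\ref{section.estim.elliptiques}, is the lower bound of (1): a priori a quasi-minimizer could delocalise or develop oscillations that cheapen the Dirichlet energy, and ruling this out requires the weighted Agmon and localisation estimates of that section; once available, the quantitative matching with the explicit trial profile above is routine.
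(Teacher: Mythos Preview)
Your treatment of (3) is correct and close to the paper's: both rest on the supersymmetric bound $d_{h,m}^\times d_{h,m}\geq 2hB_0$ coming from~\eqref{eq.relation.dhm-Mhm}, though the paper routes through a Dirichlet--Neumann comparison (Proposition~\ref{propo_minoration_haut_du_spectre}) which your direct max--min neatly avoids. Your upper bound for (1), via the piecewise solution of $(w_{h,m}g')'=0$, is a valid and in fact cleaner alternative to the paper's cutoff monomial of Definition~\ref{definition:fonction_test}, and the Laplace computation you sketch does recover $f(m)$ exactly with locally uniform errors in $m$.

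The gap is in the lower bound for (1), which also carries (2). Saying that Agmon/localisation forces the minimizer $g_1$ close to $g_h$ in the weighted $L^2$ norm does not give what you need: the weighted Dirichlet form $g\mapsto\int w|g'|^2$ is not continuous in that topology, and upgrading to $H^1_w$-closeness would require controlling the very quantity you are trying to bound from below. The paper's mechanism is different and hinges on an ingredient your plan omits: a \emph{pointwise boundary trace} estimate (the second inequality of Proposition~\ref{proposition_inegalite_ellipticite}, exploited in Proposition~\ref{propo:approx_holo}) showing that for $u\perp\ker(d_{h,m}^*)$ one has $\partial_n\phi(\rho_j)\,|u(\rho_j\pm\delta)|^2\leq Ch^{-3/2}\|d_{h,m}^\times u\|^2$. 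Combined with a one-dimensional boundary-layer variational lower bound on the numerator in terms of $|v(\rho_j\pm\delta)|^2$ (Lemma~\ref{lemme_reduction_num}), this is what allows one to replace the eigenfunction's value at the inner edge of each boundary layer by that of the monomial $\alpha_m r^{m+1/2}$ with a negligible error, and thereby match the upper bound. The localisation of Proposition~\ref{propo:localisation_fct_propre} is used only to control the \emph{denominator}; the numerator lower bound lives at the boundary, not near $r_{\min}$, and that is precisely where weighted-$L^2$ closeness carries no information.
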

    \begin{remark} \label{remark.min.f}
For all $m \in \RR\, ,$
$$
f(m) \geq c >0 \, ,
$$
with $c = 2 \sqrt{\frac{\phi''_{\min}}{\pi}} \min \, \autopar{\partial_n\phi\autopar{\rho_1}, \partial_n\phi\autopar{\rho_2}}$.
    \end{remark}
Thus, let us temporarily admit that Proposition \ref{propo.CVU.coercivite} holds.

\medskip

Now we can use Proposition \ref{propo.potentiel.flux} and choose, for each $h>0$, a good magnetic potential. For all $h>0 \, ,$ we consider the Dirichlet-Pauli operator associated to the vector potential 
$$
\mathbf{A_h} = \nabla^{\perp} \phi + h \gamma(h) \mathrm{ln} \autopar{\frac{\rho_1}{\rho_2}} \nabla^{\perp} \theta \; ,
$$
with $\gamma(h) = \frac{c_0}{h} - \left\lfloor \frac{c_0}{h} \right\rfloor$ in such a way that $\gamma(h) \in [0,1]$. \\
Definition \ref{definition.sp_lambda_hm} and Remark \ref{remark.union.sp} ensure that
\begin{equation} \label{eq.spectre_de_l_moins}
\displaystyle{
Sp \autopar{\cL^{-}_h} = \lb \lambda_{j,\, m - \gamma(h)} (h)\,, m\in\ZZ\,,j\in \NN^* \rb .
}
\end{equation}
The following lemma justifies the existence of the prefactor given in Theorem \ref{theorem.principal}.
    \begin{lemma}\label{lem.deflimf}
Let $h>0$, $k\in\NN^*$, we consider the non-decreasing sequence
\begin{equation}\label{eq.minmaxf}
\alpha_k(h) = \inf_{\tiny\begin{array}{c}V\subset\ZZ\\\# V = k\end{array}}\sup_{m\in V}f(m-\gamma(h))\,.
\end{equation}
Then, there exists a $k$-tuple
\[
V_k(h) = \{\mu_1(h),\dots,\mu_k(h)\} 
\]
which realizes the infimum. Moreover, the functions $h\mapsto \alpha_k(h)$ and $h\mapsto \mu_j(h)$ are bounded for $j\in\{1,\dots,k\}$.
    \end{lemma}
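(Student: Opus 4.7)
The plan rests on two observations. First, the explicit function $f$ from \eqref{eq.f} is continuous and coercive on $\RR$: by Hopf's lemma $\partial_n\phi\autopar{\rho_1},\partial_n\phi\autopar{\rho_2}>0$, while $\rho_1/r_{\min}<1<\rho_2/r_{\min}$, so the first exponential in \eqref{eq.f} blows up at $-\infty$ and the second blows up at $+\infty$, giving $f(m)\to+\infty$ as $|m|\to\infty$. Second, by construction $\gamma(h)=c_0/h-\lfloor c_0/h\rfloor\in[0,1]$ for every $h>0$, so the translations $m-\gamma(h)$ stay at bounded distance from the integers uniformly in $h$.

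I would first produce a uniform upper bound on $\alpha_k(h)$ by testing the infimum in \eqref{eq.minmaxf} with the fixed $k$-subset $V_0=\{0,1,\ldots,k-1\}$:
\[
\alpha_k(h)\leq \max_{0\leq j\leq k-1} f(j-\gamma(h))\leq M_k,\qquad M_k:=\sup_{t\in[0,1]}\max_{0\leq j\leq k-1}f(j-t)<\infty,
\]
which is finite by continuity of $f$ on the compact set $[-1,k-1]$. This already yields boundedness of $h\mapsto\alpha_k(h)$.

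Next, coercivity of $f$ combined with the compactness of $[0,1]$ produces a constant $R_k>0$, depending only on $k$ and $f$, such that $f(m-t)>M_k+1$ for every integer $m$ with $|m|>R_k$ and every $t\in[0,1]$. Consequently, any $k$-subset $V\subset\ZZ$ containing an integer of absolute value strictly greater than $R_k$ satisfies $\sup_{m\in V}f(m-\gamma(h))>M_k+1>\alpha_k(h)$ and hence cannot realize the infimum. The infimum in \eqref{eq.minmaxf} thus reduces to a minimum over the \emph{finite} collection of $k$-subsets of $[-R_k,R_k]\cap\ZZ$, so it is attained by some $V_k(h)=\{\mu_1(h),\ldots,\mu_k(h)\}$; every element satisfies $|\mu_j(h)|\leq R_k$, which is the required uniform boundedness.

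The argument is of the standard ``localize-the-minimum-via-coercivity'' type and I do not foresee any real obstacle. The only point that needs a touch of care is uniformity in $h$, which is granted for free because $\gamma(h)$ lives in the fixed compact interval $[0,1]$.
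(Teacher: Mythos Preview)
Your argument is correct and follows essentially the same route as the paper: coercivity of $f$ together with $\gamma(h)\in[0,1]$ confines the competing $k$-subsets to a fixed finite window of integers, so the infimum is a minimum with uniformly bounded minimizers; the paper phrases this via a minimizing sequence for fixed $h$ rather than via your explicit trial set $V_0$, but the mechanism is identical. One small omission: the statement also claims that $k\mapsto\alpha_k(h)$ is non-decreasing, which the paper checks explicitly (any $(k+1)$-subset contains a $k$-subset, hence $\alpha_k(h)\leq\sup_{m\in W}f(m-\gamma(h))$ for every $(k+1)$-subset $W$) and you do not address.
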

    \begin{proof}
Let $k\in\mathbb{N}^*$ and $h>0$ fixed. Let $(W_j)_{j\geq 1}$ a minimizing sequence of \eqref{eq.minmaxf}, \ie for $j\in\{1,\dots,k\}$ $W_j$ is a $k$-tuple in $\ZZ$.
Since the function $m\mapsto f(m-\gamma(h))$ is coercive, the sets $W_j$ are uniformly bounded with respect to $j$. There exists a subset of $\mathbb{Z}$ which realizes the infimum. The continuity of $f$ ensures that $h\mapsto \alpha_k(h)$ is bounded.\\
The sequence $\left( \alpha_k (h) \right)_{k\geq 1}$ is non-decreasing, in fact for all $W \subset \ZZ^{k+1}$ we have
$$
\alpha_k (h) = \sup_{m\in V_{k}}f(m-\gamma(h)) \leq \sup_{m\in W}f(m-\gamma(h)).
$$
By taking the infimum on the set of $k$-tuple, we have
$$
\alpha_k (h) \leq \alpha_{k+1} (h).
$$
    \end{proof}
    \begin{proposition}\label{proposition.borne.sup}
Let $k\in\NN^*$, $h>0$, we have
\[
\lambda_k(h)\leq \alpha_k(h)\sqrt{h}e^{2\phi_{\rm min}/h}(1+o_{h\to0}(1))\,.
\]
    \end{proposition}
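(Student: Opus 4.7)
The plan is to combine the min-max characterization \eqref{notation.minmax1} with the Fourier-fiber decomposition \eqref{eq.spectre_de_l_moins} of $\mathcal{L}_h^-$: I will build a $k$-dimensional test subspace of $\mathrm{Dom}(\mathcal{L}_h^-)$ made of ground-state eigenfunctions of $k$ distinct fibers, indexed by the minimizing tuple $V_k(h)$ provided by Lemma \ref{lem.deflimf}.

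Concretely, the first step is to pick $V_k(h) = \{\mu_1(h), \ldots, \mu_k(h)\} \subset \mathbb{Z}$ realizing the infimum in \eqref{eq.minmaxf}; by the second assertion of Lemma \ref{lem.deflimf}, each $\mu_j(h)$ stays uniformly bounded as $h \to 0$. For each $j$, I take a normalized ground state $v_j^h$ of $\mathcal{L}_{h, \mu_j(h)-\gamma(h)}^-$, whose eigenvalue is $\lambda_{1, \mu_j(h)-\gamma(h)}(h)$, and lift it to an eigenfunction $u_j^h$ of $\mathcal{L}_h^-$ on $\Omega$ via the formula of Lemma \ref{lemme.union.sp}, embedded as the first component of a $\mathbb{C}^2$-valued function. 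Because the integers $\mu_j(h)$ are pairwise distinct, the Fourier modes $e^{i\mu_j(h)s}$ are orthogonal in $L^2([0, 2\pi])$, so $(u_j^h)_{j=1,\ldots,k}$ spans a $k$-dimensional subspace $V^h \subset H_0^1\cap H^2(\Omega,\mathbb{C}^2)$ on which $\mathcal{L}_h^-$ acts diagonally with eigenvalues $\lambda_{1, \mu_j(h)-\gamma(h)}(h)$.

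Applying \eqref{notation.minmax1} to $V^h$ then yields
\[
\lambda_k(h) \leq \max_{j=1,\ldots,k} \lambda_{1, \mu_j(h)-\gamma(h)}(h) = \sqrt{h}\, e^{2\phi_{\min}/h} \max_{j=1,\ldots,k} f_{1,h}\bigl(\mu_j(h)-\gamma(h)\bigr).
\]
Since $\gamma(h) \in [0,1]$ and the $\mu_j(h)$ are uniformly bounded, the arguments $\mu_j(h) - \gamma(h)$ remain inside a fixed compact $K \subset \mathbb{R}$, on which Proposition \ref{propo.CVU.coercivite}\eqref{pt1.propCVU_coercivite} provides $f_{1,h} \to f$ uniformly as $h \to 0$. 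Consequently,
\[
\max_{j=1,\ldots,k} f_{1,h}\bigl(\mu_j(h)-\gamma(h)\bigr) = \max_{j=1,\ldots,k} f\bigl(\mu_j(h)-\gamma(h)\bigr) + o(1) = \alpha_k(h) + o(1),
\]
and since $\alpha_k(h) \geq c > 0$ by Remark \ref{remark.min.f}, the additive $o(1)$ error is absorbed as a multiplicative factor $(1 + o_{h \to 0}(1))$, giving the claimed bound.

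I do not expect a major obstacle: the essential ingredients---the fiber decomposition of Section \ref{subsection.fibration_pauli} and the uniform convergence on compacts---have already been established. The only thing to verify with some care is the orthogonality and diagonal action of $\mathcal{L}_h^-$ on $V^h$, which is built into the fibration, and the final step of turning an additive error into a multiplicative one, which hinges on the uniform positive lower bound on $\alpha_k(h)$ provided by Remark \ref{remark.min.f}.
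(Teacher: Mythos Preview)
Your proof is correct and follows essentially the same approach as the paper's: both use the minimizing tuple $V_k(h)$ from Lemma~\ref{lem.deflimf}, bound $\lambda_k(h)$ by $\max_{\mu\in V_k(h)}\lambda_{1,\mu-\gamma(h)}(h)$ via the fiber decomposition, and then invoke the uniform convergence $f_{1,h}\to f$ on compacts together with the boundedness of the $\mu_j(h)$. Your write-up is simply more explicit about the construction of the test subspace and about converting the additive $o(1)$ into a multiplicative $(1+o(1))$ via Remark~\ref{remark.min.f}, both of which the paper leaves implicit.
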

    \begin{proof}
Let $k\geq 1$ and $h>0$. We have
\[
\lambda_{k}(h) \leq \max_{\mu\in V_k(h)}\lambda_{1,\,\mu-\gamma(h)}(h) = \max_{\mu\in V_k(h)}f ( \mu -\gamma(h) ) \sqrt{h} \, e^{2 \phi_{\min} / h}.
\]
Proposition \ref{propo.CVU.coercivite} (\ref{pt1.propCVU_coercivite}) and the boundedness of $h\mapsto \mu_k(h)$ (Lemma \ref{lem.deflimf}) give the conclusion.
    \end{proof}
    \begin{lemma}\label{lem.deflimf1h}
Let $h>0$, $k\in\NN^*$, consider the non-decreasing sequence
\begin{equation}\label{eq.minmaxf1h}
\beta_k(h) = \inf_{\tiny\begin{array}{c}V\subset\ZZ\\\# V = k\end{array}}\sup_{m\in V}f_{1,h}(m-\gamma(h))\,.
\end{equation}
Assume that there exists $h_0>0$ such that
\[
\sup_{h\in]0,h_0]}\beta_k(h)<+\infty.
\]
Then, for all $h\in]0,h_0]$, there exists a $k$-tuple
\[
W_k(h) = \{m_1(h),\dots,m_k(h)\}\subset\mathbb{Z}
\]
which realizes the infimum. Moreover, the functions $h\mapsto m_j(h)$ are bounded for $j\in\{1,\dots,k\}$.
    \end{lemma}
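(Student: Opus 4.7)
The plan is to imitate the proof of Lemma \ref{lem.deflimf}, with the key difference that the function $f$ is now replaced by the $h$-dependent family $f_{1,h}$. Since we can no longer rely on coercivity of a single function, the substitute is provided by part \ref{pt2.propCVU_coercivite} of Proposition \ref{propo.CVU.coercivite}, which provides a single coercive function $g$ that is a lower bound for $f_{1,h}$ on sublevel sets, \emph{uniformly} in $h\in]0,h_0]$. This is the essential ingredient; once it is in hand, the argument is elementary because the infimum is taken over $k$-tuples of integers, so boundedness immediately yields finitely many candidates.

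Concretely, set $K=\sup_{h\in]0,h_0]}\beta_k(h)+1$, which is finite by assumption, and fix $h\in]0,h_0]$. Take a minimizing sequence $(W_j)_{j\geq 1}$ of $k$-tuples in $\ZZ$ for the problem defining $\beta_k(h)$. Up to discarding the first terms, we may assume
\[
\sup_{m\in W_j} f_{1,h}(m-\gamma(h))\leq K\qquad\text{for every }j\geq 1.
\]
Applying Proposition \ref{propo.CVU.coercivite} (\ref{pt2.propCVU_coercivite}) with this $K$ and $h_0$, we obtain a coercive function $g\colon\RR\to\RR^+$, independent of $h$ and of $j$, such that $g(m-\gamma(h))\leq f_{1,h}(m-\gamma(h))\leq K$ for every $m\in W_j$. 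By coercivity of $g$ and the fact that $\gamma(h)\in[0,1]$, the integers appearing in the $W_j$ all lie in a fixed bounded subset of $\ZZ$, hence in a finite set. Therefore only finitely many distinct $k$-tuples occur along $(W_j)$, and a constant subsequence realizes the infimum: the $k$-tuple $W_k(h)=\{m_1(h),\dots,m_k(h)\}$ exists.

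For the second assertion, apply the same bound: for every $h\in]0,h_0]$ and every $j\in\{1,\dots,k\}$,
\[
g(m_j(h)-\gamma(h))\leq f_{1,h}(m_j(h)-\gamma(h))\leq \beta_k(h)\leq K,
\]
so the real numbers $m_j(h)-\gamma(h)$ remain in a fixed compact set. Since $\gamma(h)\in[0,1]$, the sequence $h\mapsto m_j(h)$ is bounded uniformly in $h\in]0,h_0]$, which concludes.

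The only genuine difficulty in this lemma is the uniform-in-$h$ lower bound by a single coercive function, but that has been isolated and proved separately as Proposition \ref{propo.CVU.coercivite} (\ref{pt2.propCVU_coercivite}); the present lemma is just the combinatorial consequence that, on the integers, a uniform coercive bound together with a uniform upper bound localizes the minimizers in a finite set.
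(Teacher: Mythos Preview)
Your proof is correct and follows essentially the same approach as the paper: both take an approximating sequence of $k$-tuples, use the uniform upper bound together with Proposition~\ref{propo.CVU.coercivite}~(\ref{pt2.propCVU_coercivite}) to obtain a single coercive lower bound $g$, and then exploit the fact that $\gamma(h)\in[0,1]$ to confine all candidate integers to a fixed finite set, from which the minimizer is extracted. The paper phrases the approximating family as $X_\varepsilon(h)$ with $\varepsilon\in]0,1]$ rather than a sequence indexed by $j$, but the argument is the same.
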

    \begin{proof}
Let $k\in \NN^*$ fixed. There exists $h_0 >0$ such that
$$
K = \sup_{h \in ]0, h_{0}]} \beta_k (h) < + \infty
$$
\begin{enumerate}[label = \roman*)]
\item \label{point_i_minmanf1h} For all $\varepsilon \in ] 0 , 1 ]$ and for all $h \in ]0, h_0]$, there exists $X_\varepsilon (h) \subset \ZZ$ such that $\sharp X_\varepsilon (h) = k$ and
\begin{equation*}
\beta_k (h) \leq \max_{m \in X_{\varepsilon} (h)} f_{1,h} (m - \gamma (h)) \leq \beta_k (h) + \varepsilon \leq K + 1 \, .
\end{equation*}
Thus, for all $h \in ]0, h_0]$ and for all $m \in \bigcup_{(h , \varepsilon ) \in ]0 , h_{0}] \times ]0,1]} X_\varepsilon (h)$, we have
$$
f_{1,h} (m - \gamma (h)) \leq K +1 \, .
$$
Proposition \ref{propo.CVU.coercivite} (\ref{pt2.propCVU_coercivite}) ensures that there exists $g : \RR \rightarrow \RR^+ $ coercive such that for all $h \in ]0, h_0]$ and for all $m \in \bigcup_{(h , \varepsilon ) \in ]0 , h_{0}] \times ]0,1]} X_\varepsilon (h)$
\begin{equation*}
g(m- \gamma(h)) \leq f_{1,h} (m - \gamma (h)) \, .
\end{equation*}
Thus there exist $M_k \in \NN^*$ independent of $h$ and $\varepsilon$ such that for any $\varepsilon \in ] 0 , 1 ]$ and for any $h \in ]0, h_0]$, $X_\varepsilon (h) \subset [ - M_k, M_k ]$.

\item Let us consider for all $h \in ]0 , h_0]$, $\autopar{ X_{1/n} (h)}_{n \in \NN^{*}}$ the minimizing sequence of \eqref{eq.minmaxf1h}. From the point \ref{point_i_minmanf1h}, the $k$-tuple $X_{1/n} (h)$ are uniformly bounded with respect to $n$ and $h$. Consequently there exists a $k$-tuple, bounded in $h$, which realizes the infimum.  
\end{enumerate}
In the same way as for $\left( \alpha_k (h) \right)_{k\geq 1}$, the sequence $\left( \beta_k (h) \right)_{k\geq 1}$ is non-decreasing, see the proof of Lemma \ref{lem.deflimf}.
    \end{proof}
    \begin{lemma} \label{lemme:j=1}
Let $k\in \NN^* \,,$ there exists $h_0 >0\,,$ such that
\[
\sup_{h\in]0,h_0]}\beta_k(h)<+\infty ,
\]
and for all $h \in ]0,h_0]\, ,$ 
$$
\lambda_k (h) = \max_{m\in W_k(h)}\lambda_{1, m - \gamma(h)} (h) \, .
$$
    \end{lemma}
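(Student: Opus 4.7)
The plan is to combine the upper bound of Proposition~\ref{proposition.borne.sup}, the fiber coercivity of Proposition~\ref{propo.CVU.coercivite}~(\ref{pt3.propCVU_coercivite}), and the existence result of Lemma~\ref{lem.deflimf1h}. First I would bound $\beta_k$. Fix $k\in\NN^*$ and let $V_k(h) = \{\mu_1(h),\dots,\mu_k(h)\}$ be the minimizing $k$-tuple produced by Lemma~\ref{lem.deflimf}. By the definition of $\beta_k(h)$,
\[
\beta_k(h)\leq \sup_{\mu\in V_k(h)} f_{1,h}(\mu-\gamma(h)).
\]
Since $h\mapsto\mu_j(h)$ is bounded (Lemma~\ref{lem.deflimf}) and $\gamma(h)\in[0,1]$, the arguments $\mu_j(h)-\gamma(h)$ all lie in a fixed compact $K\subset\RR$. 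Proposition~\ref{propo.CVU.coercivite}~(\ref{pt1.propCVU_coercivite}) then gives the uniform convergence $f_{1,h}\to f$ on $K$ as $h\to 0$, so the right-hand side is bounded on some interval $]0,h_0]$. This yields $\sup_{h\in]0,h_0]}\beta_k(h)<+\infty$, and Lemma~\ref{lem.deflimf1h} then provides $W_k(h)=\{m_1(h),\dots,m_k(h)\}\subset\ZZ$ realizing this infimum with entries bounded uniformly in $h$.

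Next I would argue that, for $h$ small enough, the $k$ smallest eigenvalues of $\cL^-_h$ come only from the ground states of the fibers, i.e.\ from the indices $j=1$. By Proposition~\ref{propo.CVU.coercivite}~(\ref{pt3.propCVU_coercivite}), for every $j\geq 2$ and $m\in\ZZ$,
\[
\lambda_{j,\,m-\gamma(h)}(h)=f_{j,h}(m-\gamma(h))\,\sqrt{h}\,e^{2\phi_{\min}/h}\geq 2B_0\,h.
\]
On the other hand, Proposition~\ref{proposition.borne.sup} together with the boundedness of $\alpha_k$ (Lemma~\ref{lem.deflimf}) gives $\lambda_k(h)=O\!\autopar{\sqrt{h}\,e^{2\phi_{\min}/h}}$, which is $o(h)$ because $\phi_{\min}<0$. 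Hence, reducing $h_0$ if necessary, for every $h\in]0,h_0]$, $j\geq 2$ and $m\in\ZZ$, one has $\lambda_k(h)<2B_0h\leq\lambda_{j,\,m-\gamma(h)}(h)$.

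Finally, combining the fibration \eqref{eq.spectre_de_l_moins} with the previous comparison, the $k$ smallest elements of $\mathrm{Sp}(\cL^-_h)$ (counted with multiplicity via the pairs $(j,m)$) all correspond to $j=1$. Therefore $\lambda_k(h)$ coincides with the $k$-th smallest element of $\{\lambda_{1,\,m-\gamma(h)}(h):m\in\ZZ\}$ counted with multiplicity in $m$; equivalently,
\[
\lambda_k(h)=\inf_{\substack{V\subset\ZZ\\ \#V=k}}\,\max_{m\in V}\lambda_{1,\,m-\gamma(h)}(h).
\]
Since $\lambda_{1,\,m-\gamma(h)}(h)=f_{1,h}(m-\gamma(h))\,\sqrt{h}\,e^{2\phi_{\min}/h}$ and the positive factor $\sqrt{h}\,e^{2\phi_{\min}/h}$ is independent of $m$, minimizing the above max over $k$-subsets of $\ZZ$ is equivalent to minimizing the corresponding max of $f_{1,h}(\cdot-\gamma(h))$, which is realized precisely on $W_k(h)$ by Step~1. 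Hence $\lambda_k(h)=\max_{m\in W_k(h)}\lambda_{1,\,m-\gamma(h)}(h)$.

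The main obstacle is the comparison in the second paragraph: one must really know that $\lambda_k(h)$ has the exponentially small order $\sqrt{h}\,e^{2\phi_{\min}/h}$ (from Proposition~\ref{proposition.borne.sup}) and that the excited fiber eigenvalues are bounded below by $2B_0 h$ (from Proposition~\ref{propo.CVU.coercivite}~(\ref{pt3.propCVU_coercivite})). Once this separation of scales is granted, the rest is an essentially routine extraction of bounded minimizers from abstract min-max formulas.
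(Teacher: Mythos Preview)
Your proof is correct and follows essentially the same route as the paper: separate the scales using Proposition~\ref{proposition.borne.sup} and Proposition~\ref{propo.CVU.coercivite}~(\ref{pt3.propCVU_coercivite}) to force $j=1$, then recognize the resulting min--max over $\ZZ$ as $\beta_k(h)$ and invoke Lemma~\ref{lem.deflimf1h}. The only minor difference is the order: you bound $\beta_k(h)$ first by evaluating $f_{1,h}$ on the compact set containing the $\mu_j(h)-\gamma(h)$ and using the uniform convergence of Proposition~\ref{propo.CVU.coercivite}~(\ref{pt1.propCVU_coercivite}), whereas the paper deduces it last from the identity $\lambda_k(h)=\beta_k(h)\sqrt{h}\,e^{2\phi_{\min}/h}$ together with Proposition~\ref{proposition.borne.sup}.
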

    \begin{proof}
Let $k\in \NN^*$. 
According to \eqref{eq.spectre_de_l_moins}, there exists $n_k (h) \in \ZZ\,,j_k (h) \in \NN^*$ such that
$$
\lambda_k (h) = \lambda_{j_{k} (h), n_{k} (h) - \gamma (h)} (h) \,.
$$
However, Proposition \ref{propo.CVU.coercivite} (\ref{pt3.propCVU_coercivite}) ensures that for all $h>0$, $j\geq 2$ and $m\in \RR$, 
\[
\lambda_{j,m} (h) \geq 2B_0 h\,.
\]
By using Proposition \ref{proposition.borne.sup} and the boundedness of $h\mapsto \alpha_k(h)$ (Lemma \ref{lem.deflimf}), we have the existence of $h_0>0$ such that for all $h \in ]0, h_0]$, $j_k(h)=1$ then
$$
\lambda_k (h) = \inf_{\tiny\begin{array}{c}V\subset\ZZ\\\# V = k\end{array}}\sup_{m\in V}\lambda_{1,m-\gamma(h)} (h) = \beta_k(h) \sqrt{h} \, e^{2 \phi_{\min} / h} .
$$
Finally, by using Proposition \ref{proposition.borne.sup} and Lemma \ref{lem.deflimf1h}, we have
\[
\sup_{h\in]0,h_0]}\beta_k(h)<+\infty \; \; \text{and} \; \; \lambda_k (h) = \max_{m\in W_k(h)}\lambda_{1, m - \gamma(h)} (h) \, .
\]
    \end{proof}
    \begin{proof}[Proof of Theorem \ref{theorem.principal}] ~~ \\
    \label{demonstration_theoreme}
Let $k \in \NN^*$. From Lemma \ref{lemme:j=1}, we have for all $h \in ]0,h_0]$, 
\begin{equation} \label{eq.proofthm.1}
    \lambda_k (h) = \max_{m\in W_{k}(h)}\lambda_{1, m - \gamma (h)} (h) = \max_{m\in W_{k}(h)}f_{1,h} (m - \gamma(h)) \sqrt{h} \; e^{2\phi_{\min}/h} \,.
\end{equation}
Moreover, Lemma \ref{lem.deflimf1h} states that for any $j \in \llc 1, k \rrc \, ,$
the functions $h \mapsto m_j (h) - \gamma (h)$ are uniformly bounded.\\
Proposition \ref{propo.CVU.coercivite} (\ref{pt1.propCVU_coercivite}) and Remark \ref{remark.min.f} ensures that 

\begin{equation} \label{eq.proofthm.2}
\begin{split}
 \max_{m\in W_k(h)}f_{1,h} (m - \gamma (h)) 
 &=   (1 + o_{h \rightarrow 0} (1))\max_{m\in W_k(h)}f (m - \gamma (h))
 \\
 &\geq (1 + o_{h \rightarrow 0} (1))\alpha_k(h) \,.    
\end{split}
\end{equation}
Using \eqref{eq.proofthm.1} and \eqref{eq.proofthm.2}, we have the following lower bound
$$
\lambda_k (h) \geq \alpha_k(h) \sqrt{h} \; e^{2\phi_{\min}/h} (1 + o_{h \rightarrow 0} (1))\,.
$$
The upper bound of Proposition \ref{proposition.borne.sup} completes the proof.
    \end{proof}
\section{Elliptic estimates} \label{section.estim.elliptiques}
In this section we give elliptic estimates related to the Dirac operator with real fixed angular momentum, see Definition \ref{definition.extension.op}. These results complete the study realized in \cite{barbaroux:hal-01889492} and make it possible to establish, for example, Proposition \ref{propo.CVU.coercivite} (\ref{pt3.propCVU_coercivite}) (cf. Corollary \ref{corollaire_demo_pt3_propo}).
        \subsection{Ellipticity results on the Dirac operator}
The main result of this section is Proposition \ref{proposition_inegalite_ellipticite}. It is one of the main ingredients to prove a monomial approximation of the associated eigenfunctions (in a neighborhood of $\Omega$), see Proposition \ref{propo:approx_holo}. 
    \begin{notation} \label{notation.Idelta}
Let us denote for $\delta \in ( 0, \delta_0) $
$$
\cI_\delta \, =\, [\rho_1 + \delta , \rho_2 - \delta] \, ,
$$
with $\delta_0 = \min \left( (r_{\min} - \rho_1) /2 ,( \rho_2 - r_{\min} ) / 2 \right)$.
    \end{notation}
    \begin{proposition} \label{proposition_inegalite_ellipticite}
There exist $\delta_0 , h_0 >0$ and $C>0$ such that for all $\delta \in ]0, \delta_0]$, for all $h_0 \in ]0, h_0 ]$ and for all $u\in \mathrm{Dom} \left( d_{h,m}^\times \right) \cap \mathrm{ker} \left( d_{h,m}^\times \right)^\perp$
\begin{gather*} 
\left\|d_{h,m}^\times u\right\|_{L^{2} (\cI_{\delta})}^2 \geq 2 h B_0 \left\|u\right\|_{L^{2} (\cI_{\delta})}^2 \\
\|d_{h,m}^\times u\|_{L^{2} (\cI_{\delta})}^2 \geq  C h^{3/2} \left( \partial_n \phi (\rho_1) \left| u \autopar{\rho_1 + \delta} \right|^2 + \partial_n \phi (\rho_2 ) \left| u \autopar{\rho_2 - \delta} \right|^2 \right) \,,
\end{gather*} 
with $\cI_\delta$ defined in Notation \ref{notation.Idelta}.
    \end{proposition}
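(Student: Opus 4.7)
\emph{Strategy and setup.} I would reduce both inequalities to weighted one-dimensional estimates with respect to the semiclassically concentrated weight $V(r) := e^{-2\phi(r)/h}r^{2m+1}$. By Proposition~\ref{propo.Dirac}, $u\in\mathrm{Dom}(d_{h,m}^\times)\cap\ker(d_{h,m}^\times)^\perp$ amounts to $u\in H^1([\rho_1,\rho_2],\CC)$ with $\int_{\rho_1}^{\rho_2} u(r)\, e^{-\phi(r)/h} r^{m+1/2}\,dr = 0$. Writing $u = e^{-\phi/h}r^{m+1/2}v$, a direct computation (as in the derivation of $\ker(d_{h,m}^\times)$) gives $d_{h,m}^\times u = -ih\, e^{-\phi/h}r^{m+1/2}\,v'$, whence
\[
\|d_{h,m}^\times u\|^2_{L^2(\cI_\delta)} = h^2\int_{\cI_\delta} V|v'|^2\,dr,\qquad \|u\|^2_{L^2(\cI_\delta)} = \int_{\cI_\delta} V|v|^2\,dr,
\]
and the orthogonality reads $\int_{\rho_1}^{\rho_2} Vv\,dr = 0$. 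Since $\phi$ has a unique interior minimum at $r_{\min}$ with $\phi''(r_{\min})\geq B_0>0$ (Remark~\ref{remark.phimin}), Laplace's method yields $\int V \sim \sqrt{\pi h/\phi''(r_{\min})}\,V(r_{\min})$ and $\int_{[\rho_1,\rho_2]\setminus\cI_\delta} V \leq e^{-c/h}V(r_{\min})$ for some $c=c(\delta)>0$, so the global orthogonality is, up to an exponentially small error, the local constraint $\int_{\cI_\delta}Vv = 0$.

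\emph{First inequality.} It suffices to establish the weighted Poincar\'e inequality
\[
\int_{\cI_\delta} V|v'|^2\,dr \;\geq\; \frac{2B_0}{h}(1+o(1))\int_{\cI_\delta}V|v|^2\,dr
\]
for $v\in H^1(\cI_\delta)$ with $\int_{\cI_\delta}Vv = 0$; multiplying by $h^2$ then gives the first estimate. I would prove the Poincar\'e inequality by the unitary substitution $\tilde v = V^{1/2}v$, which conjugates the weighted Neumann Laplacian $-V^{-1}\partial_r(V\partial_r\cdot)$ into a semiclassical Schr\"odinger operator of principal symbol $-\partial_r^2 + (\phi'/h)^2 - \phi''/h$; near $r_{\min}$ this is semiclassically a harmonic oscillator of frequency $\sqrt{\phi''(r_{\min})/h}$ whose ground state corresponds to the weighted constant (eliminated by the orthogonality) and whose spectral gap is asymptotically $2\phi''(r_{\min})/h\geq 2B_0/h$.

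\emph{Second inequality.} For the boundary trace at $\rho_1+\delta$, I start from
\[
|v(\rho_1+\delta)|^2 \leq 2|v(r)|^2 + 2\left(\int_{\rho_1+\delta}^{r} V(s)^{-1}\,ds\right)\left(\int_{\rho_1+\delta}^{r} V(s)|v'(s)|^2\, ds\right),\quad r\in\cI_\delta,
\]
by the fundamental theorem of calculus and Cauchy--Schwarz. Multiplying by $V(r)$, integrating $r$ over $[\rho_1+\delta,r_{\min}]$, applying Fubini, and using $\int_{\rho_1+\delta}^{r_{\min}} V\sim \tfrac12\sqrt{\pi h/\phi''(r_{\min})}\,V(r_{\min})$ together with the boundary expansion $\phi(s)\approx -(s-\rho_1)\partial_n\phi(\rho_1)$ near $\rho_1$ (giving $\int_{\rho_1+\delta}^{r_{\min}} V^{-1}\,ds \sim \tfrac{h}{2\partial_n\phi(\rho_1)}V(\rho_1+\delta)^{-1}$), one obtains after multiplication by $\partial_n\phi(\rho_1)V(\rho_1+\delta)$
\[
\partial_n\phi(\rho_1)V(\rho_1+\delta)|v(\rho_1+\delta)|^2 \lesssim \frac{\partial_n\phi(\rho_1)V(\rho_1+\delta)}{\sqrt{h}V(r_{\min})}\int_{\cI_\delta}V|v|^2 + h\int_{\cI_\delta}V|v'|^2.
\]
The first right-hand term is exponentially small relative to $\int V|v'|^2$ (by the concentration of $V$ and the first inequality) and can be absorbed, leaving $\partial_n\phi(\rho_1)V(\rho_1+\delta)|v(\rho_1+\delta)|^2 \lesssim h\int_{\cI_\delta}V|v'|^2$. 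Multiplying by $h$ and using $h\geq h^{3/2}$ for $h\leq 1$ yields $\|d_{h,m}^\times u\|^2_{L^2(\cI_\delta)} \geq Ch^{3/2}\partial_n\phi(\rho_1)|u(\rho_1+\delta)|^2$. The symmetric argument at $\rho_2-\delta$ produces the $\partial_n\phi(\rho_2)$ term, and adding gives the second inequality.

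\emph{Main obstacle.} The delicate point is obtaining the sharp constant $2B_0/h$ in the weighted Poincar\'e inequality uniformly in $m$: the $m$-dependent shift of the maximizer of $-\phi(r)+h(m+1/2)\log r$ and the Neumann boundary contributions on $\cI_\delta$ must not degrade the asymptotic harmonic-oscillator spectral gap. The precise appearance of $\partial_n\phi$ in the trace estimate is equally subtle and requires careful matching of the Laplace-type asymptotics for $V^{-1}$ in the boundary regime, where the decay is only exponential rather than Gaussian.
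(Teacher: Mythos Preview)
Your approach is genuinely different from the paper's, and the first inequality has a real gap that you yourself flag but do not resolve.

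\textbf{What the paper does.} The paper never touches a weighted Poincar\'e inequality. The key observation is the \emph{range characterization} (Proposition~\ref{propo.Dirac}(iii)):
\[
\ker(d_{h,m}^\times)^\perp\cap\mathrm{Dom}(d_{h,m}^\times)=\{d_{h,m}w:\,w\in H_0^1\cap H^2(\cI_\delta,\CC)\}.
\]
So every admissible $u$ is $u=d_{h,m}w$ for some $w$ with \emph{Dirichlet} boundary conditions. Then $d_{h,m}^\times u=d_{h,m}^\times d_{h,m}w$, and one is reduced to estimates on the operator $d_{h,m}^\times d_{h,m}=\mathscr{M}_{h,m+1}+hB=\cL_{h,m+1}^+$. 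The bound $\|d_{h,m}^\times d_{h,m}w\|\geq\sqrt{2hB_0}\,\|d_{h,m}w\|$ of Lemma~\ref{lemme_ellipticite_spectre_ss_ensemble} gives the first inequality \emph{exactly}, for all $h\in(0,1]$ and all $m\in\RR$, with no asymptotics. The second inequality follows from the $H^2$--type elliptic estimate of Lemma~\ref{lemme_inegalite_inter} combined with the boundary identity of Lemma~\ref{lemme_inegalite_norme_bord}, both applied to $w$.

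\textbf{The gap in your first inequality.} Your substitution $u=e^{-\phi/h}r^{m+1/2}v$ satisfies $u=V^{1/2}v$, so after your unitary conjugation $\tilde v=V^{1/2}v$ you literally recover $\tilde v=u$ and the quadratic form $\|d_{h,m}^\times u\|^2$. The weighted Poincar\'e inequality you propose is therefore \emph{equivalent} to the statement you want to prove, not a reduction of it. Your harmonic--oscillator argument then yields the spectral gap $2\phi''(r_{\min})/h\,(1+o(1))=2B(r_{\min})/h\,(1+o(1))$, which does not give the exact constant $2B_0/h$: nothing in the hypotheses prevents $B(r_{\min})=B_0$, in which case the $o(1)$ correction can push you below. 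You also lose the uniformity in $m$ that the paper's purely algebraic argument gives for free, since for large $|m|$ the maximiser of $V$ drifts away from $r_{\min}$ and your Laplace asymptotics for $\int V$ and $\int V^{-1}$ break down.

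\textbf{On the second inequality.} Your trace argument via Cauchy--Schwarz is more elementary than the paper's (which passes through the full $H^2$ estimate~\eqref{propo4.inegalite}), and for $m$ in a compact set it would indeed give $\|d_{h,m}^\times u\|^2\gtrsim h\,\partial_n\phi(\rho_j)|u|^2_{\partial\cI_\delta}$, which is even stronger than the stated $h^{3/2}$. But again the Laplace estimates you invoke for $\int V^{-1}$ near $\rho_1+\delta$ depend on the maximum of $V^{-1}$ sitting at the endpoint, which fails for $m$ large negative; the paper's route through $u=d_{h,m}w$ and Lemma~\ref{lemme_inegalite_norme_bord} avoids this entirely because the $m$--dependence is absorbed into the operator coefficients and controlled by~\eqref{propo4.inegalite}.

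In short: the missing idea is to write $u=d_{h,m}w$ and exploit the exact lower bound on the ``spin--up'' operator $\cL^+_{h,m+1}=d_{h,m}^\times d_{h,m}$, rather than attacking the spectral gap of $\cL_{h,m}^-$ directly.
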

For convenience, we will use, in this section, the following convention $\| \cdot \| = \| \cdot \|_{L^{2} (\cI_{\delta})}$.

\medskip

We will need the next lemma later. Note that it implies that the spectrum of $\cL^+_{h,m}$ is a subset of $[2hB_0 , + \infty [$.
    \begin{lemma} \label{lemme_ellipticite_spectre_ss_ensemble}
Let $h\in]0,1]$, $m\in\mathbb{R}$ and $u\in H^1_0\cap H^2 \left( \cI_\delta , \CC \right)$, we have
$$
\left\|d^\times_{h,m} d_{h,m} u\right\| \geq 2 h B_0 \left\|u\right\| \; \; \; \text{and} \; \; \; \left\|d^\times_{h,m} d_{h,m} u\right\| \geq \sqrt{2 h B_0} \, \left\| d_{h,m} u \right\| \, .
$$
Moreover, the spectrum of $d^\times_{h,m} d_{h,m}$ is a subset of $[2hB_0 , + \infty [$.
    \end{lemma}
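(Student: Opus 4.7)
The plan is to reduce everything to a single quadratic-form lower bound on $H^1_0(\cI_\delta, \CC)$, namely
\[
\|d_{h,m} u\|^2 \;\geq\; 2hB_0\,\|u\|^2.
\]
This should fall out of Lemma \ref{lemme.calcul_norme2_d_h,m} (whose integration-by-parts proof is insensitive to the precise interval): writing the two identities there at indices $m$ and $m+1$ and subtracting gives
\[
\|d_{h,m} u\|^2 \;=\; \|d^\times_{h,m+1} u\|^2 \,+\, 2h\,\|\sqrt{B}\,u\|^2,
\]
and since $B\geq B_0 > 0$ on $\overline{\Omega}$ and the first term is nonnegative, the claimed bound follows at once.

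With this in hand, I would set $T = d^\times_{h,m} d_{h,m}$ and note that for $u\in H^1_0\cap H^2(\cI_\delta,\CC)$, one has $\langle Tu, u\rangle = \|d_{h,m} u\|^2$. Cauchy--Schwarz then gives
\[
\|Tu\|\,\|u\| \;\geq\; \langle Tu, u\rangle \;\geq\; 2hB_0\,\|u\|^2,
\]
which yields the first norm inequality $\|Tu\| \geq 2hB_0\,\|u\|$. For the second, the key point is that $T$ is self-adjoint and non-negative (it is the operator $\cL^+_{h,m+1}$ of Definition \ref{definition.sp_lambda_hm}). The form bound $T\geq 2hB_0$ then upgrades by functional calculus to the operator inequality $T^2 \geq 2hB_0\, T$, and evaluating on $u$ I obtain
\[
\|Tu\|^2 \;=\; \langle T^2 u, u\rangle \;\geq\; 2hB_0\,\langle Tu, u\rangle \;=\; 2hB_0\,\|d_{h,m}u\|^2,
\]
which is precisely the second inequality.

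For the spectrum statement, the self-adjointness of $T$ together with the form inequality $T \geq 2hB_0$ gives $\mathrm{Sp}(T)\subset [2hB_0,+\infty)$ immediately, by the min--max characterization of the bottom of the spectrum. The whole argument is essentially algebraic; the only real content is the identity linking $\|d_{h,m}u\|^2$ and $\|d^\times_{h,m+1}u\|^2$, already packaged into Lemma \ref{lemme.calcul_norme2_d_h,m}, so I do not anticipate any serious obstacle here.
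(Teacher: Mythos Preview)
Your proof is correct and follows essentially the same route as the paper: both obtain the form bound $\langle d_{h,m}^\times d_{h,m} u, u\rangle = \|d_{h,m}u\|^2 \geq 2hB_0\|u\|^2$ from Lemma~\ref{lemme.calcul_norme2_d_h,m} (the paper reads off the non-negativity of $\|d_{h,m+1}^\times u\|^2$, you subtract the two identities---same thing), then deduce the first norm inequality by Cauchy--Schwarz and the spectral statement by min--max. For the second inequality the paper chains $2hB_0\|d_{h,m}u\|^2 \leq 2hB_0\|u\|\,\|Tu\| \leq \|Tu\|^2$ directly, while you invoke the spectral inequality $\|Tu\|^2 \geq 2hB_0\langle Tu,u\rangle$; both are equivalent one-line consequences of $\mathrm{Sp}(T)\subset[2hB_0,\infty)$, and your phrasing via $\langle T^2u,u\rangle$ is a harmless shorthand once the spectral theorem is in play.
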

    \begin{proof}
Let $h\in]0,1]$, $m\in\mathbb{R}$ and $u\in H^1_0\cap H^2 \left( \cI_\delta , \CC \right)$. The second identity of Lemma \ref{lemme.calcul_norme2_d_h,m} ensures that
$$
h^2\left\|\left(\partial_r-\frac{1}{2r}\right)u\right\|^2
+
\left\|
\left(\frac{h(m+1)}{r}-\partial_r\phi\right)u
\right\|^2 \geq h \left\|\sqrt{B}u\right\|^2 \, .
$$
Hence
$$
\left< u, d^\times_{h,m} d_{h,m} u \right> \geq 2h B_0 \, \left\|u\right\|^2 \, ,
$$
\ie according to the min-max theorem, the spectrum of $d^\times_{h,m} d_{h,m}$ is a subset of $[2hB_0 , + \infty [$. \\
And therefore
$$
\left\| d^\times_{h,m} d_{h,m} u\right\| \geq 2 h B_0 \left\|u\right\| \, .
$$
Finally,
\begin{equation*}
2 h \, B_0 \, \left\| d_{h,m} u \right\|^2 \leq 2 h \, B_0 \,  \left< u , d^\times_{h,m} d_{h,m} u \right> \leq 2 h \, B_0 \,   \left\|u\right\| \left\| d^\times_{h,m} d_{h,m} u\right\|\leq \left\| d^\times_{h,m} d_{h,m} u\right\|^2 \, .
\end{equation*}
    \end{proof}
Proposition \ref{proposition_inegalite_ellipticite} is a consequence of the following two lemmas, see Proof \ref{proof.inegalite.elliptique}.
    \begin{lemma} \label{lemme_inegalite_inter}
Let $h\in]0,1]$, $m\in\mathbb{R}$ and $u\in H^1_0\cap H^2 \left( \cI_\delta , \CC \right)$. We have
\[
\begin{split}
\|d^\times_{h,m}d_{h,m}u\|^2
&=
\|\mathscr{M}_{h,m+1}u\|^2 + h^2\|Bu\|^2 + 2h\|\sqrt{B}\textbf{p}_{h,m+1}u\|^2  + h^3\int (-\Delta_r B)|u|^2 \diff r\,,
\\
2h\|\sqrt{B}\textbf{p}_{h,m+1}u\|^2
&=
2h^3\left\|\sqrt{B}\left(\partial_r -\frac{1}{2r}\right)u\right\|^2
+
2h\left\|\sqrt{B}\left(\frac{h(m+1)}{r}-\partial_r \phi\right)u\right\|^2\,,
\\
\|\mathscr{M}_{h,m+1}u\|^2
&=h^4\left\|\left(\partial_r^2+\frac{1}{4r^2}\right)u\right\|^2
+2h^2\left\|
\left(\frac{h(m+1)}{r}-\partial_r\phi\right)\left(\partial_r -\frac{1}{2r}\right)u
\right\|^2
\\&\quad
+h^2\int|u|^2\left(-\Delta_r\right)\left(\frac{h(m+1)}{r}-\partial_r\phi\right)^2 \diff r
\\&\quad
+\left\|
\left(
\frac{h(m+1)}{r}-\partial_r\phi
\right)^2 u
\right\|^2\,,
\end{split}
\]
where $\Delta_r = \partial_r^2+r^{-1}\partial_r$.
There exist also $h_0 > 0$ and a constant $C>0$ (independent of $h$ and $m$) such that for all $h\in (0,h_0)$ and $m\in \RR$
\begin{multline} \label{propo4.inegalite}
C\|d^\times_{h,m}d_{h,m}u\| \geq \sqrt{h} \left( \|u\| + \left\| \left(\frac{h(m+1)}{r}-\partial_r \phi\right)u\right\| \right) 
+h^2
\left\|\left(\partial_r^2+\frac{1}{4r^2}\right)u\right\|
\\
+ h \left\|
\left(\frac{h(m+1)}{r}-\partial_r\phi\right)\left(\partial_r -\frac{1}{2r}\right)u
\right\|
+ h^{3/2} \left\|\left(\partial_r -\frac{1}{2r}\right)u\right\| \,.
\end{multline}
    \end{lemma}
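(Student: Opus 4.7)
I would establish the three identities by direct algebraic expansion of the relevant squared norms, combined with one or two integrations by parts (the boundary terms vanishing since $u\in H^1_0(\cI_\delta)$). For the first identity, I start from $d^\times_{h,m}d_{h,m}=\mathscr{M}_{h,m+1}+hB$ (Proposition \ref{proposition.extension.operateur}), expand the square, and rewrite the cross term using the factorization $\mathscr{M}_{h,m+1}=\textbf{p}_{h,m+1}^\times\cdot\textbf{p}_{h,m+1}$, so that $\mathrm{Re}\langle\mathscr{M}_{h,m+1}u,Bu\rangle=\mathrm{Re}\langle\textbf{p}_{h,m+1}u,\textbf{p}_{h,m+1}(Bu)\rangle$. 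Only the radial component of $\textbf{p}_{h,m+1}$ fails to commute with the multiplication by $B(r)$, yielding $[\textbf{p}_{h,m+1},B]=-ihe_rB'(r)$; a single integration by parts then converts the commutator contribution into $-\tfrac{h^2}{2}\int\Delta_r B\,|u|^2$. The second identity is the pointwise orthogonal decomposition $|\textbf{p}_{h,m+1}u|^2 = h^2|(\partial_r-\tfrac{1}{2r})u|^2 + |Vu|^2$ with $V:=\tfrac{h(m+1)}{r}-\partial_r\phi$, integrated against $B$.

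For the third identity I would expand $\|\mathscr{M}_{h,m+1}u\|^2$ using $\mathscr{M}_{h,m+1}=-h^2(\partial_r^2+\tfrac{1}{4r^2})+V^2$, then use the algebraic identity $\partial_r^2+\tfrac{1}{4r^2}=-(\partial_r-\tfrac{1}{2r})^\times(\partial_r-\tfrac{1}{2r})$ (formal adjoints in $L^2(\mathrm{d}r)$) to convert the cross term into $2h^2\mathrm{Re}\langle(\partial_r-\tfrac{1}{2r})u,(\partial_r-\tfrac{1}{2r})(V^2u)\rangle$. Leibniz plus one further integration by parts produces the $2h^2\|V(\partial_r-\tfrac{1}{2r})u\|^2$ term together with the $-h^2\int\Delta_r V^2\,|u|^2$ remainder.

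For the inequality I would extract each of the five bounds from a distinct nonnegative piece of the first identity. The term $2h\|\sqrt B\,\textbf{p}_{h,m+1}u\|^2\geq 2hB_0\big(h^2\|(\partial_r-\tfrac{1}{2r})u\|^2+\|Vu\|^2\big)$ (via the second identity) provides $\sqrt{h}\|Vu\|$ and $h^{3/2}\|(\partial_r-\tfrac{1}{2r})u\|$; the summand $\|\mathscr{M}_{h,m+1}u\|^2$, combined with the third identity, gives $h^2\|(\partial_r^2+\tfrac{1}{4r^2})u\|$ and $h\|V(\partial_r-\tfrac{1}{2r})u\|$ once the $h^2\int(-\Delta_r V^2)|u|^2$ remainder is controlled; finally $h^2\|Bu\|^2\geq h^2B_0^2\|u\|^2$, after absorbing the subordinate $h^3\int(-\Delta_r B)|u|^2$ correction for $h\leq h_0$ small enough, yields the $\sqrt{h}\|u\|$ bound.

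The main obstacle is the $m$-dependence of the remainder $h^2\int(-\Delta_r V^2)|u|^2$: since $\|\Delta_r V^2\|_\infty$ grows like $(1+h|m|)^2$, naive absorption fails for large $|m|$. I would resolve this by a dichotomy on the size of $h|m|$: when $h|m|\leq M$ one has $\|\Delta_r V^2\|_\infty=O(1)$ and the error $O(h^2\|u\|^2)$ is absorbed by $h^2B_0^2\|u\|^2$; when $h|m|\geq M$, the pointwise lower bound $|V|\gtrsim h|m|$ on $\cI_\delta$ (since $\partial_r\phi$ is bounded while $\tfrac{h(m+1)}{r}$ is large) forces $\|Vu\|^2\gtrsim h^2m^2\|u\|^2$, so the dangerous polynomial-in-$m$ part of the remainder is absorbed into the already-controlled $\|Vu\|^2$ contribution, producing uniform-in-$m$ constants.
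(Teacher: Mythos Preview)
Your derivation of the three identities is essentially identical to the paper's: expand $d^\times_{h,m}d_{h,m}=\mathscr{M}_{h,m+1}+hB$ and $\mathscr{M}_{h,m+1}=-h^2(\partial_r^2+\tfrac{1}{4r^2})+V^2$, push the differential factor through the multiplication by $B$ (resp.\ $V^2$), and integrate the resulting commutator term by parts to produce the $-\Delta_r B$ (resp.\ $-\Delta_r V^2$) remainders. The paper writes the cross term as $\|\sqrt{B}\,\textbf{p}_{h,m+1}u\|^2+\langle\textbf{p}_{h,m+1}u,[\textbf{p}_{h,m+1},B]u\rangle$ rather than via Leibniz, but this is the same computation.

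For the inequality, the paper handles the dangerous remainder $h^2\int(-\Delta_r V^2)|u|^2$ differently: instead of your dichotomy on $h|m|$, it expands $(-\Delta_r)V^2$ explicitly as a quadratic polynomial in $h(m+1)$, bounds it below by $-D\big(\|\tfrac{h(m+1)}{r}u\|^2+\|u\|^2\big)$, and absorbs the first piece using the auxiliary estimate $\|d^\times_{h,m}d_{h,m}u\|\gtrsim h\|\tfrac{h(m+1)}{r}u\|$ (itself a consequence of $\|d^\times_{h,m}d_{h,m}u\|\gtrsim\sqrt{h}\|Vu\|$ together with Lemma~\ref{lemme_ellipticite_spectre_ss_ensemble}). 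Your dichotomy achieves the same absorption and is equally valid; the paper's route avoids case analysis at the cost of an explicit computation of $\Delta_r V^2$. One small point: in the regime $h|m|\le M$ you cannot absorb a generic $O(h^2)\|u\|^2$ error into $h^2B_0^2\|u\|^2$ alone (the implicit constant may exceed $B_0^2$); as the paper does, you should instead add a sufficiently large multiple of $\|d^\times_{h,m}d_{h,m}u\|^2$ and invoke Lemma~\ref{lemme_ellipticite_spectre_ss_ensemble}.

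A final remark: the term $h^2\|Bu\|^2$ in the first identity only yields $\|d^\times_{h,m}d_{h,m}u\|\gtrsim h\|u\|$, not $\sqrt{h}\|u\|$ as you (and the stated inequality) claim. The paper's own argument likewise produces only the $h\|u\|$ bound, which is all that is used in the subsequent Lemma~\ref{lemme_inegalite_norme_bord}; so this discrepancy is harmless but worth noting.
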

    \begin{proof}
Let $h\in]0,1]$, $m\in\mathbb{R}$ and $u\in H^1_0\cap H^2 \left( \cI_\delta , \CC \right)$. According to Proposition \ref{proposition.extension.operateur},
\begin{align}
\label{eq.extension.1}
d^\times_{h,m}d_{h,m} = \mathscr{M}_{h,m+1}+hB \,, \\
\label{eq.extension.2}
\mathscr{M}_{h,m+1}=\textbf{p}_{h,m+1}^\times\cdot\textbf{p}_{h,m+1} \, .
\end{align}
\begin{enumerate}[label=\arabic*)]
\item \label{preuve_propo8_pt1} Equation \eqref{eq.extension.1} ensures that
\begin{equation*}
    \left\|d^\times_{h,m}d_{h,m}u\right\|^2 = \left\|\mathscr{M}_{h,m+1}u\right\|^2 + 2h \mathop{\mathrm{Re}} \left<\mathscr{M}_{h,m+1}u, Bu\right> +h^2 \left\|Bu \right\|^2  \,.
\end{equation*}
However, according to \eqref{eq.extension.2},
\begin{equation*}
    \left<\mathscr{M}_{h,m+1}u, Bu\right> =  \left\|\sqrt{B} \textbf{p}_{h,m+1} u\right\|^2 + \left<\textbf{p}_{h,m+1}u, \lc \textbf{p}_{h,m+1},  B \rc u \right> \,,
 \end{equation*}
with $\lc \textbf{p}_{h,m+1},  B \rc = \textbf{p}_{h,m+1} \, B  - B \, \textbf{p}_{h,m+1}$. \\
Let us estimate the term involving the commutator. We have
$$
\lc \textbf{p}_{h,m+1},  B \rc =  e_r \autopar{-ih \partial_r B} .
$$
Thus,
\begin{align*}
\left<\textbf{p}_{h,m+1}u, \lc \textbf{p}_{h,m+1},  B \rc u \right> &= -h^2 \left< u , \autopar{\partial_r + \frac{1}{2r}}( \partial_r B \, u) \right> \\
& = -h^2 \left<u, \partial_r B \partial_r u \right> -h^2 \left<u, \autopar{\partial_r^2 B + \frac{\partial_r B}{2r}} u \right> .
\end{align*}
By integration by parts,
\begin{equation*}
    2 \mathop{\mathrm{Re}} \left<u, \partial_r B \partial_r u \right>  = - \left<u, \partial_r^2 B  u \right>  \,. 
\end{equation*}
Therefore
\begin{equation*}
2 \mathop{\mathrm{Re}} \left<\textbf{p}_{h,m+1}u, \lc \textbf{p}_{h,m+1},  B \rc u \right> = h^2 \int (-\Delta_r B)|u|^2 \diff r \,, \; \Delta_r = \partial_r + \frac{\partial_r}{r}\, .
\end{equation*}
Let us now expand the terms $\|\sqrt{B}\textbf{p}_{h,m+1}u\|^2$ and $\left\|\mathscr{M}_{h,m+1}u\right\|^2$.\\
By definition of $\textbf{p}_{h,m+1}$ (cf. Proposition \ref{proposition.extension.operateur}), 
$$
\|\sqrt{B}\textbf{p}_{h,m+1}u\|^2
=
h^2\left\|\sqrt{B}\left(\partial_r -\frac{1}{2r}\right)u\right\|^2
+
\left\|\sqrt{B}\left(\frac{h(m+1)}{r}-\partial_r \phi\right)u\right\|^2\,.
$$
Using the definition of $\mathscr{M}_{h,m+1}$ we also have
\begin{multline*}
    \left\|\mathscr{M}_{h,m+1}u\right\|^2 = h^4 \left\|\autopar{\partial^2_r + \frac{1}{4r^2}}u\right\|^2 +  \left\|\autopar{\frac{h(m+1)}{r} - \partial_r \phi}^2 u\right\|^2+ \\ + 2h^2 \mathop{\mathrm{Re}} \left<-\autopar{\partial^2_r + \frac{1}{4r^2}}u, \autopar{\frac{h(m+1)}{r} - \partial_r \phi}^2 u\right>   \,.
\end{multline*}
Then, noticing $-\autopar{\partial^2_r + \frac{1}{4r^2}} = \autopar{-\partial_r -\frac{1}{2r} } \autopar{\partial_r - \frac{1}{2r}}$, we have
\begin{multline*}
     \left<-\autopar{\partial^2_r + \frac{1}{4r^2}}u, \autopar{\frac{h(m+1)}{r} - \partial_r \phi}^2 u\right>  = \left\|\autopar{\frac{h(m+1)}{r} - \partial_r \phi} \autopar{\partial_r - \frac{1}{2r}} u\right\|^2 \\
    + \left< \autopar{\partial_r + \frac{1}{2r}} u, \lc \autopar{\partial_r + \frac{1}{2r}}, \autopar{\frac{h(m+1)}{r} - \partial_r \phi}^2  \rc u \right>  \, .
\end{multline*}

Just as the beginning of Point \ref{preuve_propo8_pt1}, we estimate the commutator. We find
$$
\lc \autopar{\partial_r + \frac{1}{2r}}, \autopar{\frac{h(m+1)}{r} - \partial_r \phi}^2  \rc =  e_r \autopar{\partial_r \autopar{\frac{h(m+1)}{r} - \partial_r \phi}^2} .
$$
Thus
\begin{multline*}
\left<\autopar{\partial_r - \frac{1}{2r}}u, \lc \autopar{\partial_r + \frac{1}{2r}}, \autopar{\frac{h(m+1)}{r} - \partial_r \phi}^2  \rc u \right> =  \\
  - \left<u, \partial_r \autopar{\frac{h(m+1)}{r} - \partial_r \phi}^2 \partial_r u \right> - \left<u, \autopar{\partial_r^2  + \frac{\partial_r }{2r}}\autopar{\frac{h(m+1)}{r} - \partial_r \phi}^2 u \right> .
\end{multline*}
By integration by parts,
\begin{equation*}
    2 \mathop{\mathrm{Re}} \left<u, \partial_r \autopar{\frac{h(m+1)}{r} - \partial_r \phi}^2 \partial_r u \right>  = - \left<u, \partial_r^2 \autopar{\frac{h(m+1)}{r} - \partial_r \phi}^2  u \right>  \,. 
\end{equation*}
Therefore,
\begin{multline*}
2 \mathop{\mathrm{Re}} \left< \autopar{\partial_r + \frac{1}{2r}} u, \lc \autopar{\partial_r + \frac{1}{2r}}, \autopar{\frac{h(m+1)}{r} - \partial_r \phi}^2  \rc u \right> = \\ \int |u|^2 \left( -\Delta_r \right)  \autopar{\frac{h(m+1)}{r} - \partial_r \phi}^2  \diff r \,.
\end{multline*}
\item Let us prove the inequality (\ref{propo4.inegalite}).
\begin{enumerate}[label = (\roman*)]
\item \label{preuve_propo8_pt2_i} Let us start by offsetting the two negative terms identified in the previous point.\\
From Lemma \ref{lemme_ellipticite_spectre_ss_ensemble},
$$
\left\| d^\times_{h,m} d_{h,m} u\right\| \geq 2 h B_0 \left\|u\right\| \, ,
$$
then for sufficiently small $h$,
\begin{equation} \label{terme_pathologique1}
\| d^\times_{h,m} d_{h,m} u \|^2 + h^3 \int |u|^2 \left( -\Delta_r B \right) \diff r  \geq h^2 \left( C^2 - h \sup \left( \Delta_r B \right) \right) \| u \|^2 \geq 0 \, .
\end{equation}
For the second term, note that
$$
\| d^\times_{h,m} d_{h,m} u \|^2 \geq 2 h^2 \left\| \autopar{\frac{h (m+1)}{r} - \partial_r \phi} u \right\|^2 \, ,
$$
thus
\begin{equation} \label{terme_pathologique2_inter}
\| d^\times_{h,m} d_{h,m} u \| \geq C h  \left\| \frac{h (m+1)}{r} u \right\| \, .
\end{equation}
Then 
\begin{multline*}
\left( -\Delta_r \right)  \autopar{\frac{h (m+1)}{r} - \partial_r \phi}^2  = -\frac{h^2 (m+1)^2}{r^2}\autopar{1+ \frac{4}{r^2}} +  
\\
\autopar{\frac{h(m+1)}{r} + \frac{\partial_r \phi}{r^2} - \frac{\partial_r^2 \phi}{r} + \partial^3_r \phi }^2 - R_\phi \, ,
\end{multline*}
with $R_\phi = \autopar{\frac{\partial_r \phi}{r^2} - \frac{\partial_r^2 \phi}{r} + \partial^3_r \phi }^2 + \autopar{\partial_r^2 \phi}^2 + \partial_r \phi \autopar{\partial_r^2 \phi + \partial_r^3 \phi}$. \\
Therefore, there exists a constant $D>0$ independent of $h$ and $m$ such that
\begin{equation} \label{terme_pathologique2}
h^2 \int |u|^2 \left( -\Delta_r \right) \autopar{\frac{h (m+1)}{r} - \partial_r \phi}^2 \diff r \geq -Dh^2 \left( \left\| \frac{h (m+1)}{r} u \right\|^2 + \| u \|^2   \right) \, .
\end{equation}
Finally, using points (\ref{terme_pathologique2_inter}), (\ref{terme_pathologique2}) and Lemma \ref{lemme_ellipticite_spectre_ss_ensemble}, we have the existence of $C>0$ independent of $h$ and $m$ such that for $h$ sufficiently small
\begin{multline*}
\autopar{\frac{D}{C^2} +1}\| d^\times_{h,m} d_{h,m} u \|^2 + h^2 \int |u|^2 \left( -\Delta_r \right) \autopar{\frac{h (m+1)}{r} - \partial_r \phi}^2 \diff r \\ 
\geq \sqrt{h} \left( C^2 - D h \right) \| u \|^2 \geq 0 \, .
\end{multline*}

\item Point \ref{preuve_propo8_pt2_i} ensures that, modulo the modification of the constant in front of $\| d^\times_{h,m} d_{h,m} u \|$, each of the terms computed in point \ref{preuve_propo8_pt1} are positive. We can easily conclude that there exist $C>0$ independent of $h$ and $m$ such that for $h$ sufficiently small 
\begin{multline*}
C\|d^\times_{h,m}d_{h,m}u\| \geq \sqrt{h} \left( \|u\| + \left\| \left(\frac{h(m+1)}{r}-\partial_r \phi\right)u\right\| \right) 
+h^2
\left\|\left(\partial_r^2+\frac{1}{4r^2}\right)u\right\|
\\
+ h \left\|
\left(\frac{h(m+1)}{r}-\partial_r\phi\right)\left(\partial_r -\frac{1}{2r}\right)u
\right\|
+ h^{3/2} \left\|\left(\partial_r -\frac{1}{2r}\right)u\right\| \,.
\end{multline*}
\end{enumerate}
\end{enumerate}
    \end{proof}
    \begin{lemma} \label{lemme_inegalite_norme_bord}
Let $h>0$ and $m\in\mathbb{R}$. We have for all $u\in H^1_0\cap H^2 \left( \cI_\delta , \CC \right)$, 
\[\begin{split}
&{|d_{h,m}u|^2\partial_n\phi}_{|r = \rho_2} + {|d_{h,m}u|^2\partial_n\phi}_{|r = \rho_1}
\\&=
\int|d_{h,m}u|^2 \left(\partial_r^2\phi-r^{-1}\partial_r\phi\right) \diff r
+\int 2\Re \left<d_{h,m}u,\partial_r\phi\left(\partial_r+\frac{1}{2r}\right)d_{h,m}u \right> \diff r \,,
\end{split}\]
\[
\left(\partial_r+\frac{1}{2r}\right)d_{h,m}
=
-ih\left(\partial_r^2+\frac{1}{4r^2}\right)
+
\left(\frac{h(m+1)}{r}-\partial_r\phi\right)\left(\partial_r-\frac{1}{2r}\right)
-B\,.
\]
Moverover, there exist $h_0 > 0$ and a constant $C>0$ (independent of $h$ and $m$) such that for all $h\in (0,h_0)$ and $m\in \RR$
\begin{equation} \label{eq.propo.ineg.bord}
    {|d_{h,m}u|^2\partial_n\phi}_{|r = \rho_2} + {|d_{h,m}u|^2\partial_n\phi}_{|r = \rho_1}
\leq 
Ch^{-3/2}\|d_{h,m}^\times d_{h,m}u\|^2\,.
\end{equation}

    \end{lemma}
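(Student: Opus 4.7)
My plan has three steps: prove each of the two identities separately, then combine them with the estimates of Lemmas \ref{lemme_inegalite_inter} and \ref{lemme_ellipticite_spectre_ss_ensemble} to obtain \eqref{eq.propo.ineg.bord}.

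The first identity follows from the fundamental theorem of calculus applied to $r \mapsto |d_{h,m}u(r)|^2\,\partial_r\phi(r)$ on $[\rho_1,\rho_2]$. Since $\phi$ is radial with $\partial_n\phi(\rho_2) = \partial_r\phi(\rho_2)$ and $\partial_n\phi(\rho_1) = -\partial_r\phi(\rho_1)$, the boundary contribution is exactly the left-hand side of the claimed identity. Writing $v = d_{h,m}u$, the chain rule gives
\[
\partial_r(|v|^2\partial_r\phi) = |v|^2\partial_r^2\phi + 2\Re(\overline{v}\,\partial_r v)\,\partial_r\phi,
\]
and the plan is then to symmetrise using $\partial_r = (\partial_r + \tfrac{1}{2r}) - \tfrac{1}{2r}$, which turns $2\Re(\overline{v}\,\partial_r v)$ into $2\Re\bigl(\overline{v}(\partial_r + \tfrac{1}{2r})v\bigr) - r^{-1}|v|^2$. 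Integrating yields exactly the stated identity, the $-r^{-1}\partial_r\phi$ correction arising from this symmetrisation. For the second identity, I would simply expand $(\partial_r + \tfrac{1}{2r})\,d_{h,m}$ from the formula $d_{h,m} = -ih(\partial_r + \tfrac{m+1/2}{r}) + i\partial_r\phi$ given in Definition \ref{definition.extension.op} and match the resulting second-, first- and zeroth-order coefficients with those of the right-hand side; the Poisson equation \eqref{eq.phi}, which in radial form reads $B = \partial_r^2\phi + r^{-1}\partial_r\phi$, is what produces the final $-B$ term.

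For \eqref{eq.propo.ineg.bord}, I would apply Cauchy-Schwarz to the first identity to get
\[
|d_{h,m}u|^2\partial_n\phi\big|_{\rho_1} + |d_{h,m}u|^2\partial_n\phi\big|_{\rho_2} \leq C\|d_{h,m}u\|^2 + 2\|\partial_r\phi\|_\infty\,\|d_{h,m}u\|\,\bigl\|(\partial_r+\tfrac{1}{2r})d_{h,m}u\bigr\|,
\]
where $C$ bounds $|\partial_r^2\phi - r^{-1}\partial_r\phi|$ uniformly on $\cI_\delta$. The second identity together with the triangle inequality controls $\|(\partial_r+\tfrac{1}{2r})d_{h,m}u\|$ by the sum of $h\|(\partial_r^2+\tfrac{1}{4r^2})u\|$, $\|(\tfrac{h(m+1)}{r}-\partial_r\phi)(\partial_r-\tfrac{1}{2r})u\|$ and $\|B\|_\infty\|u\|$, each handled by Lemma \ref{lemme_inegalite_inter}: the first two give $O(h^{-1})\|d_{h,m}^\times d_{h,m}u\|$ and the third gives $O(h^{-1/2})\|d_{h,m}^\times d_{h,m}u\|$, so overall $\|(\partial_r+\tfrac{1}{2r})d_{h,m}u\| = O(h^{-1})\|d_{h,m}^\times d_{h,m}u\|$. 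Simultaneously Lemma \ref{lemme_ellipticite_spectre_ss_ensemble} yields $\|d_{h,m}u\| \leq (2hB_0)^{-1/2}\|d_{h,m}^\times d_{h,m}u\|$. Multiplying the two bounds produces the dominant scaling $h^{-1/2}\cdot h^{-1} = h^{-3/2}$, matching \eqref{eq.propo.ineg.bord}; the residual $C\|d_{h,m}u\|^2 = O(h^{-1})\|d_{h,m}^\times d_{h,m}u\|^2$ is absorbed.

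The only real point of care is the bookkeeping of $h$-powers: the sharp $h^{-3/2}$ depends on combining the correct exponent from Lemma \ref{lemme_inegalite_inter} for each of the three summands in the second identity (in particular the extra $h$ in front of $\partial_r^2u$ that compensates the $h^{-2}$ scaling of $\|(\partial_r^2+\tfrac{1}{4r^2})u\|$), rather than on falling back to the cruder estimate $\|u\|\lesssim h^{-1/2}\|d_{h,m}^\times d_{h,m}u\|$ alone, which would yield a weaker power of $h$ and be insufficient for the downstream use of \eqref{eq.propo.ineg.bord} in Proposition \ref{proposition_inegalite_ellipticite}.
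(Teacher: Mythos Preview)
Your proposal is correct and follows essentially the same route as the paper. The only cosmetic differences are that the paper derives the first identity by integrating $(\partial_r+\tfrac{1}{r})\bigl(\partial_r\phi\,|d_{h,m}u|^2\bigr)$ and splitting via a commutator (which amounts to your symmetrisation $\partial_r=(\partial_r+\tfrac{1}{2r})-\tfrac{1}{2r}$), and that for the bound $\|d_{h,m}u\|\leq C h^{-1/2}\|d_{h,m}^\times d_{h,m}u\|$ the paper expands $\|d_{h,m}u\|^2$ via Lemma~\ref{lemme.calcul_norme2_d_h,m} and then applies Lemma~\ref{lemme_inegalite_inter} term by term, whereas you invoke the second inequality of Lemma~\ref{lemme_ellipticite_spectre_ss_ensemble} directly, which is slightly cleaner and gives the same $h^{-1/2}$ factor.
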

\begin{proof}
Let $h\in]0,1]$, $m\in\mathbb{R}$ and $u\in H^1_0\cap H^2 \left( \cI_\delta , \CC \right)$.
\begin{enumerate}[label = (\roman*)]
\item \label{lemme5.4_pt.i} Concerning the first identity, it is enough to notice that
\begin{equation*} 
\begin{split}
 \int \autopar{\partial_r + \frac{1}{r}} \autopar{ \partial_r \phi \valabs{d_{h,m}u}^2 } &\diff r = \int \autopar{\lc \autopar{\partial_r + \frac{1}{r}} , \partial_r \phi \rc + \partial_r \phi \autopar{\partial_r  + \frac{1}{r}}}  |d_{h,m} u|^2 \diff r \\
&= \int |d_{h,m} u|^2 \partial_r^2 \phi + 2 \mathop{\mathrm{Re}} \left< d_{h,m} u , \partial_r \phi \autopar{\partial_r + \frac{1}{2r}} d_{h,m} u \right> \diff r ,
\end{split}
\end{equation*}
and 
\begin{multline*}
\int \autopar{\partial_r + \frac{1}{r}} \autopar{ \partial_r \phi \valabs{d_{h,m}u}^2 } \diff r= {|d_{h,m}u|^2\partial_n\phi}_{|r = \rho_2} + {|d_{h,m}u|^2\partial_n\phi}_{|r = \rho_1} \\ +  \int \valabs{d_{h,m}u}^2 \frac{\partial_r \phi}{r} \diff r .
\end{multline*}

\item For the second equality, we can rewrite the definition of $d_{h,m}$ as follows
$$
d_{h,m} = -ih \autopar{\partial_r - \frac{1}{2r}} -i \autopar{\frac{h(m+1)}{r} - \partial_r \phi} .
$$
By composing by $\autopar{\partial_r + 1/2r}$, we have
\begin{multline*}
\autopar{\partial_r + \frac{1}{2r}} d_{h,m} u = -ih \autopar{\partial_r^2 + \frac{1}{4r^2}}u -i \lc \autopar{\partial_r + \frac{1}{2r}} , \autopar{\frac{h(m+1)}{r} - \partial_r \phi}  \rc u \\ -i \autopar{\frac{h(m+1)}{r} - \partial_r \phi} \autopar{\partial_r + \frac{1}{2r}}u \,.
\end{multline*}
The terms of the commutator reorganize in the following way
$$
\lc \autopar{\partial_r + \frac{1}{2r}} , \autopar{\frac{h(m+1)}{r} - \partial_r \phi}  \rc u = - \autopar{\frac{h(m+1)}{r} - \partial_r \phi} \frac{u}{r} - Bu\, ,
$$
which proves the result.

\item From Lemmas \ref{lemme.calcul_norme2_d_h,m} and \ref{lemme_inegalite_inter}, we have
\[
\begin{split}
&\| d_{h,m} u \|^2 
\\
& =
h^2\left\|\left(\partial_r-\frac{1}{2r}\right)u\right\|^2
+
\left\|
\left(\frac{h(m+1)}{r}-\partial_r\phi\right)u
\right\|^2
+
h\left\|
\sqrt{B}u
\right\|^2 \leq C h^{-1} \| d^\times_{h,m} d_{h,m} u \|^2 ,
\end{split}
\]
and
\[
\begin{split}
& \left\| \left(\partial_r+\frac{1}{2r}\right)d_{h,m} u \right\| 
\\
& \leq
h \left\| \left(\partial_r^2+\frac{1}{4r^2}\right) u \right\|
+
\left\| \left(\frac{h(m+1)}{r}-\partial_r\phi\right)\left(\partial_r-\frac{1}{2r}\right) u \right\|
+ \|Bu\| \leq C h^{-1} \| d^\times_{h,m} d_{h,m} u \| .
\end{split}
\]
Thus, using the Cauchy-Schwarz inequality on point \ref{lemme5.4_pt.i}, there exists $C>0$ independent of $h$ and $m$ such that
\begin{equation*}
\begin{split}
& \int|d_{h,m}u|^2 \left(\partial_r^2\phi-r^{-1}\partial_r\phi\right) \diff r
+\int 2\Re \left<d_{h,m}u,\partial_r\phi\left(\partial_r+\frac{1}{2r}\right)d_{h,m}u \right> \diff r
\\
& \leq \left( \sup \, (\partial_r^2 \phi - r^{-1} \partial_r \phi) \right) \| d_{h,m} u \|^2  + 2 \| d_{h,m} u \| \left\| \left(\partial_r+\frac{1}{2r}\right)d_{h,m} u \right\| \leq C h^{-3/2} \| d^\times_{h,m} d_{h,m} u \|^2 \, .
\end{split}
\end{equation*}
\end{enumerate}
\end{proof}
\begin{proof}[Proof of Proposition \ref{proposition_inegalite_ellipticite}] \label{proof.inegalite.elliptique} Let $h>0$ and $m\in \RR$, recall that Proposition \ref{propo.Dirac} gives us the identity 
\begin{equation} \label{eq.proofelliptique.imker}
\mathrm{ker} \autopar{d_{h,m}^*}^{\perp} \cap \mathrm{Dom} \autopar{d_{h,m}^*} = \left\{ d_{h,m} u ; \; \; u \in H_0^1 \cap H^2 \autopar{[\rho_1, \rho_2], \CC} \right\}.    
\end{equation}
\begin{enumerate}
    \item First inequality is a direct consequence of \eqref{eq.proofelliptique.imker} and the second inequality of Lemma \ref{lemme_ellipticite_spectre_ss_ensemble}.
    \item For the second one, it is enough to use \eqref{eq.proofelliptique.imker} and \eqref{eq.propo.ineg.bord}. 
\end{enumerate}
\end{proof}
\subsection{Lower bound, for $k\geq 2$ of $\lambda_{k,m} (h)$}
Let us fix $p\in \ZZ$. We will note in this subpart $\widetilde{m} = m - \gamma_{h,p}$, with $h>0$ and $\gamma_{h,p}$ defined in Proposition \ref{propo.potentiel.flux}.

\medskip

The following proposition describes the energy levels of $\cL_{h,m}^{-} = d_{h,\widetilde{m}} d_{h,\widetilde{m}}^\times $ different from the ground state, in the semiclassical limit. 
\begin{proposition} \label{propo_minoration_haut_du_spectre}
Let $h >0$, $k\geq 2$. For all $m\in \RR$, we have 
$$
\lambda_{k,m} (h) \geq 2 h B_0 .
$$
\end{proposition}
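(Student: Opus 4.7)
The plan is to prove Proposition \ref{propo_minoration_haut_du_spectre} by exploiting the fact that $\ker(d^\times_{h,\widetilde m})$ is only one--dimensional (Proposition \ref{propo.Dirac}) together with the spectral lower bound for $d^\times_{h,\widetilde m}d_{h,\widetilde m}$ obtained in Lemma \ref{lemme_ellipticite_spectre_ss_ensemble}. In short, only the ground state of $\cL^-_{h,m}=d_{h,\widetilde m}d^\times_{h,\widetilde m}$ can escape the bound $2hB_0$, because every other energy level must be realised by a Rayleigh quotient on a vector orthogonal to that one-dimensional kernel, on which the ellipticity of $d^\times_{h,\widetilde m}d_{h,\widetilde m}$ is available.

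First, I would write down the min-max characterisation already recorded after Definition \ref{definition.sp_lambda_hm}:
\[
\lambda_{k,m}(h)=\inf_{\substack{V\subset \mathrm{Dom}(d^\times_{h,\widetilde m})\\\dim V=k}}\sup_{v\in V\setminus\{0\}}\frac{\|d^\times_{h,\widetilde m}v\|^2}{\|v\|^2}.
\]
Set $K=\ker(d^\times_{h,\widetilde m})$. By Proposition \ref{propo.Dirac}(ii), $\dim K=1$, and by Proposition \ref{propo.Dirac}(iii),
\[
K^\perp\cap \mathrm{Dom}(d^\times_{h,\widetilde m})=\bigl\{d_{h,\widetilde m}w\ :\ w\in H^1_0\cap H^2([\rho_1,\rho_2],\CC)\bigr\}.
\]

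Next, I would fix an admissible trial subspace $V$ of dimension $k\geq 2$. Since $K$ is one-dimensional, the subspace $V\cap K^\perp$ has dimension at least $k-1\geq 1$, so I can pick a nonzero $v\in V\cap K^\perp$, and Proposition \ref{propo.Dirac}(iii) provides $w\in H^1_0\cap H^2$ such that $v=d_{h,\widetilde m}w$. Then using Lemma \ref{lemme_ellipticite_spectre_ss_ensemble} (the second inequality, $\|d^\times_{h,m}d_{h,m}u\|\geq \sqrt{2hB_0}\|d_{h,m}u\|$) applied to $w$,
\[
\|d^\times_{h,\widetilde m}v\|^2=\|d^\times_{h,\widetilde m}d_{h,\widetilde m}w\|^2\geq 2hB_0\,\|d_{h,\widetilde m}w\|^2=2hB_0\,\|v\|^2.
\]
Consequently $\sup_{u\in V\setminus\{0\}}\|d^\times_{h,\widetilde m}u\|^2/\|u\|^2\geq 2hB_0$, and taking the infimum over all $k$-dimensional subspaces $V$ yields $\lambda_{k,m}(h)\geq 2hB_0$, as desired.

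The only step requiring any care is the invocation of Proposition \ref{propo.Dirac}(iii): one must be sure that the vector $v\in V\cap K^\perp$ really does lie in the image of $d_{h,\widetilde m}$ restricted to $H^1_0\cap H^2$, so that $w$ is in the domain of $d^\times_{h,\widetilde m}d_{h,\widetilde m}$ where Lemma \ref{lemme_ellipticite_spectre_ss_ensemble} applies. This is exactly the content of that proposition, so no new work is needed. The argument does not rely on any semiclassical smallness of $h$: the bound $2hB_0$ is the same uniform lower bound coming from the pointwise Pauli gap $hB\geq hB_0$ that already appeared in Lemma \ref{lemme.calcul_norme2_d_h,m}.
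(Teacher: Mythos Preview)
Your argument is correct. The only cosmetic point is that Lemma \ref{lemme_ellipticite_spectre_ss_ensemble} is stated on $\cI_\delta$ rather than on $[\rho_1,\rho_2]$, but its proof only uses Lemma \ref{lemme.calcul_norme2_d_h,m} (stated on the full interval) and carries over verbatim; the paper's own proof relies on the same extension implicitly.

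Your route is a mild variant of the paper's. The paper first uses Dirichlet--Neumann bracketing, $\lambda_{k,m}(h)\geq \nu_{k,m}(h)$, observes that the Neumann ground state energy is $0$ with one-dimensional kernel $K=\ker(d^\times_{h,m})$, so that for $k\geq 2$ the Neumann eigenfunctions lie in $K^\perp$, and then applies the ellipticity bound on $K^\perp$. You bypass the Neumann comparison: working directly with the Dirichlet min-max, you use the one-dimensionality of $K$ to produce, in any $k$-dimensional trial space $V\subset H^1_0$, a nonzero vector in $V\cap K^\perp$, and then invoke Proposition \ref{propo.Dirac}(iii) together with Lemma \ref{lemme_ellipticite_spectre_ss_ensemble}. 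Both approaches rest on the same two ingredients; yours is slightly more direct, while the paper's makes the role of the Neumann problem (and hence of the kernel function $e^{-\phi/h}r^{m+1/2}$ as a genuine zero mode) more explicit.
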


\begin{proof}
Let $k\geq 2$, $h >0$ and $m\in \RR$.\\
The min-max theorem allows to order the eigenvalues of the Pauli operator with Dirichlet and Neumann boundary conditions,
\begin{equation*}
    \lambda_{k,m} (h) = \underset{\underset{\mathrm{dim}V = k}{V \subset H^{1}_{0}}}{\mathrm{inf}} \; \underset{u\in V\backslash \{ 0 \} }{\mathrm{sup}} \; \frac{\norm{ d^\times_{h,m} u }^2}{\norm{u}^2 } \; \geq  \underset{\underset{\mathrm{dim}V = k}{V \subset H^{1}}}{\mathrm{inf}} \; \underset{u\in V\backslash \{ 0 \} }{\mathrm{sup}} \; \frac{\norm{d^\times_{h,m} u }^2}{\norm{u}^2} \; = \nu_{k,m} (h),
\end{equation*}
with $\nu_{k,m} (h)$ the $k$-th eigenvalue of $\cL^{N}_{h,m}$, the operator $\cL_{h,m}^-$ with homogeneous Neumann boundary conditions. \\
Observe that the first eigenvalue of the Pauli operator with Neumann conditions is zero, indeed, the function $r \mapsto r^{m +1/2}$ cancels the quadratic form. This implies that for $k \geq 2$,
$$
\mathrm{ker}\autopar{\cL^{N}_{h,m} - \nu_{k,m} (h)} \subset \mathrm{ker}\autopar{\cL^{N}_{h,m} }^{\perp} =  \mathrm{ker} \autopar{\diff_{h,m}^\times}^{\perp}.
$$
However, according to Proposition \ref{proposition_inegalite_ellipticite}, for all $u \in \mathrm{Dom} \autopar{\diff_{h,m}} \cap \mathrm{ker} \autopar{\diff_{h,m}^\times}^{\perp}$,
$$
\left\|\diff_{h,m}^\times u\right\|^2 \geq  2 h B_0  \left\| u\right\|^2 .
$$
Thus, for $k \geq 2$ and $u\in \mathrm{ker}\autopar{\cL^{N}_{h,m} - \nu_{k,m} (h)}$
$$
\nu_k (h) = \frac{\norm{d^\times_{h,m} u }^2}{\norm{u}^2} \; \geq  2 h B_0 .
$$
This completes the proof.
\end{proof}

We directly deduce from Proposition \ref{propo_minoration_haut_du_spectre} and Proposition \ref{propo.CVU.coercivite} (\ref{pt3.propCVU_coercivite}).

\begin{corollary} \label{corollaire_demo_pt3_propo}
For all $h>0$, $j\geq 2$ and $m\in \RR$, we have
\[
f_{j,h}(m)\geq 2B_0 \sqrt{h}e^{-2\phi{\rm min}/h}\,.
\]
\end{corollary}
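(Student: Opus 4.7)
The corollary is an immediate consequence of Proposition \ref{propo_minoration_haut_du_spectre}, so there is essentially no work left: the plan is just to unpack the definition of $f_{j,h}$ and insert the lower bound on $\lambda_{j,m}(h)$.

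Concretely, I would fix $h>0$, $j\geq 2$ and $m\in\RR$, and recall from the notation introduced just before Proposition \ref{propo.CVU.coercivite} that
\[
f_{j,h}(m) = \frac{\lambda_{j,m}(h)}{\sqrt{h}}\, e^{-2\phi_{\min}/h}\,.
\]
Proposition \ref{propo_minoration_haut_du_spectre} gives, uniformly in $m\in\RR$ and for all $j\geq 2$, the bound $\lambda_{j,m}(h)\geq 2hB_0$. Dividing by $\sqrt{h}$ and multiplying by $e^{-2\phi_{\min}/h}$ yields
\[
f_{j,h}(m) \geq \frac{2hB_0}{\sqrt{h}}\, e^{-2\phi_{\min}/h} = 2B_0 \sqrt{h}\, e^{-2\phi_{\min}/h}\,,
\]
which is the claim.

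There is no genuine obstacle: all the substance lies in Proposition \ref{propo_minoration_haut_du_spectre}, whose proof relied on the min-max characterization comparing Dirichlet eigenvalues to Neumann eigenvalues, the fact that the Neumann ground state is $r\mapsto r^{m+1/2}$ (so its eigenvalue is zero), and the ellipticity estimate of Proposition \ref{proposition_inegalite_ellipticite} applied on the orthogonal complement of $\ker(d_{h,m}^\times)$. The corollary only records the rescaled version of that statement, which is what is needed to match the hypothesis in Proposition \ref{propo.CVU.coercivite}(\ref{pt3.propCVU_coercivite}).
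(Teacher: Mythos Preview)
Your proof is correct and matches the paper's approach exactly: the paper states that the corollary is directly deduced from Proposition~\ref{propo_minoration_haut_du_spectre}, and you have simply spelled out that deduction by inserting $\lambda_{j,m}(h)\geq 2hB_0$ into the definition of $f_{j,h}(m)$.
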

\section{Upper bound and consequences} 
\label{sec:majoration}
In this section we establish an upper bound for the set of ground states of the operators $\cL_{h,m}^-$. This result is a consequence of Lemmas \ref{lem:majoration_numerateur} and \ref{lem:minoration_denominateur}. We then show in Section \ref{subsubsection:localisation} that, from the upper bound, the eigenfunctions are localized in the neighborhood of $\phi$ minimum. Then, by combining the upper bound and the elliptic estimates of Section \ref{section.estim.elliptiques} we get a monomial approximation of the eigenfunctions inside the domain, see Proposition \ref{propo:approx_holo}. 
\subsection{Upper bound} \label{subsection_borne_sup}
In this subsection, we seek to obtain for any $m \in \RR$ an upper bound for $\lambda_{1,m} (h)$. More precisely we will establish the following proposition. 
\begin{proposition} \label{propo.borne_sup_lambda1m}
Let $\alpha \in ]1/2, 1[$ and $\beta \in ]1/3, 1/2[$. There exists $h_0 > 0$ such that for all $h \in ]0, h_0]$ and for all $m \in \RR$, we have
$$
\lambda_{1,m} (h) \leq C_{m} (h) \; \sqrt{h} \; e^{\frac{2 \phi_{\min}}{h}} \; (1+o_{h \rightarrow 0}(1)).
$$
with 
\begin{multline*}
C_m (h) = 2 \sqrt{\frac{\phi''_{\min}}{\pi}} \left( \partial_n \phi \autopar{\rho_1} \autopar{ \frac{ \rho_1 + h^\alpha \mathds{1}_{m \geq 0}  }{ r_{\min} - \mathrm{sgn}\autopar{2m+1} h^\beta }  }^{2m+1}\right. \\
+ \left. \partial_n \phi \autopar{\rho_2} \autopar{ \frac{ \rho_2 - h^\alpha \mathds{1}_{m< 0}   }{ r_{\min} - \mathrm{sgn}\autopar{2m+1} h^\beta }  }^{2m+1} \right) .
\end{multline*}
\end{proposition}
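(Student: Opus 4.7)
The plan is to apply the min-max characterization of Proposition \ref{corollaire.minmax.mom_reel} with a carefully chosen test function. The key observation is that $(\partial_r-(m+1/2)/r)\,r^{m+1/2}=0$, so I would take $v(r)=\chi(r)\,r^{m+1/2}$ for some $\chi\in H_0^1([\rho_1,\rho_2],\RR)$. The numerator of the weighted Rayleigh quotient then reduces to $h^2\int e^{-2\phi/h}\,r^{2m+1}\,|\chi'(r)|^2\,dr$ and is supported wherever $\chi'\neq 0$. I will take $\chi\equiv 1$ on $[\rho_1+h^\alpha,\rho_2-h^\alpha]$, and on each boundary layer of width $h^\alpha$ define $\chi$ as the minimizer of the one-dimensional weighted Dirichlet energy with endpoint values $0$ (on $\partial\Omega$) and $1$ (at $\rho_1+h^\alpha$ or $\rho_2-h^\alpha$).

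On the left boundary layer the Euler--Lagrange equation gives $\chi'(r)\propto e^{2\phi(r)/h}r^{-(2m+1)}$, and the minimal energy equals $\bigl[\int_{\rho_1}^{\rho_1+h^\alpha}e^{2\phi/h}r^{-(2m+1)}\,dr\bigr]^{-1}$. Since $\phi(\rho_1)=0$ and $\partial_r\phi(\rho_1)=-\partial_n\phi(\rho_1)<0$ by Hopf's lemma, a Laplace-type expansion at $\rho_1$ will give
\[
\int_{\rho_1}^{\rho_1+h^\alpha}e^{2\phi(r)/h}\,r^{-(2m+1)}\,dr = \frac{\rho_1^{-(2m+1)}\,h}{2\partial_n\phi(\rho_1)}\,(1+o(1)),
\]
so the $\rho_1$-layer will contribute $2h\,\partial_n\phi(\rho_1)\,\rho_1^{2m+1}(1+o(1))$ to the numerator. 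Replacing $\rho_1^{2m+1}$ by the upper bound $(\rho_1^*)^{2m+1}$ with $\rho_1^*=\rho_1+h^\alpha\mathds{1}_{m\geq 0}$ (trivial when $m<0$; by monotonicity of $r\mapsto r^{2m+1}$ when $m\geq 0$) produces the first term of $C_m(h)$, and a symmetric analysis at $\rho_2$ with $\rho_2^*=\rho_2-h^\alpha\mathds{1}_{m<0}$ yields the second.

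For the denominator I would lower bound by restricting the integral to the window $[r_{\min}-h^\beta,r_{\min}+h^\beta]$ (on which $\chi\equiv 1$) and factoring out the minimum of $r\mapsto r^{2m+1}$ on that window, which equals $(r_{\min}^*)^{2m+1}$ with $r_{\min}^*=r_{\min}-\mathrm{sgn}(2m+1)\,h^\beta$. The remaining Gaussian integral is handled by the Taylor expansion $\phi(r)=\phi_{\min}+\tfrac12\phi''_{\min}(r-r_{\min})^2+O(|r-r_{\min}|^3)$; here the assumption $\beta\in(1/3,1/2)$ is precisely what is needed, $\beta<1/2$ ensuring that the window is much wider than the natural Gaussian width $\sqrt{h/\phi''_{\min}}$ (so the tails are negligible) and $\beta>1/3$ killing the cubic remainder $O(h^{3\beta-1})$. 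Standard Laplace asymptotics then yield
\[
\int_{r_{\min}-h^\beta}^{r_{\min}+h^\beta} e^{-2\phi(r)/h}\,dr = \sqrt{\frac{\pi h}{\phi''_{\min}}}\,e^{-2\phi_{\min}/h}\,(1+o(1)).
\]

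Dividing the numerator upper bound by the denominator lower bound combines the factor $h$ from the boundary-layer constants with the $\sqrt{h}$ coming from the Gaussian integral to produce $2\sqrt{h\phi''_{\min}/\pi}\,e^{2\phi_{\min}/h}$, and distributing $(r_{\min}^*)^{2m+1}$ over each term as a ratio gives exactly $\lambda_{1,m}(h)\leq C_m(h)\sqrt{h}\,e^{2\phi_{\min}/h}(1+o(1))$. The main technical obstacle will be making the Laplace asymptotics uniform in $m\in\RR$: this is manageable because $m$ enters only through the polynomial factor $r^{2m+1}$ while the leading exponential structure is $m$-independent, but some care is required to track the $m$-dependence of the remainder and to check that the piecewise variational definition of $\chi$ produces an admissible $H_0^1$ function without spurious interface contributions at the leading order.
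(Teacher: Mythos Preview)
Your approach is essentially the paper's: same test function $\chi(r)r^{m+1/2}$, same boundary-layer minimization for the numerator, same Laplace/Gaussian estimate on the window $[r_{\min}-h^\beta,r_{\min}+h^\beta]$ for the denominator. The one ordering difference worth noting is in the numerator: the paper first bounds $r^{2m+1}\le (\rho_1+h^\alpha\mathds 1_{m\ge0})^{2m+1}$ on the layer and only \emph{then} applies the one-dimensional variational lemma (\cite[Lemma~A.1]{barbaroux:hal-01889492}) to the remaining $m$-free integral, so the $o(1)$ is automatically independent of $m$. You instead keep $r^{-(2m+1)}$ inside the Laplace integral and only afterwards ``replace $\rho_1^{2m+1}$ by $(\rho_1^*)^{2m+1}$''; that post-hoc replacement does not by itself cure the possible $m$-dependence of the Laplace remainder you correctly flag. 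Simply pulling the $r^{2m+1}$ factor out \emph{before} the asymptotics (as the paper does) resolves your stated technical obstacle with no extra work.
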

To do this, we look for a suitable test function to reinject into the Rayleigh quotient given in Proposition \ref{corollaire.minmax.mom_reel}. Heuristically, a minimizer of \eqref{eq.minmax.moment.reels} wants to be in the kernel of $\autopar{ \partial_{r} - (m + 1/2)/r}$ in the interior of $\Omega$. \\ 
Therefore, we choose to take test functions in $\left\{ a\, r^{m+1/2}, \; \text{ with } a \in \CC  \right\}$ up to the multiplication by a sufficiently smooth cutoff function for Dirichlet boundary condition.

\medskip

Consider the following cutoff function
\begin{definition} \label{definition:fonction_test}
Let $\varepsilon>0 $, define for all $r\in [\rho_1 , \rho_2]$ 
$$
\chi (r) =
      \left\{
      \begin{array}{ll}
        P \autopar{r} & \text{if} \; \max\, (r-\rho_1, \rho_2-r) \leq \varepsilon,\\
        1  & \text{otherwise.}
     \end{array}
    \right.
$$
with $P$ a sufficiently regular function such that $r \mapsto \chi(r) \, r^{m+1/2}\in H_{0}^1 $.
\end{definition}
Let us start by estimate the energy by putting in a test function like $r \mapsto \chi(r) \, r^{m+1/2}$.
\begin{lemma} \label{lem:majoration_numerateur}
Let $\alpha \in ]1/2, 1[$, there exists $h_0 > 0$ such that for all $h \in ]0, h_0]$ and for all $m \in \RR$, we have
\begin{multline*}
h^2 \int_{\rho_{1}}^{\rho_{2}}  e^{-2\phi / h} \valabs{\autopar{ \partial_{r} - \frac{m+1/2}{r}} \left( \chi(r) \, r^{m+1/2} \right)}^2  \diff r \\
\leq 2 h \autopar{\partial_n\phi \autopar{\rho_1}  \autopar{\rho_1 + h^\alpha \mathds{1}_{m \geq 0} }^{2m+1}  + \partial_n\phi\autopar{\rho_2} \autopar{\rho_2 - h^\alpha \mathds{1}_{m < 0}  }^{2m+1} } \autopar{1+ o(1) }
\end{multline*}
with $\chi$ defined in Definition \ref{definition:fonction_test}.
\end{lemma}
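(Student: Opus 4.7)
The key algebraic identity is that $r \mapsto r^{m+1/2}$ lies in the kernel of $\partial_r - (m+1/2)/r$, so by the Leibniz rule
\[
\left(\partial_r - \tfrac{m+1/2}{r}\right)\bigl(\chi(r)\, r^{m+1/2}\bigr) \;=\; \chi'(r)\, r^{m+1/2}.
\]
The integral to be controlled therefore reduces to $h^2\int_{\rho_1}^{\rho_2} (\chi'(r))^2\, r^{2m+1}\, e^{-2\phi(r)/h}\, dr$. Since $\chi \equiv 1$ on the interior region $[\rho_1 + \varepsilon, \rho_2 - \varepsilon]$ with $\varepsilon := h^\alpha$, the support of $\chi'$ is contained in the two boundary layers $[\rho_1, \rho_1 + \varepsilon]$ and $[\rho_2 - \varepsilon, \rho_2]$, and the integral splits into contributions $E_-$ and $E_+$ handled symmetrically.

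For each layer I propose to choose the cutoff explicitly as the Sturm--Liouville minimizer of the unweighted energy $\int (Q')^2 e^{-2\phi/h}\, dr$ subject to the boundary values $0$ on $\partial \Omega$ and $1$ at the inner end of the layer. Concretely, near $\rho_1$ I would set
\[
P_-(r) = \frac{1}{J_-} \int_{\rho_1}^r e^{2\phi(s)/h}\, ds, \qquad J_- := \int_{\rho_1}^{\rho_1+\varepsilon} e^{2\phi(s)/h}\, ds,
\]
and symmetrically near $\rho_2$; this is admissible ($H^1$, vanishing at the boundary, matching value $1$ at the inner end). A direct computation gives $\int (P'_-)^2 e^{-2\phi/h}\, dr = 1/J_-$, so that bounding $r^{2m+1} \leq M_1 := \sup_{[\rho_1, \rho_1+\varepsilon]} r^{2m+1}$ yields $E_- \leq h^2 M_1 / J_-$, and similarly $E_+ \leq h^2 M_2 / J_+$.

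The final step is the asymptotics of $J_-$. Using $\phi(\rho_1)=0$ and $\partial_r\phi(\rho_1)=-\partial_n\phi(\rho_1)$, Taylor expansion gives $\phi(r) = -\partial_n\phi(\rho_1)(r-\rho_1) + O((r-\rho_1)^2)$; on the layer the quadratic contribution to the exponent is $O(h^{2\alpha-1}) = o(1)$ since $\alpha > 1/2$, and because $\alpha < 1$ the upper limit $h^\alpha$ already captures essentially all of the exponential mass, so that
\[
J_- = (1+o(1))\int_0^{h^\alpha} e^{-2\partial_n\phi(\rho_1)\, t/h}\, dt = (1+o(1))\, \frac{h}{2\partial_n\phi(\rho_1)}.
\]
This gives $E_- \leq 2h\,\partial_n\phi(\rho_1)\, M_1\,(1+o(1))$. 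Finally, $M_1$ equals $(\rho_1 + h^\alpha)^{2m+1}$ when $m \geq -1/2$ and $\rho_1^{2m+1}$ when $m < -1/2$; in every regime $M_1 = (\rho_1 + h^\alpha \mathds{1}_{m\geq 0})^{2m+1}(1+o(1))$, because the only discrepancy (the band $-1/2 \leq m < 0$) produces a ratio $(1+h^\alpha/\rho_1)^{2m+1} \to 1$. Summing with the symmetric estimate near $\rho_2$ produces the announced bound. The delicate point is the joint constraint $1/2 < \alpha < 1$: the lower threshold is needed to linearize $\phi$ in the exponent, while the upper threshold is needed to replace $\int_0^{h^\alpha}$ by $\int_0^\infty$ in the evaluation of $J_-$.
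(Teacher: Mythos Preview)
Your proof is correct and follows the same strategy as the paper: reduce to the two boundary layers via the identity $(\partial_r - (m+1/2)/r)(\chi\, r^{m+1/2}) = \chi' r^{m+1/2}$, Taylor-expand $\phi$ near each endpoint, bound $r^{2m+1}$ by its layer supremum, and choose the cutoff so as to minimize the weighted Dirichlet energy. The only cosmetic difference is that the paper invokes \cite[Lemma~A.1]{barbaroux:hal-01889492} for the optimal cutoff, whereas you write the minimizer $P_-(r)=J_-^{-1}\int_{\rho_1}^r e^{2\phi/h}\,ds$ explicitly and compute $\int (P_-')^2 e^{-2\phi/h}=1/J_-$ directly; your treatment of the band $-1/2\le m<0$ when identifying $M_1$ is in fact slightly more careful than the paper's own bound.
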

\begin{proof}Let $m \in \RR$ and $h, \varepsilon > 0$, we have
\begin{multline*}
h^2 \int_{\rho_{1}}^{\rho_{2}}  e^{-2\phi / h} \valabs{\autopar{ \partial_{r} - \frac{m+1/2}{r}} \left( \chi(r) \, r^{m+1/2} \right)}^2  \diff r \\
= h^2 \lc \underset{(a)}{\int_{\rho_{1}}^{\rho_{1} + \varepsilon}} + \underset{(b)}{\int^{\rho_{2}}_{\rho_{2} - \varepsilon}}  \rc e^{-2\phi / h}  r^{2m+1}   \valabs{ \partial_r P(r)}^2 \mathrm{d}r.
\end{multline*}
Let's start by estimate the integral $(a)$; using the Taylor expansion of $\phi$ in a neighborhood of the interior boundary,
$$
\phi(r) = - \autopar{r- \rho_1 } \partial_n \phi \autopar{\rho_1 } + \cO \autopar{\varepsilon^2} \; \; \text{with } \rho_1 \leq r \leq \rho_1 + \varepsilon,
$$
thus
\begin{align*}
\int_{\rho_{1}}^{\rho_{1} + \varepsilon} e^{-2\phi / h}  r^{2m+1}  & \valabs{ \partial_r P(r)}^2 \mathrm{d}r =  \int_{\rho_{1}}^{\rho_{1} + \varepsilon} e^{\frac{2}{h} (r-\rho) \partial_n \phi\autopar{\rho_{1}}}  r^{2m+1} \valabs{\partial_r P(r)}^2  \mathrm{d}r \; \autopar{1+\cO \autopar{\frac{\varepsilon^2}{h}}} \\
& \leq   \autopar{\rho_1 + h^\alpha \mathds{1}_{m \geq 0} }^{2m+1}  \int_{0}^{\varepsilon} e^{\frac{2}{h} \tau \partial_n \phi\autopar{\rho_{1}}}  \valabs{\partial_{\tau} \widetilde{P}(\tau)}^2  \mathrm{d}\tau \autopar{1 +\cO \autopar{ \frac{\varepsilon^2}{h}}}.
\end{align*}
We can now choose the optimal $P$ using \cite[Lemma A.1.]{barbaroux:hal-01889492} and obtain
\begin{align*}
    \int_{0}^{\varepsilon} e^{\frac{2}{h} \tau \partial_n \phi\autopar{\rho_{1}}}  \valabs{\partial_{\tau} \widetilde{P}(\tau)}^2  \mathrm{d}\tau & \leq \frac{2 \partial_n \phi \autopar{\rho_{1}} /h }{1- \mathrm{exp}\autopar{-\frac{2}{h} \varepsilon \partial_n \phi \autopar{\rho_{1}} }} \\
    & = \frac{2 \partial_n \phi \autopar{\rho_1}}{h} \autopar{1 + \cO \autopar{e^{-\frac{2 \varepsilon}{h} \partial_n \phi \autopar{\rho_{1}}}}}.
\end{align*}

For the integral $(b)$, the estimate follows the same pattern. In a neighborhood of the exterior boundary, the Taylor expansion of $\phi$ write
$$
\phi(\tau) = - \autopar{\rho_{2} - \tau} \partial_n \phi \autopar{\rho_{2}} + \cO \autopar{\varepsilon^2}\; \; \text{with } \rho_2 - \varepsilon \leq \tau \leq \rho_2,
$$
thus
\begin{align*}
\int^{\rho_{2}}_{\rho_{2} - \varepsilon} e^{-2\phi / h}  r^{2m+1} &  \valabs{ \partial_r P(r)}^2 \mathrm{d}r  =  \int_{\rho_{2}-\varepsilon}^{\rho_{2}} e^{\frac{2}{h}\autopar{\rho_{2} - \tau} \partial_{n} \phi\autopar{\rho_{2}}} \tau^{2m+1} \valabs{\partial_{\tau} P(\tau)}^2 \diff \tau \autopar{1+ \cO \autopar{\frac{\varepsilon^2}{h}}} \\
& \leq \autopar{\rho_2 - h^\alpha \mathds{1}_{m < 0}  }^{2m+1} \int^{\varepsilon}_0 e^{\frac{2}{h}\tau \partial_{n} \phi\autopar{\rho_{2}}}  \valabs{\partial_{\tau} \widetilde{P}(\tau)}^2 \diff \tau \autopar{1+ \cO \autopar{\frac{\varepsilon^2}{h}}}.
\end{align*}
In the same way as for the integral (a), we choose the optimal $P$, hence
\begin{align*}
    \int_{0}^{\varepsilon} e^{\frac{2}{h} \tau \partial_n \phi\autopar{\rho_{2}}}  \valabs{\partial_{\tau} \widetilde{P}(\tau)}^2  \mathrm{d}\tau \leq \frac{2 \partial_n \phi \autopar{\rho_2}}{h} \autopar{1 + \cO \autopar{e^{-\frac{2 \varepsilon}{h} \partial_n \phi \autopar{\rho_{2}}}}}.
\end{align*}
To control the remainders, let us take $\varepsilon = h^\alpha$ with $\alpha \in ]1/2, 1[$. Combining the upper bounds of $(a)$ and $(b)$ we get the result.
\end{proof}
Now we give a lower bound for the $L^2$-norm using the same test function.
\begin{lemma} \label{lem:minoration_denominateur}
Let $\beta \in ]1/3, 1/2[$, there exists $h_0 >0$ such that for all $ h\in ]0,h_0[$ and for all $ m \in \RR$ we have
$$
\left\|e^{-\phi/h} \chi \, r^{m+1/2}\right\|^2 \geq  \sqrt{\frac{\pi}{\phi_{\min}''}} \; \autopar{r_{\min} - \mathrm{sgn}\autopar{2m+1} h^\beta }^{2m+1} \; \sqrt{h} \;  e^{-2\phi_{\min} / h}(1+o(1)) \; ,
$$
with $\phi_{\min}''$ defined in Remark \ref{remark.phimin}.
\end{lemma}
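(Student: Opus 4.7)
The plan is to apply a version of Laplace's method at the minimum $r_{\min}$ of $\phi$, taking care to produce a bound that is uniform with respect to the real parameter $m$. The main point is that the factor $r^{2m+1}$ is handled by a simple monotonicity argument, while the Gaussian concentration of $e^{-2\phi/h}$ supplies the $\sqrt{h}\,e^{-2\phi_{\min}/h}$ prefactor.

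First, I restrict the integration to the small window $J_h = [r_{\min}-h^\beta,\, r_{\min}+h^\beta]$. Since $r_{\min}\in(\rho_1,\rho_2)$ is fixed and both $h^\alpha$ and $h^\beta$ tend to zero, for $h$ small enough $J_h$ is contained in the interior of $\cI_\delta$ and, in particular, in the region where $\chi\equiv 1$ (using Definition \ref{definition:fonction_test} with $\varepsilon=h^\alpha$). Hence
\[
\left\|e^{-\phi/h}\chi\,r^{m+1/2}\right\|^2
\;\geq\;
\int_{J_h} e^{-2\phi(r)/h}\,r^{2m+1}\,\diff r.
\]

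Next, on $J_h$ I bound $r^{2m+1}$ from below by its minimum. Since $r\mapsto r^{2m+1}$ is monotone on $J_h$ (increasing when $2m+1>0$, decreasing when $2m+1<0$), its minimum on $J_h$ is exactly $(r_{\min}-\mathrm{sgn}(2m+1)\,h^\beta)^{2m+1}$. Crucially this is the only place where $m$ enters, and the estimate that follows is $m$-independent.

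For the remaining integral I use the Taylor expansion of $\phi$ at $r_{\min}$. Since $\phi'(r_{\min})=0$ and $\phi''(r_{\min})=\phi''_{\min}$ (see Remark \ref{remark.phimin}), there exists $C>0$ such that
\[
\phi(r)\;\leq\;\phi_{\min}+\tfrac{\phi''_{\min}}{2}(r-r_{\min})^2+C|r-r_{\min}|^3,\qquad r\in J_h.
\]
On $J_h$ the cubic remainder in $e^{-2\phi/h}$ contributes a factor $\exp(-2C h^{3\beta}/h)=1+O(h^{3\beta-1})=1+o(1)$, because $\beta>1/3$. Therefore
\[
\int_{J_h} e^{-2\phi(r)/h}\,\diff r
\;\geq\;
e^{-2\phi_{\min}/h}(1+o(1))\int_{-h^\beta}^{h^\beta}e^{-\phi''_{\min}t^2/h}\,\diff t.
\]
The substitution $u=t\sqrt{\phi''_{\min}/h}$ transforms the Gaussian integral into $\sqrt{h/\phi''_{\min}}\int_{-a_h}^{a_h}e^{-u^2}\,\diff u$ with $a_h=h^{\beta-1/2}\sqrt{\phi''_{\min}}$; since $\beta<1/2$, $a_h\to+\infty$ and the integral tends to $\sqrt{\pi\,h/\phi''_{\min}}(1+o(1))$.

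Combining the three pieces gives exactly the announced bound, with an $o(1)$ that depends only on $h$ and not on $m$. The only real obstacle is the uniformity in $m$; it is resolved by isolating the $m$-dependence into the single monotone factor $r^{2m+1}$, whose minimum on $J_h$ produces the precise prefactor $(r_{\min}-\mathrm{sgn}(2m+1)h^\beta)^{2m+1}$ appearing in the statement.
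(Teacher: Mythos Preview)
Your proof is correct and follows essentially the same approach as the paper: restrict to the window $[r_{\min}-h^\beta,r_{\min}+h^\beta]$ where $\chi\equiv 1$, pull out the $m$-dependence via the monotone lower bound $(r_{\min}-\mathrm{sgn}(2m+1)h^\beta)^{2m+1}$, Taylor-expand $\phi$ at $r_{\min}$ and reduce to a Gaussian integral whose remainder is controlled because $\beta\in\,]1/3,1/2[$. The only cosmetic slip is the mention of $\cI_\delta$: what matters is that $J_h\subset[\rho_1+\varepsilon,\rho_2-\varepsilon]$ with $\varepsilon=h^\alpha$, which holds for small $h$ since $r_{\min}$ is fixed in the interior.
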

\begin{proof}
Let $m \in \RR$ and $h >0$. We choose the same test function in the previous proof. \\
There exists $\eta_0 >0$ such that for all $\eta \in ]0,\eta_0]$, the annulus of center (0,0) and of radius $r_{\min} - \eta $ and $r_{\min} + \eta$ is included in $\Omega$ and $\phi$ admits the Taylor expansion,
$$
\phi ( r) = \phi_{\min} + \frac{\autopar{r-r_{\min}}^2}{2} \phi''_{\min} + \cO \autopar{\eta^3} \; \; \text{with } \; r_{\min} - \eta \leq r \leq r_{\min} + \eta.
$$
Therefore, we have 
\begin{align*}
    \left\|e^{-\phi/h} \chi \, r^{m+1/2}\right\|^2   \geq \int_{r_{\min}-\eta}^{r_{\min}+\eta} e^{-2\phi/h} r^{2m+1} \diff r 
\end{align*}
and 
\begin{align*}
    & \int_{r_{\min}-\eta}^{r_{\min}+\eta}  e^{-2\phi/h} r^{2m+1} \diff r \geq\\
    &   \autopar{r_{\min} - \mathrm{sgn}\autopar{2m+1} \eta }^{2m+1} e^{-2\phi_{\min} / h} \int_{r_{\min}-\eta}^{r_{\min}+\eta} e^{ - \autopar{r-r_{\min}}^2 \phi''_{\min}/h } \diff r \autopar{1 + \cO \autopar{\frac{\eta^3}{h}}} \; .
\end{align*}
It is now enough to consider the change of variable $\tau =\autopar{r-r_{\min}}\sqrt{\phi''_{\min}/h}$ to get
\begin{align*}
    \int_{r_{\min}-\eta}^{r_{\min}+\eta} \mathrm{exp}\autopar{ - \frac{\autopar{r-r_{\min}}^2}{h} \phi''_{\min}} \diff r &= \sqrt{h} \; \int_{-\frac{\eta}{\sqrt{h}}}^{\frac{\eta}{\sqrt{h}}} \mathrm{exp} \autopar{- \phi''_{\min} \tau^2 } \diff \tau \\
    & = \sqrt{h} \; \int_\RR \mathrm{exp} \autopar{- \phi''_{\min} \tau^2}  \diff \tau \autopar{1 + o_{ \frac{\eta}{\sqrt{h}} \rightarrow +\infty} (1)}\\
    & = \sqrt{\frac{\pi h}{\phi''_{\min}}} \autopar{1 + o_{\frac{\eta}{\sqrt{h}} \rightarrow +\infty} (1)} \; .
\end{align*}

Taking, $\eta = h^\beta$ with $\beta \in ]1/3, 1/2[$, we have
$$
  \left\|e^{-\phi/h} \chi \, r^{m+1/2}\right\|^2  \geq  \sqrt{\frac{\pi}{\phi_{\min}''}} \; \autopar{r_{\min} - \mathrm{sgn}\autopar{2m+1} h^\beta }^{2m+1} \; \sqrt{h} \;  e^{-2\phi_{\min} / h}(1+o(1)) \; .
$$
\end{proof}
By combining the two lemmas we prove Proposition \ref{propo.borne_sup_lambda1m}.
\subsection{Approximation results} \label{subsection_consequences_borne_sup}
We focus in this section on the implications of the upper bound of Proposition \ref{propo.borne_sup_lambda1m} and the elliptic estimates of Section \ref{section.estim.elliptiques}. From these results we deduce the localization in the neighborhood of the minimum of $\phi$ and the monomial approximation of the eigenfunctions associated to $ \lambda_{1,m} (h)$.
\subsubsection{Localization of the eigenfunctions} \label{subsubsection:localisation}
The upper bound implies that the eigenfunctions are localized in the neighborhood of the minimum of $\phi$, i.e. in the neighborhood of $r_{\min}$. Let us start with a technical lemma which will be useful in the proof of Proposition \ref{propo:localisation_fct_propre}.
\begin{lemma} \label{lemme.localisation.fonction_propre}
For all $ h >0$ and $ m \in \RR$, and for all $v\in H_0^1$, we have
\begin{equation*}
h^2 \int_{\rho_{1}}^{\rho_{2}}  e^{-2\phi / h} \valabs{\autopar{ \partial_{r} - \frac{m+1/2}{r}} v(r)}^2  \geq h^2 a_0 \int_{\rho_{1}}^{\rho_2} |v|^2 \, ,
\end{equation*}
where $a_0 >0$ is the smallest eigenvalue of the Dirichlet Laplacian on $\Omega$.
\end{lemma}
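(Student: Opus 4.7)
My plan is to observe that the weight $e^{-2\phi/h}$ can only make the left-hand side larger, so the statement reduces to an unweighted, $m$-uniform Hardy--Poincar\'e type inequality, which will in turn be identified with a quadratic form of the radial part of the Dirichlet Laplacian on $\Omega$.

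First I would invoke the maximum principle recalled in Section~\ref{subsubsection.phi} to get $\phi\leq 0$ on $\overline{\Omega}$, so that $e^{-2\phi/h}\geq 1$ pointwise. It is then enough to prove the unweighted inequality
\begin{equation*}
\int_{\rho_1}^{\rho_2}\valabs{\autopar{\partial_r-\frac{m+1/2}{r}}v}^2\diff r \;\geq\; a_0 \int_{\rho_1}^{\rho_2}|v|^2\diff r,
\end{equation*}
valid for all $v\in H_0^1([\rho_1,\rho_2],\CC)$ and all $m\in\RR$.

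Next I would expand the square and integrate by parts the cross term, the boundary contributions vanishing because $v\in H_0^1$. A short calculation yields
\begin{equation*}
\int_{\rho_1}^{\rho_2}\valabs{\autopar{\partial_r-\frac{m+1/2}{r}}v}^2\diff r = \int_{\rho_1}^{\rho_2}|\partial_r v|^2\diff r + \autopar{m^2-\tfrac14}\int_{\rho_1}^{\rho_2}\frac{|v|^2}{r^2}\diff r,
\end{equation*}
so that, using $m^2-1/4\geq -1/4$ for every real $m$, the right-hand side is bounded from below by the quadratic form of the operator $T:=-\partial_r^2-1/(4r^2)$ on $L^2([\rho_1,\rho_2],\diff r)$ with Dirichlet conditions at $\rho_1$ and $\rho_2$. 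Notice that this bound is now completely independent of $m$.

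Finally, I would identify $T$ with the radial part of $-\Delta$ on $\Omega$. Via the unitary isomorphism $\cU\colon g\mapsto (2\pi r)^{-1/2}g$ between $L^2([\rho_1,\rho_2],\diff r)$ and the radial sector of $L^2(\Omega,\diff x)$, a direct Jacobian computation gives $\cU^{-1}(-\Delta)\cU = T$ with the boundary conditions in correspondence. Since the Dirichlet Laplacian on the annulus commutes with rotations and has a unique, strictly positive ground state, this ground state is radial and its eigenvalue is precisely $a_0$; the min-max principle then yields the unweighted inequality, which combined with $e^{-2\phi/h}\geq 1$ and multiplication by $h^2$ gives the lemma. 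The only step requiring care is the regime $|m|<1/2$, where the coefficient $m^2-1/4$ is negative and the intermediate bound is genuinely of Hardy type; the decisive observation is that this Hardy singularity is exactly the $-1/(4r^2)$ term inherited from the $\sqrt{r}$-Jacobian of the radial Laplacian, so no constant is lost and the estimate holds uniformly in $m\in\RR$.
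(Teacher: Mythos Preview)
Your proof is correct and follows essentially the same route as the paper: drop the weight via $\phi\leq 0$, expand the square and integrate by parts, then identify the resulting $m$-independent quadratic form with that of the radial Dirichlet Laplacian on $\Omega$. The paper writes the expansion as $\int |(\partial_r-\tfrac{1}{2r})v|^2 + \int |\tfrac{m}{r}v|^2$ and drops the second (nonnegative) term, whereas you expand fully and minimise over $m$; these are algebraically the same manoeuvre, and you have filled in more carefully the unitary equivalence $r^{1/2}(-\Delta_{\mathrm{rad}})r^{-1/2}=-\partial_r^2-\tfrac{1}{4r^2}$ and the reason the ground state of $-\Delta$ on the annulus is radial.
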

\begin{proof}
Let $v \in H_0^1 $. Using the fact that $\phi \leq 0$, it follows,
\begin{equation*}
h^2 \int_{\rho_{1}}^{\rho_{2}}  e^{-2\phi / h} \valabs{\autopar{ \partial_{r} - \frac{m+1/2}{r}} v_{h,m}(r)}^2  \diff r \geq h^2 \int_{\rho_{1}}^{\rho_{2}}  \valabs{\autopar{ \partial_{r} - \frac{m+1/2}{r}} v_{h,m}(r)}^2  \diff r .
\end{equation*}
By integration by parts,
\begin{equation*}
\int_{\rho_{1}}^{\rho_{2}} \valabs{\autopar{ \partial_{r} - \frac{m+1/2}{r}} v}^2   =  \int_{\rho_{1}}^{\rho_{2}} \left| \left( \partial_r - \frac{1}{2} \right) v \right|^2 + \left|  \frac{m}{r}v \right|^2  .
\end{equation*}
Finally we have the lower bound
\begin{equation*}
\int_{\rho_{1}}^{\rho_{2}} \left| \left( \partial_r - \frac{1}{2} \right) v \right|^2 \geq  a_0 \int_{\rho_{1}}^{\rho_2} |v|^2 \, ,
\end{equation*}
$a_0 >0$ being the smallest eigenvalue of the Dirichlet Laplacian on $\Omega$.
\end{proof}
We have the following localization property.
\begin{proposition} \label{propo:localisation_fct_propre}
Let $\alpha \in ]0,1/2[ $, $ h \in ]0, h_0]$, $ m \in \RR$ and $v_{h,m}$ an eigenfunction associated to $\lambda_{1,m} (h)$. We have
\begin{equation*}
\frac{\int_{r_{\min} - h^{\alpha}}^{r_{\min} + h^{\alpha}} e^{-2 \phi/h}  \vert v_{h,m} \vert^2   }{\int_{\rho_{1}}^{\rho_{2}} e^{-2 \phi/h}  \vert v_{h,m}  \vert^2} = 1 + f_{1,m} (h) \cO(h^{\infty}) \, ,
\end{equation*}
and for $\delta >0$ sufficiently small
\begin{equation*}
\frac{\int_{\rho_{1} + \delta}^{\rho_{2} - \delta} e^{-2 \phi/h}  \vert v_{h,m} \vert^2   }{\int_{\rho_{1}}^{\rho_{2}} e^{-2 \phi/h}  \vert v_{h,m}  \vert^2} = 1 + f_{1,m} (h) \cO(h^{\infty}) \, ,
\end{equation*}
with $h_0$ and $C_m (h)$ defined in Proposition \ref{propo.borne_sup_lambda1m} and $\cO \autopar{h^\infty}$ independent of $m$.
\end{proposition}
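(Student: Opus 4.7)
The plan is to reduce the proposition to a Poincaré-type control of the unweighted $L^2$-norm of $v_{h,m}$, which is then combined with the Gaussian smallness of the weight $e^{-2\phi/h}$ away from the circle of minimum of $\phi$. The key ingredients will be the Rayleigh quotient formulation of Proposition \ref{corollaire.minmax.mom_reel}, the unweighted Poincaré-type inequality of Lemma \ref{lemme.localisation.fonction_propre}, the upper bound of Proposition \ref{propo.borne_sup_lambda1m}, and the Taylor expansion of $\phi$ at $r_{\min}$ from Remark \ref{remark.phimin}.

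First, since $\phi \leq 0$ on $\overline{\Omega}$ by the maximum principle, one has $e^{-2\phi/h} \geq 1$, so Lemma \ref{lemme.localisation.fonction_propre} applied to $v_{h,m}$ gives
\[
h^2 a_0 \int_{\rho_1}^{\rho_2} |v_{h,m}|^2 \diff r
\leq h^2 \int_{\rho_1}^{\rho_2} e^{-2\phi/h}\left|\left(\partial_r - \tfrac{m+1/2}{r}\right) v_{h,m}\right|^2 \diff r
= \lambda_{1,m}(h)\, \|e^{-\phi/h} v_{h,m}\|^2.
\]
Using $\lambda_{1,m}(h) = f_{1,h}(m)\sqrt{h}\,e^{2\phi_{\min}/h}$, this yields
\[
\int_{\rho_1}^{\rho_2} |v_{h,m}|^2 \diff r
\leq \frac{f_{1,h}(m)}{a_0\,h^{3/2}}\, e^{2\phi_{\min}/h}\, \|e^{-\phi/h} v_{h,m}\|^2.
\]

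Next, on $J_h := [\rho_1,\rho_2] \setminus [r_{\min} - h^\alpha, r_{\min} + h^\alpha]$, the Taylor expansion $\phi(r) - \phi_{\min} \geq \tfrac{\phi''_{\min}}{4}(r-r_{\min})^2$ near $r_{\min}$ combined with the uniform lower bound $\phi - \phi_{\min} \geq c_1 > 0$ outside a fixed neighborhood of $r_{\min}$ yields $\phi(r) - \phi_{\min} \geq c\, h^{2\alpha}$ for some constant $c>0$ independent of $m$ (and $h$ small). Since $\alpha < 1/2$, this gives $e^{-2(\phi(r) - \phi_{\min})/h} \leq e^{-2c h^{2\alpha-1}} = \cO(h^\infty)$ uniformly on $J_h$. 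Combining with the previous estimate,
\[
\int_{J_h} e^{-2\phi/h} |v_{h,m}|^2 \diff r
\leq e^{-2\phi_{\min}/h}\, \cO(h^\infty)\int_{\rho_1}^{\rho_2} |v_{h,m}|^2 \diff r
= f_{1,h}(m)\, \cO(h^\infty)\, \|e^{-\phi/h} v_{h,m}\|^2,
\]
where the polynomial factor $h^{-3/2}$ is absorbed into $\cO(h^\infty)$. Dividing by $\|e^{-\phi/h} v_{h,m}\|^2$ gives the first identity. The second identity follows by the same argument applied to $[\rho_1,\rho_1+\delta] \cup [\rho_2-\delta,\rho_2]$: for $\delta < \delta_0$, continuity of $\phi$ combined with $\phi(\rho_1)=\phi(\rho_2)=0 > \phi_{\min}$ gives $\phi - \phi_{\min} \geq c_\delta > 0$ uniformly on this set, hence an even stronger exponential smallness of the weight.

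No step presents a real obstacle: the only point to verify carefully is that the constant $c$ in $\phi(r) - \phi_{\min} \geq c\, h^{2\alpha}$ on $J_h$ can be taken independent of $m$, which is immediate since $\phi$ does not depend on $m$, and that the factor $f_{1,h}(m)$ on the right-hand side is precisely the one coming from the $L^2$-bound for $v_{h,m}$, as announced in the statement.
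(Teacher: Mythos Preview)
Your proof is correct and follows essentially the same route as the paper: bound the unweighted $L^2$-norm of $v_{h,m}$ via Lemma \ref{lemme.localisation.fonction_propre} and the eigenvalue equation, then use the quadratic lower bound on $\phi-\phi_{\min}$ away from $r_{\min}$ to show the weight is $\cO(h^\infty)$ on the complement. The only cosmetic difference is that the paper obtains $\phi-\phi_{\min}\geq c\,h^{2\alpha}$ on $J_h$ by invoking the monotonicity of $\phi$ on each side of $r_{\min}$ (so the infimum on $J_h$ is attained at $r_{\min}\pm h^\alpha$) rather than your Taylor-plus-compactness patching; both arguments are equally valid. Note also that you list Proposition \ref{propo.borne_sup_lambda1m} among the key ingredients but never actually invoke it, and indeed it is not needed here since the statement retains the factor $f_{1,h}(m)$ in the error term.
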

\begin{proof}
Firstly, let us note that
$$   
\frac{\int_{r_{\min} - h^{\alpha}}^{r_{\min} + h^{\alpha}} e^{-2 \phi/h}  \vert v_{h,m} \vert^2   }{\int_{\rho_{1}}^{\rho_{2}} e^{-2 \phi/h}  \vert v_{h,m}  \vert^2} = 1- \frac{\autopar{\int_{\rho_{1}}^{r_{\min} - h^{\alpha}} + \int_{r_{\min} + h^{\alpha}}^{\rho_{2}} } e^{-2 \phi/h}  \vert v_{h,m}  \vert^2 }{\int_{\rho_{1}}^{\rho_{2}} e^{-2 \phi/h}  \vert v_{h,m}  \vert^2} .
$$   
Furthermore, since $\phi$ is a solution of the Poisson's equation \eqref{eq.phi} and $B>0$, the maximum principle ensures that 
\begin{align*}
    \underset{r\in [\rho_1 , \rho_2] \backslash [r_{\min}-h^{\alpha},r_{\min}+h^{\alpha}]}{inf} \phi(r) & = \min \{ \phi(r_{\min} - h^{\alpha}), \phi (r_{\min}+h^{\alpha}) \} \\
    & \geq \phi_{\min} + \frac{\phi_{\min}^{''}}{2} h^{2\alpha} \autopar{1+ \cO \autopar{h^{\alpha}}},
\end{align*}

then
\begin{equation*}
    \autopar{\int_{\rho_{1}}^{r_{\min} - h^{\alpha}} + \int_{r_{\min} + h^{\alpha}}^{\rho_{2}} } e^{-2 \phi/h}  \vert v_{h,m}  \vert^2 \leq e^{-2\phi_{\min}/h - 2\phi''_{\min} h^{2\alpha -1} \autopar{1+ \cO \autopar{h^{\alpha}}}} \int_{\rho_{1}}^{\rho_{2}}  \vert v_{h,m}  \vert^2  .
\end{equation*}
Moreover, according to Proposition \ref{propo.borne_sup_lambda1m}, 
\begin{equation*}
\int_{\rho_{1}}^{\rho_2} |v_{h,m}|^2 \leq a_0^{-1}  \int_{\rho_{1}}^{\rho_{2}}  e^{-2\phi / h} \valabs{\autopar{ \partial_{r} - \frac{m+1/2}{r}} v(r)}^2   \, ,
\end{equation*}
and
\begin{equation*}
h^2 \int_{\rho_{1}}^{\rho_{2}}  e^{-2\phi / h} \valabs{\autopar{ \partial_{r} - \frac{m+1/2}{r}} v(r)}^2  = f_{1,h} (m) \; h^{1/2} \; e^{\frac{2 \phi_{\min}}{h}} \; \int_{\rho_{1}}^{\rho_2} e^{-2\phi/h}|v_{h,m}|^2  .
\end{equation*}
Thus
\begin{multline*}
    \autopar{\int_{\rho_{1}}^{r_{\min} - h^{\alpha}} + \int_{r_{\min} + h^{\alpha}}^{\rho_{2}} } e^{-2 \phi/h}  \vert v_{h,m}  \vert^2 \leq \\
    f_{1,h} (m) \;  a_0^{-1} h^{-3/2} e^{-2\phi_{\min}''/h} \int_{\rho_{1}}^{\rho_{2}} e^{-2 \phi/h}  \vert v_{h,m}  \vert^2   (1+o_{h \rightarrow 0}(1)),
\end{multline*}
and the conclusion follows.
\end{proof}
\subsubsection{Monomial approximation} \label{subsubsection_approx_holo}
Following the results of Proposition \ref{propo.Dirac}, we define the orthogonal projection on $\mathrm{ker} \autopar{d_{h,m}^\times}$.\\ Remember that
$$
\mathrm{ker} \autopar{d_{h,m}^\times} = \mathrm{Vect} \autopar{r \mapsto e^{- \phi /h} r^{m +1/2}}\, .
$$
\begin{definition} \label{def_projection_holo}
Let $\delta, h >0$ and $m \in \RR$. We define $\Pi_{h,\delta}$ the orthogonal projection on the kernel of $d_{h,m}^\times$ defined on the domain $H_0^1 \left( \cI_{\delta} , \CC \right)$ with $\cI_{\delta} := ]\rho_1 + \delta, \rho_2 - \delta[$. \\
Moreover, if $u= e^{-\phi/h}v$, we write $\Pi_{h,\delta} u = e^{-\phi/h} \widetilde{\Pi}_{h,\delta} v. $
\end{definition}
\begin{notation} \label{norme_l2_poids} Recall that $\partial_n \phi (\rho_1) , \partial_n \phi (\rho_2) >0$, see Subsection \ref{subsubsection.phi}. The following norm will be used hereafter, for all $X = (x_1, x_2) \in \CC^2$ 
$$
\cN_{\partial_{n} \phi} (X) = \sqrt{ \partial_n \phi (\rho_1) \valabs{x_1 }^2 + \partial_n \phi (\rho_2) \valabs{x_2 }^2   } \, .
$$
\end{notation}
From Proposition \ref{propo:localisation_fct_propre} we deduce the following monomial approximation result.
\begin{proposition}
\label{propo:approx_holo}
There exist $C, h_0, \delta_0 >0$ such that for all $\delta \in ]0,\delta_0]$ and $h \in ]0 , h_0[$, we have for all eigenfunction $v_{h,m}$ associated to $\lambda_{1,m} (h)$, 
\begin{equation*}
\norm{e^{- \phi/h} \left( \Id - \widetilde{\Pi}_{h,\delta}  \right)v_{h,m}  }_{L^{2} \autopar{\cI_{\delta}}} \leq C h^{-1/2} \lambda_{1,m} (h) \autopar{ 1+ f_{1,h} (m)} \left\|e^{-\phi/h} v_{h,m}\right\|_{L^{2} \autopar{\cI_{\delta}}} \; ,
\end{equation*}
and
\begin{equation*}
\cN_{\partial_{n} \phi} \left( U_{h,m} \right)  \leq C h^{-3/4} \sqrt{\lambda_{1,m} (h)} \autopar{ 1+ f_{1,h} (m) } \left\|e^{-\phi/h} v_{h,m}\right\|_{L^{2} \autopar{\cI_{\delta}}} \; ,
\end{equation*}    
with $\cN_{\partial_{n} \phi} (\cdot)$ defined in Notation \ref{norme_l2_poids} and 
\begin{equation*}
U_{h,m} = \left( e^{-2\phi(\rho_{1}+ \delta)/h}\left( \Id - \widetilde{\Pi}_{h,m}  \right)v_{h,m} \autopar{\rho_1 + \delta}, e^{-2\phi(\rho_{2}- \delta)/h}\left( \Id - \widetilde{\Pi}_{h,m}  \right)v_{h,m} \autopar{\rho_2 - \delta} \right) .
\end{equation*}
\end{proposition}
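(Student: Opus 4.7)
The approach is to apply the elliptic estimates of Proposition \ref{proposition_inegalite_ellipticite} to the $L^2(\cI_\delta)$-orthogonal part $w$ of the unweighted eigenfunction $u_{h,m} := e^{-\phi/h}v_{h,m}$, and to control the resulting right-hand side via the eigenvalue equation, the norm identity of Lemma \ref{lemme.calcul_norme2_d_h,m}, and the localization result of Proposition \ref{propo:localisation_fct_propre}.

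I would first set $\widetilde m := m - \gamma_{h,p}$ and $u_{h,m} := e^{-\phi/h}v_{h,m} \in H^1_0 \cap H^2([\rho_1,\rho_2])$, so that $d_{h,\widetilde m}d_{h,\widetilde m}^\times u_{h,m} = \lambda_{1,m}(h)\, u_{h,m}$. Definition \ref{def_projection_holo} yields $w := (\Id - \Pi_{h,\delta})u_{h,m} = e^{-\phi/h}(\Id - \widetilde\Pi_{h,\delta})v_{h,m}$, so the first inequality amounts to a bound on $\|w\|_{L^2(\cI_\delta)}$. Since $\Pi_{h,\delta}u_{h,m}$ lies in $\ker(d_{h,\widetilde m}^\times)$, one has $d_{h,\widetilde m}^\times w = d_{h,\widetilde m}^\times u_{h,m}$ and $w$ belongs to $\mathrm{Dom}(d_{h,\widetilde m}^\times)\cap \ker(d_{h,\widetilde m}^\times)^\perp$ in $L^2(\cI_\delta)$. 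Proposition \ref{proposition_inegalite_ellipticite} then gives
\[
\|w\|_{L^2(\cI_\delta)}^2 \leq (2hB_0)^{-1}\|d_{h,\widetilde m}^\times u_{h,m}\|_{L^2(\cI_\delta)}^2,
\]
together with the companion boundary estimate. The quantitative core is then to bound $\|d_{h,\widetilde m}^\times u_{h,m}\|_{L^2([\rho_1,\rho_2])}$: setting $y := d_{h,\widetilde m}^\times u_{h,m}$, the eigenvalue relation reads $d_{h,\widetilde m} y = \lambda_{1,m}(h)\, u_{h,m}$, and dropping the non-negative first two terms in the identity of Lemma \ref{lemme.calcul_norme2_d_h,m} leaves $\|d_{h,\widetilde m} y\|^2 \geq h B_0 \|y\|^2$, whence
\[
\|d_{h,\widetilde m}^\times u_{h,m}\|_{L^2([\rho_1,\rho_2])} \leq (hB_0)^{-1/2}\,\lambda_{1,m}(h)\, \|u_{h,m}\|_{L^2([\rho_1,\rho_2])}.
\]
Proposition \ref{propo:localisation_fct_propre} (the $\delta$-version) then lets me replace $\|u_{h,m}\|_{L^2([\rho_1,\rho_2])}$ by $\|u_{h,m}\|_{L^2(\cI_\delta)}$ up to the factor $(1+f_{1,h}(m))$ appearing in the statement, once one unpacks the $f_{1,m}(h)\cO(h^\infty)$ remainder.

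For the boundary inequality, I would combine the second bound of Proposition \ref{proposition_inegalite_ellipticite} with the identity $|U_{h,m,j}|^2 = e^{-2\phi(\rho_j+\epsilon_j\delta)/h}\,|w(\rho_j+\epsilon_j\delta)|^2$ (with $\epsilon_1=+1$, $\epsilon_2=-1$), which follows from the definition of $U_{h,m}$ and from the relation $w = e^{-\phi/h}(\Id-\widetilde\Pi_{h,\delta})v_{h,m}$. The main obstacle is the careful bookkeeping of the exponential weights $e^{-\phi/h}$ and $e^{-2\phi/h}$ across the three pictures (unweighted eigenfunction $u_{h,m}$, weighted eigenfunction $v_{h,m}$, and boundary vector $U_{h,m}$), and tracking how the $h^{-3/2}$ from the boundary ellipticity, the $h^{-1/2}$ from Lemma \ref{lemme.calcul_norme2_d_h,m}, and the localization factor combine to produce the stated powers of $h$ and the single $(1+f_{1,h}(m))$ factor rather than its square.
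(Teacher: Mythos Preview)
Your overall strategy is correct and matches the paper's: reduce to bounding $\|d_{h,\widetilde m}^\times u_{h,m}\|$ on $\cI_\delta$, identify this with $\|d_{h,\widetilde m}^\times w\|$, and then invoke Proposition~\ref{proposition_inegalite_ellipticite} together with Proposition~\ref{propo:localisation_fct_propre}.

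There is, however, a genuine gap in the step where you bound $\|d_{h,\widetilde m}^\times u_{h,m}\|_{L^2([\rho_1,\rho_2])}$. You apply Lemma~\ref{lemme.calcul_norme2_d_h,m} to $y:=d_{h,\widetilde m}^\times u_{h,m}$ in order to deduce $\|d_{h,\widetilde m}y\|^2\ge hB_0\|y\|^2$. But that lemma is stated for $u\in H^1_0([\rho_1,\rho_2])$, and $y$ is \emph{not} in $H^1_0$: since $u_{h,m}\in H^1_0\cap H^2$ is a non-trivial Dirichlet eigenfunction of a second-order ODE, one has $\partial_r u_{h,m}(\rho_j)\neq 0$ (otherwise unique continuation forces $u_{h,m}\equiv 0$), and hence $y(\rho_j)=-ih\,\partial_r u_{h,m}(\rho_j)\neq 0$. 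Without the Dirichlet condition the integration by parts underlying Lemma~\ref{lemme.calcul_norme2_d_h,m} produces boundary terms, and the claimed inequality need not hold.

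The fix is both simpler and sharper, and it is what the paper actually does: use the eigenvalue equation directly as a quadratic-form identity. Since $u_{h,m}\in H^1_0$ and $d_{h,\widetilde m}d_{h,\widetilde m}^\times u_{h,m}=\lambda_{1,m}(h)u_{h,m}$, one integration by parts (legitimate because $u_{h,m}\in H^1_0$) gives
\[
\|d_{h,\widetilde m}^\times u_{h,m}\|_{L^2([\rho_1,\rho_2])}^2
=\langle u_{h,m},\,d_{h,\widetilde m}d_{h,\widetilde m}^\times u_{h,m}\rangle
=\lambda_{1,m}(h)\,\|u_{h,m}\|_{L^2([\rho_1,\rho_2])}^2,
\]
which is precisely the Rayleigh-quotient identity of Proposition~\ref{corollaire.minmax.mom_reel}. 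This replaces your detour through Lemma~\ref{lemme.calcul_norme2_d_h,m}, removes the $H^1_0$ obstruction, and also eliminates the extra $h^{-1/2}$ loss your route would incur (your chain yields $\|w\|\le Ch^{-1}\lambda_{1,m}(h)\|u_{h,m}\|$ rather than $Ch^{-1/2}$). The rest of your outline (localization via Proposition~\ref{propo:localisation_fct_propre}, and the boundary estimate via the second inequality of Proposition~\ref{proposition_inegalite_ellipticite}) then goes through exactly as you describe.
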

\begin{proof}
Let $h>0$, $0 < \delta < (\rho_2 - \rho_1) /2$ and $m \in \RR$. Let $v_{h,m}$ an eigenfunction associated to $\lambda_{1,m} (h)$. \\
Using Proposition \ref{propo:localisation_fct_propre}, there exist $h_0 , \delta_0 >0$ such that for all $h \in ]0 , h_0]$ and $\delta \in ]0, \delta_0 ]$, we have
\begin{align*}
\left\|e^{-\phi / h} h \autopar{\partial_r + \frac{m + 1/2}{r} } v_{h,m} \right\|_{L^{2} (\cI_{\delta})}^2 & \leq \left\|e^{-\phi / h} h \autopar{\partial_r + \frac{m + 1/2}{r} } v_{h,m} \right\|_{L^{2} (\rho_{1} , \rho_{2})}^2  \\
& = \lambda_{1,m} (h) \left\|e^{-\phi/h} v_{h,m}\right\|_{L^{2} (\rho_{1} , \rho_{2})}^2 \\
&\leq \lambda_{1,m} (h) \autopar{ 1+ f_{1,h} (m) \cO \autopar{ h^\infty}} \left\|e^{-\phi/h} v_{h,m}\right\|_{L^{2} \autopar{\cI_{\delta}}}^2 \\
&\leq \lambda_{1,m} (h) \autopar{ 1+ f_{1,h} (m) } \left\|e^{-\phi/h} v_{h,m}\right\|_{L^{2} \autopar{\cI_{\delta}}}^2 ,
\end{align*}
with $\cI_\delta = [\rho_1 + \delta , \rho_2 - \delta ]$. \\
Then, if $u_{h,m} = e^{-\phi/h} v_{h,m}$, we have
\begin{align*}
\left\|e^{-\phi / h} h \autopar{\partial_r + \frac{m + 1/2}{r} } v_{h,m} \right\|_{L^{2} (\cI_{\delta})}^2 & = \left\|e^{-\phi / h} h \autopar{\partial_r + \frac{m + 1/2}{r} } \autopar{\Id - \widetilde{\Pi}_{h,m}} v_{h,m} \right\|_{L^{2} (\cI_{\delta})}^2 \\
&= \left\|d_{h,m}^\times \autopar{\Id - \Pi_{h,m}} u_{h,m} \right\|_{L^{2} (\cI_{\delta})}^2 \, .
\end{align*}
We use Proposition \ref{proposition_inegalite_ellipticite} to complete the proof.
\end{proof}
\section{Proof of Proposition \ref{propo.CVU.coercivite}} \label{section.preuve.propo}
In this last section we prove the first two points of Proposition \ref{propo.CVU.coercivite}. We begin with simplify the Rayleigh quotient given in Proposition \ref{corollaire.minmax.mom_reel}. Then, we prove in the next two subsections coercivity and uniform convergence results for $f_{1,h} (m)$.
\subsection{Simplified Rayleigh quotient} \label{sous_section:reduction_rayleigh}
Proposition \ref{propo:approx_holo} states that it is sufficient to know the behavior of the energy near the boundary of $[\rho_1 , \rho_2]$. This leads us to minimize the numerator of Rayleigh quotient as follows.
\begin{lemma} \label{lemme_reduction_num}
There exist $h_0, \delta_0 >0$ such that for all $\delta \in ]0,\delta_0]$ and $h \in ]0 , h_0]$, we have for all $v \in \mathrm{H}_{0}^1 $, 
\begin{multline*}
\left\| e^{-\phi / h} h \autopar{\partial_r + \frac{m + 1/2}{r} } v \right\|^2  \geq h \left( E_{m}^{int} (h,\delta) \, \valabs{v (\rho_1 + \delta )}^2 \right. \\
\left. + E_{m}^{ext} \autopar{h,\delta} \, \valabs{v (\rho_2 - \delta )}^2 \right) \autopar{1+ o(1)} \; ,
\end{multline*}
when $\delta / h \rightarrow + \infty$, $\delta^2 / h \rightarrow 0$ with
\begin{gather*} 
E^{int}_m (h,\delta) = e^{- \delta_{1}} \frac{2 \partial_n \phi \autopar{\rho_1} + (2mh/ \rho_1) \mathds{1}_{]0, +\infty[} (m) }{e^{2m\delta_{1} \mathds{1}_{]0, +\infty[} (m)}- e^{-2\delta_{1} \rho_{1} \partial_{n} \phi \autopar{\rho_{1}} /h }}  \, , \\
E^{ext}_m (h,\delta) = e^{ \delta_{2}} \frac{2 \partial_n \phi \autopar{\rho_2} - (2mh/ \rho_2) \mathds{1}_{]-\infty,0[} (m) }{e^{-2m\delta_{2} \mathds{1}_{]-\infty,0[} (m)}- e^{-2\delta_{2} \rho_{2} \partial_{n} \phi \autopar{\rho_{2}} /h }} \, ,
\end{gather*} 
where $\delta_1 = \mathrm{ln} \autopar{1+\frac{\delta}{\rho_1}}$ and $\delta_2 = - \mathrm{ln} \autopar{1 - \frac{\delta}{\rho_2}}$. Moreover, $o(1)$ is independent of $m$.
\end{lemma}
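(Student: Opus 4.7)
My plan is to reduce the estimate to two independent boundary-layer computations, one at each edge of the annulus. I first set $u(r) = r^{m+1/2} v(r)$, which converts the differential operator via $\autopar{\partial_r + (m+1/2)/r} v = r^{-(m+1/2)} \partial_r u$ and recasts the left-hand side as
\[
\left\| e^{-\phi/h} h \autopar{\partial_r + \frac{m+1/2}{r}} v \right\|^2 = h^2 \int_{\rho_1}^{\rho_2} r^{-(2m+1)} e^{-2\phi/h} |\partial_r u|^2 \, dr.
\]
Since the integrand is nonnegative, I drop the middle piece $[\rho_1+\delta, \rho_2-\delta]$ and lower-bound by the sum of the two collar integrals on $[\rho_1, \rho_1+\delta]$ and $[\rho_2-\delta, \rho_2]$. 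The first collar will produce $E^{int}_m$ and the second $E^{ext}_m$; by symmetry of the construction it suffices to describe the interior collar in detail.

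\textbf{Interior collar.} On $[\rho_1, \rho_1+\delta]$ I perform the logarithmic change of variable $s = \ln(r/\rho_1)$, which maps the collar to $[0, \delta_1]$ with $dr = r\, ds$. Setting $\tilde u(s) = u(\rho_1 e^s)$, the interior contribution becomes
\[
h^2 \rho_1^{-(2m+2)} \int_0^{\delta_1} e^{-(2m+2)s} \, e^{-2\phi(\rho_1 e^s)/h} \, |\partial_s \tilde u|^2 \, ds.
\]
Using $\phi(\rho_1) = 0$ and a second-order Taylor expansion of $\phi$ at $\rho_1$, I obtain $\phi(\rho_1 e^s) = -\rho_1 s \, \partial_n \phi(\rho_1) + O(\delta^2)$ uniformly on $[0,\delta_1]$; the hypothesis $\delta^2/h \to 0$ then converts the remainder $e^{O(\delta^2/h)}$ into a multiplicative $(1+o(1))$ factor independent of $m$. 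At this stage the integral is controlled from below by a weighted one-dimensional Dirichlet problem of the form $\int_0^{\delta_1} e^{\mu s} |\partial_s \tilde u|^2 \, ds$ with $\mu = 2 \rho_1 \partial_n \phi(\rho_1)/h - (2m+2)$, subject to $\tilde u(0) = 0$ (from $v\in H^1_0$) and $\tilde u(\delta_1) = (\rho_1+\delta)^{m+1/2} v(\rho_1+\delta)$.

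\textbf{Explicit minimum and assembly.} The Euler--Lagrange equation $(e^{\mu s} \tilde u')' = 0$ yields the explicit minimum $|\tilde u(\delta_1)|^2 \mu / (1 - e^{-\mu \delta_1})$, valid for either sign of $\mu$. Multiplying by the prefactor $h^2 \rho_1^{-(2m+2)} (\rho_1+\delta)^{2m+1}$ and expanding using $e^{\delta_1} = 1 + \delta/\rho_1$ gives the claimed lower bound of the shape $h E^{int}_m(h,\delta) |v(\rho_1+\delta)|^2 (1+o(1))$. The sign-dependent form of $E^{int}_m$, with its indicator $\mathds{1}_{]0,+\infty[}(m)$, is then obtained by splitting according to the sign of $m$ and applying a simpler bound to the polynomial factor $e^{-(2m+2)s}$ in each case so as to land exactly on the numerator and denominator displayed in the lemma. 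The exterior collar is treated by the symmetric substitution $s = -\ln(r/\rho_2)$ and yields $E^{ext}_m$ by an identical argument.

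\textbf{Main obstacle.} The technical core of the proof is the uniformity of the $(1+o(1))$ factor in $m$ as $h \to 0$ under $\delta/h \to +\infty$ and $\delta^2/h \to 0$. The Taylor remainder for $\phi$ is independent of $m$, so its exponential contribution is controlled uniformly. The subtlety lies in the parameter $\mu$ and the polynomial weight $r^{-(2m+1)}$, both of which depend on $m$: the crude bounds used to simplify them must not introduce $m$-dependent errors. The presence of the indicators $\mathds{1}_{]0,+\infty[}(m)$ at the interior and $\mathds{1}_{]-\infty,0[}(m)$ at the exterior reflects whether the angular momentum term $-(2m+2)$ reinforces or opposes the magnetic term $2\rho_1 \partial_n \phi(\rho_1)/h$ at the corresponding collar; the bookkeeping required to sort these regimes and land on the precise normalization in the displayed formulas is the main technical task.
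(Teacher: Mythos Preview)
Your proposal is correct and follows essentially the same route as the paper: restrict to the two boundary collars, apply the logarithmic change of variable $s=\ln(r/\rho_1)$ (resp.\ $s=-\ln(r/\rho_2)$), replace $\phi$ by its first-order Taylor expansion at the boundary with the $\delta^2/h\to 0$ hypothesis absorbing the remainder uniformly in $m$, and then minimize the resulting weighted Dirichlet integral $\int_0^{\delta_j} e^{\mu s}|\partial_s w|^2\,ds$ explicitly (the paper cites \cite[Lemma~A.1]{barbaroux:hal-01889492} for this, which is your Euler--Lagrange computation). The only place where the paper is more concrete than your sketch is the sign-splitting: rather than ``applying a simpler bound to the polynomial factor'', the paper observes that $m\mapsto\Lambda_m^{\rm int}(h,\delta,w)$ is decreasing and $m\mapsto\Lambda_m^{\rm ext}(h,\delta,w)$ is increasing, so for the unfavorable sign of $m$ one simply replaces $m$ by $0$ before minimizing---this is exactly what produces the indicators in $E_m^{\rm int}$ and $E_m^{\rm ext}$.
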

\begin{proof}
Let $m\in \RR$ and $0<\delta <( \rho_2 - \rho_1 ) /2$.  Let's start by minimizing the energy by its contribution near the boundary
\begin{multline*}
\left\| e^{-\phi / h} h \autopar{\partial_r + \frac{m + 1/2}{r} } v \right\|^2  \geq \\
h^2 \autopar{\int_{\rho_{1}}^{\rho_{1} + \delta} + \int_{\rho_{2} - \delta}^{\rho_{2}}  } \; e^{-2\phi / h} \valabs{ \autopar{ \partial_r \; - \; \frac{m + 1/2}{r} } v  }^2 : = \cQ^{int}_{m,h,\delta} \autopar{v} + \cQ^{ext}_{m,h,\delta} \autopar{v} \;.
\end{multline*}
We use the Taylor expansion of $\phi$ near the boundary
\begin{equation*}
      \left\{
      \begin{aligned}
        &\phi (r ) = - \autopar{r-\rho_1} \partial_n \phi \autopar{\rho_1} + \cO \autopar{\delta^2}\; \; \text{with } \rho_1 \leq r \leq \rho_1 + \delta,\\
        &\phi (r ) = - \autopar{\rho_2 - r } \partial_n \phi \autopar{\rho_2} + \cO \autopar{\delta^2}\; \; \text{with } \rho_2 - \delta \leq r \leq \rho_2 .
     \end{aligned}
    \right.
\end{equation*}
The integral near the interior boundary becomes
\begin{equation*}
    \cQ^{int}_{m,h,\delta} \autopar{v} = h^2 \int_{\rho_{1}}^{\rho_{1} + \delta} e^{2\autopar{r - \rho_{1}} \partial_{n} \phi \autopar{\rho_{1}}/ h} \valabs{ \autopar{ \partial_r \; - \; \frac{m + 1/2}{r} } v(r)  }^2  \diff r \autopar{1+ \cO \autopar{\frac{\delta^2}{h}}} \; .
\end{equation*}    
By writing, $\tau = \mathrm{ln} \autopar{\frac{r}{\rho_1}}$, $\delta_1 = \mathrm{ln} \autopar{1+\frac{\delta}{\rho_1}}$ and $u (\tau)= v\left( \rho_1 e^\tau \right)$, we get 
\begin{align*}
\cQ^{int}_{m,h,\delta} \autopar{v} &= h^2 \int_{0}^{\delta_{1}} e^{2\autopar{e^{\tau} -1} \rho_{1} \partial_{n} \phi \autopar{\rho_{1}} /h} \valabs{\autopar{\partial_\tau - (m+1/2)} u (\tau)}^2 \frac{e^{-\tau}\diff \tau}{\rho_1} \autopar{1+ \cO \autopar{\frac{\delta^2}{h}}} \\
&= h^2 \int_{0}^{\delta_{1}} e^{\frac{2\tau}{h} \rho_{1} \partial_{n} \phi \autopar{\rho_{1}} } \valabs{\autopar{\partial_\tau - (m+ 1/2 )} u (\tau)}^2 \frac{e^{-\tau} \diff \tau}{\rho_1} \autopar{1+ \cO \autopar{\frac{\delta^2}{h}}} \\
&= h^2 \int_{0}^{\delta_{1}} e^{\frac{2\tau}{h} \rho_{1} \partial_{n} \phi \autopar{\rho_{1}} - 2m \autopar{\delta_{1}-\tau} - \delta_1} \valabs{\partial_\tau w_1 \autopar{\tau}}^2 \frac{\diff \tau}{\rho_1 } \autopar{1+ \cO \autopar{\frac{\delta^2}{h}}} \\
&=: h \, \Lambda_{m}^{int} (h,\delta, w_1) \autopar{1+ \cO \autopar{\frac{\delta^2}{h}}} \; ,
\end{align*}
with $w_1 \autopar{\tau} = e^{(m+1/2) \autopar{\delta_{1} - \tau}} u(\tau)$.\\
In the same way, we obtain for the exterior boundary,
\begin{align*}
    \cQ^{ext}_{m,h,\delta} \autopar{v} &= h^2 \int_{0}^{\delta_{2}} e^{\frac{2\tau}{h} \rho_{2} \partial_{n} \phi \autopar{\rho_{1}} + 2m \autopar{\delta_{2}-\tau} + \delta_{2}} \valabs{\partial_\tau w_2 \autopar{\tau}}^2 \frac{\diff \tau}{\rho_2}\autopar{1+ \cO \autopar{\frac{\delta^2}{h}}} \\
    &=: h \, \Lambda_{m}^{ext} \autopar{h,\delta, w_2}\autopar{1+ \cO \autopar{\frac{\delta^2}{h}}} \; ,
\end{align*}
with $\delta_2 = - \mathrm{ln} \autopar{1 - \frac{\delta}{\rho_2}}$ and $w_2 \autopar{\tau} = e^{(2m+1) \autopar{\delta_{2} - \tau}} v \autopar{\rho_2 e^{-\tau}}$.\\
Finally, we have the lower bound
\begin{equation*}
\left\| e^{-\phi / h} h \autopar{\partial_r + \frac{m + 1/2}{r} } v \right\|^2  \geq h \autopar{ \Lambda_{m}^{int} (h,\delta, w_1) + \Lambda_{m}^{ext} \autopar{h,\delta, w_2}} \autopar{1+ \cO \autopar{\frac{\delta^2}{h}}} \; .
\end{equation*}
Moreover, for any sufficiently regular function $w$ the maps 
\begin{equation*}
    \begin{array}{ccccc}
    \RR & \to & \RR^{*}_+ \\
    m & \mapsto & \Lambda_{m}^{ext} (h,\delta, w) 
    \end{array} \; \; \; \text{and} \; \; \; \begin{array}{ccccc}
    \RR & \to & \RR^{*}_+ \\
    m & \mapsto & \Lambda_{m}^{int} (h,\delta, w) 
    \end{array}
\end{equation*}
are increasing and decreasing respectively. Thus for all $m \in \RR_+$
\begin{equation*}
    \Lambda_{m}^{ext} (h,\delta,w) \geq \Lambda_{0}^{ext} (h,\delta,w) \; \; \; \text{and} \; \; \; \Lambda_{-m}^{int} (h,\delta, w) \geq \Lambda_{0}^{int} (h,\delta,w) \; .
\end{equation*}
By using \cite[Lemma A.1.]{barbaroux:hal-01889492} to the integrals $\Lambda_{m}^{int} (h,\delta,w_1)$, $\Lambda_{0}^{int} (h,\delta,w_1)$, $\Lambda_{m}^{ext} \autopar{h,\delta, w_2}$, $\Lambda_{0}^{ext} \autopar{h,\delta, w_2}$ we get for $m \leq 0$
\begin{align*}
    \Lambda_{0}^{int} \autopar{h,\delta, w_1 } &= \frac{h e^{-\delta_{1}}}{\rho_1} \int_{0}^{\delta_{1}} e^{\frac{2\tau}{h} \rho_{1} \partial_{n} \phi \autopar{\rho_{1}} } \valabs{\partial_\tau w_1 \autopar{\tau}}^2 \diff \tau\\
    &\geq e^{-\delta_{1}}\frac{2 \partial_n \phi \autopar{\rho_1} }{1 - e^{-2\delta_{1} \rho_{1} \partial_{n} \phi \autopar{\rho_{1}} /h }} \valabs{v\autopar{\rho_{1} + \delta }}^2 \; ,
\end{align*}
\begin{align*}
    \Lambda_{m}^{ext} (h,\delta, w_2) &= \frac{h e^{(2m+1)\delta_{2}}}{\rho_2} \int_{0}^{\delta_{2}} e^{\autopar{\frac{2\rho_{2} }{h}\partial_{n} \phi \autopar{\rho_{2}}  -2m } \tau } \valabs{\partial_\tau w_2 \autopar{\tau}}^2 \diff \tau \\
    &\geq e^{ \delta_{2}} \frac{2 \partial_n \phi \autopar{\rho_2} - 2mh/ \rho_2 }{e^{-2m\delta_{2}}- e^{-2\delta_{2} \rho_{2} \partial_{n} \phi \autopar{\rho_{2}} /h }} \valabs{v \autopar{\rho_{2} - \delta }}^2 \; ,
\end{align*}

and for $m \geq 0$

\begin{align*}
    \Lambda_{m}^{int} (h,\delta, w_1) &= \frac{h e^{-(2m+1)\delta_{1}}}{\rho_1} \int_{0}^{\delta_{1}} e^{\autopar{\frac{2\rho_{1} }{h}\partial_{n} \phi \autopar{\rho_{1}}  +2m } \tau } \valabs{\partial_\tau w_1 \autopar{\tau}}^2 \diff \tau \\
    &\geq e^{- \delta_{1}} \frac{2 \partial_n \phi \autopar{\rho_1} + 2mh/ \rho_1 }{e^{2m\delta_{1}}- e^{-2\delta_{1} \rho_{1} \partial_{n} \phi \autopar{\rho_{1}} /h }} \valabs{v \autopar{\rho_{1} + \delta }}^2 \; ,
\end{align*}
\begin{align*}
    \Lambda_{0}^{ext} \autopar{h,\delta, w_2 } &= \frac{h e^{\delta_{2}}}{\rho_2} \int_{0}^{\delta_{2}} e^{\frac{2\tau}{h} \rho_{2} \partial_{n} \phi \autopar{\rho_{2}} } \valabs{\partial_\tau w_2 \autopar{\tau}}^2 \diff \tau\\
    &\geq e^{\delta_{2}}\frac{2 \partial_n \phi \autopar{\rho_2} }{1 - e^{-2\delta_{2} \rho_{2} \partial_{n} \phi \autopar{\rho_{2}} /h }} \valabs{v\autopar{\rho_{2} - \delta }}^2 \; .
\end{align*}
This leads us to the result.
\end{proof}
Using Propositions \ref{propo:localisation_fct_propre} and \ref{propo:approx_holo}, we can replace $v_{h,m}$ by $\widetilde{\Pi}_{h,m}  v_{h,m}$ in the denominator of the Rayleigh quotient \eqref{eq.minmax.moment.reels}.
\begin{lemma} \label{lemme.premiere_maj_norme}
Let $\beta \in ]0,1/2[ $, $h>0$, $m\in\mathbb{R}$ and $v_{h,m}$ an eigenfunction associated to $\lambda_{1,m} (h)$.
\begin{equation*}
\left\| e^{-\phi/h} v_{h,m} \right\|_{L^{2} (\rho_{1}, \rho_{2})} \leq R_m (h) \, \left\| e^{-\phi/h} \widetilde{\Pi}_{h,m}  v_{h,m} \right\|_{L^{2} \autopar{\cI_{\min,h}}} \, ,
\end{equation*}
with $\cI_{\min, h} = \left[ r_{\min} - h^\beta , r_{\min} + h^\beta \right]$,
$$ 
R_m (h) = \autopar{\autopar{1 + \autopar{ f_{1,h} (m) + \autopar{f_{1,h} (m)}^2 }\cO \autopar{ h^\infty}} \; \sqrt{1 + f_{1,m} (h) \cO\autopar{h^\infty}}}^{-1} ,
$$
and $\widetilde{\Pi}_{h,m}$ defined in Definition \ref{def_projection_holo}. Moreover, $\cO\autopar{h^\infty}$ is independent of $m$.
\end{lemma}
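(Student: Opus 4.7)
The statement is a ``reverse'' comparison: the full weighted $L^2$-norm of an eigenfunction should, up to an error factor very close to $1$, be controlled by the norm of its holomorphic projection on the small interval $\cI_{\min,h}$ around $r_{\min}$. My plan is to combine the exponential localization (Proposition~\ref{propo:localisation_fct_propre}) with the monomial approximation (Proposition~\ref{propo:approx_holo}), noting first that for $h$ small enough $\cI_{\min,h}\subset \cI_\delta$ for any fixed $\delta\in(0,\delta_0)$, so $\widetilde{\Pi}_{h,m}$ makes sense on $\cI_{\min,h}$.

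\textbf{Step 1: localize onto $\cI_{\min,h}$.} Applying Proposition~\ref{propo:localisation_fct_propre} with $\alpha=\beta\in(0,1/2)$ gives
\[
\left\|e^{-\phi/h}v_{h,m}\right\|^{2}_{L^{2}(\rho_1,\rho_2)}\;\leq\;\frac{1}{1+f_{1,h}(m)\cO(h^{\infty})}\left\|e^{-\phi/h}v_{h,m}\right\|^{2}_{L^{2}(\cI_{\min,h})},
\]
which, after taking square roots, already produces the factor $\sqrt{1+f_{1,h}(m)\cO(h^{\infty})}$ appearing in $R_m(h)$. The $\cO(h^{\infty})$ here is uniform in $m$, as stated in Proposition~\ref{propo:localisation_fct_propre}.

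\textbf{Step 2: split off the holomorphic part and bound the remainder.} Write $v_{h,m}=\widetilde{\Pi}_{h,m}v_{h,m}+(\Id-\widetilde{\Pi}_{h,m})v_{h,m}$ on $\cI_\delta$; the triangle inequality on $L^{2}(\cI_{\min,h})\subset L^{2}(\cI_\delta)$ yields
\[
\left\|e^{-\phi/h}v_{h,m}\right\|_{L^{2}(\cI_{\min,h})}\leq \left\|e^{-\phi/h}\widetilde{\Pi}_{h,m}v_{h,m}\right\|_{L^{2}(\cI_{\min,h})}+\left\|e^{-\phi/h}(\Id-\widetilde{\Pi}_{h,m})v_{h,m}\right\|_{L^{2}(\cI_\delta)}.
\]
For the second term I invoke the first inequality of Proposition~\ref{propo:approx_holo}, then use $\lambda_{1,m}(h)=f_{1,h}(m)\sqrt{h}\,e^{2\phi_{\min}/h}$ to rewrite $h^{-1/2}\lambda_{1,m}(h)=f_{1,h}(m)e^{2\phi_{\min}/h}$. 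Since $\phi_{\min}<0$, the factor $e^{2\phi_{\min}/h}$ is $\cO(h^{\infty})$, so
\[
\left\|e^{-\phi/h}(\Id-\widetilde{\Pi}_{h,m})v_{h,m}\right\|_{L^{2}(\cI_\delta)}\leq \bigl(f_{1,h}(m)+f_{1,h}(m)^{2}\bigr)\cO(h^{\infty})\,\left\|e^{-\phi/h}v_{h,m}\right\|_{L^{2}(\rho_1,\rho_2)},
\]
where I also used $\|e^{-\phi/h}v_{h,m}\|_{L^{2}(\cI_\delta)}\leq \|e^{-\phi/h}v_{h,m}\|_{L^{2}(\rho_1,\rho_2)}$ and the product $f_{1,h}(m)\cdot(1+f_{1,h}(m))$ coming from the approximation bound.

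\textbf{Step 3: combine and solve.} Plugging the estimate of Step~2 into Step~1 gives
\[
\sqrt{1+f_{1,h}(m)\cO(h^{\infty})}\,\left\|e^{-\phi/h}v_{h,m}\right\|_{L^{2}(\rho_1,\rho_2)}\leq \left\|e^{-\phi/h}\widetilde{\Pi}_{h,m}v_{h,m}\right\|_{L^{2}(\cI_{\min,h})}+\bigl(f_{1,h}(m)+f_{1,h}(m)^{2}\bigr)\cO(h^{\infty})\left\|e^{-\phi/h}v_{h,m}\right\|_{L^{2}(\rho_1,\rho_2)}.
\]
Moving the last summand to the left and factoring produces exactly the coefficient $\bigl(1+(f_{1,h}(m)+f_{1,h}(m)^{2})\cO(h^{\infty})\bigr)\sqrt{1+f_{1,h}(m)\cO(h^{\infty})}$ on the left, which after inversion is $R_m(h)$.

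\textbf{Main obstacle.} No single step is computationally hard; the delicate point is \emph{bookkeeping} the powers of $f_{1,h}(m)$ in the error: one factor $f_{1,h}(m)$ comes from $h^{-1/2}\lambda_{1,m}(h)$, while the quadratic term $f_{1,h}(m)^{2}$ arises from the extra $(1+f_{1,h}(m))$ factor in Proposition~\ref{propo:approx_holo}. One must also verify that all $\cO(h^{\infty})$ symbols remain uniform in $m$, which is granted by the explicit uniformity statements of Propositions~\ref{propo:localisation_fct_propre} and \ref{propo:approx_holo}, and that the Step~3 rearrangement is legitimate, i.e.\ that the coefficient subtracted on the left stays positive for small~$h$, which is clear because $(f+f^{2})\cO(h^{\infty})$ is dominated by $\sqrt{1+f\cO(h^{\infty})}$ once $h$ is small.
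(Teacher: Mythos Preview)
Your proof is correct and follows essentially the same route as the paper: localize onto $\cI_{\min,h}$ via Proposition~\ref{propo:localisation_fct_propre}, split off $\widetilde{\Pi}_{h,m}v_{h,m}$ by the triangle inequality, control the remainder through Proposition~\ref{propo:approx_holo} and the identity $h^{-1/2}\lambda_{1,m}(h)=f_{1,h}(m)e^{2\phi_{\min}/h}$, then rearrange. The only cosmetic difference is that the paper bounds the remainder term by $\|e^{-\phi/h}v_{h,m}\|_{L^{2}(\cI_{\min,h})}$ (using localization once more on the $\cI_\delta$-norm) whereas you bound it by the full $L^{2}(\rho_1,\rho_2)$-norm directly; both choices lead to the same factor $R_m(h)$ after absorbing into the $\cO(h^\infty)$ symbols.
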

\begin{proof}
Let $h>0$ and $m\in \RR$. Let $v_{h,m}$ an eigenfunction associated to $\lambda_{1,m} (h)$. \\
Proposition \ref{propo:localisation_fct_propre} ensures that for $\beta \in ]0 , 1/2[$, we have the existence of $h_0 >0$ such that for all $h \in ]0, h_0]$
$$
\autopar{1 + f_{1,h} (m) \cO\autopar{h^\infty}} \int_{\rho_{1}}^{\rho_{2}} e^{-2 \phi/h}  \vert v_{h,m}  \vert^2 = \int_{\cI_{\min , h}} e^{-2 \phi/h}  \vert v_{h,m} \vert^2 ,
$$
with $\cI_{\min , h} = [r_{\min} - h^\beta , r_{\min} + h^\beta]$. \\
Using Proposition \ref{propo:localisation_fct_propre}, we have the following inequalities
\begin{align*}
\left\| e^{-\phi/h} v_{h,m} \right\|_{L^{2} \autopar{\cI_{\min,h}}}  &\leq \left\| e^{-\phi/h} \widetilde{\Pi}_{h,m}  v_{h,m} \right\|_{L^{2} \autopar{\cI_{\min,h}}}  +   \left\| e^{-\phi/h} \autopar{id - \widetilde{\Pi}_{h,m} }v_{h,m} \right\|_{L^{2} \autopar{\cI_{\min,h}}} \\
&\leq \left\| e^{-\phi/h} \widetilde{\Pi}_{h,m}  v_{h,m} \right\|_{L^{2} \autopar{\cI_{\min,h}}}+ \left\| e^{-\phi/h} \autopar{id - \widetilde{\Pi}_{h,m} }v_{h,m} \right\|_{L^{2} \autopar{\cI_{\delta}}} 
\end{align*}
with $\delta>0$ sufficiently small, $C>0$ and $\cI_\delta = [\rho_1 + \delta , \rho_2 - \delta]$.\\
However, according to Propostion \ref{propo:approx_holo}
\begin{equation*}
\left\| e^{-\phi/h} \autopar{id - \widetilde{\Pi}_{h,m} }v_{h,m} \right\|_{L^{2} \autopar{\cI_{\delta}}}  \leq \autopar{ f_{1,h} (m) + \autopar{f_{1,h} (m)}^2 }\cO \autopar{ h^\infty} \left\| e^{-\phi/h} v_{h,m} \right\|_{L^{2} \autopar{\cI_{\min,h}}} ,
\end{equation*}
with $\cO \autopar{h^\infty}$ independent of $m$.\\
Then
\begin{equation*}
\left\| e^{-\phi/h} v_{h,m} \right\|_{L^{2} (\rho_{1}, \rho_{2})} \leq R_m (h) \left\| e^{-\phi/h} \widetilde{\Pi}_{h,m}  v_{h,m} \right\|_{L^{2} \autopar{\cI_{\min,h}}} ,
\end{equation*}
with $R_m (h) = \autopar{\autopar{1 + \autopar{ f_{1,h} (m) + \autopar{f_{1,h} (m)}^2 }\cO \autopar{ h^\infty}} \; \sqrt{1 + f_{1,m} (h) \cO\autopar{h^\infty}}}^{-1} $.
\end{proof}
Proposition \ref{propo.Dirac} ensures that $ \widetilde{\Pi}_{h,m}  v_{h,m} = \alpha_m r^{m+1/2}$ with $\alpha_m \in \CC^*$. The denominator can be again simplified as follows
\begin{lemma} \label{lemme.seconde_maj_norme}
Let $\beta \in ]1/3, 1/2[$, $h>0$, $m\in\mathbb{R}$ and $v_{h,m}$ an eigenfunction associated to $\lambda_{1,m} (h)$.
\begin{multline*}
 \left\| e^{-\phi/h} \widetilde{\Pi}_{h,m}  v_{h,m} \right\|_{L^{2} \autopar{\cI_{\min, h}}}^2 \leq \\
\valabs{\alpha_m }^2 \sqrt{\frac{\pi}{\phi''_{\min}}} \autopar{r_{\min} + \mathrm{sgn}(2m+1) h^{\beta} }^{2m+1} \sqrt{h} \; e^{-2 \phi_{\min} / h} \autopar{1 + o(1)} ,
\end{multline*}
with $\cI_{\min, h} = \left[ r_{\min} - h^\beta , r_{\min} + h^\beta \right]$, $\alpha_m \in \CC^*$ such that  $ \widetilde{\Pi}_{h,m}  v_{h,m} = \alpha_m r^{m+1/2}$.\\
Moreover, $o(1)$ is independent of $m$.
\end{lemma}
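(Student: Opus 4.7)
The plan is to unpack the left-hand side via the explicit description of the projection and then apply a standard Laplace-type estimate.

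First, by Proposition \ref{propo.Dirac} and Definition \ref{def_projection_holo}, we have $e^{-\phi/h}\widetilde{\Pi}_{h,m}v_{h,m} = \alpha_m\, e^{-\phi/h} r^{m+1/2}$, so that
\[
\left\| e^{-\phi/h} \widetilde{\Pi}_{h,m}  v_{h,m} \right\|_{L^{2} \autopar{\cI_{\min, h}}}^2
= |\alpha_m|^2 \int_{r_{\min}-h^\beta}^{r_{\min}+h^\beta} e^{-2\phi(r)/h}\, r^{2m+1}\,\diff r.
\]
It therefore remains to estimate this elementary integral.

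Next, since $\phi$ is smooth and attains its minimum at $r_{\min}$ with $\phi'(r_{\min})=0$ and $\phi''(r_{\min})=\phi''_{\min}$, Taylor's formula on $\cI_{\min,h}$ gives $\phi(r) = \phi_{\min} + \tfrac{\phi''_{\min}}{2}(r-r_{\min})^2 + O(|r-r_{\min}|^3)$, where the $O$-constant depends only on $\phi$. Dividing by $h$ and using $|r-r_{\min}|\leq h^\beta$ with $\beta>1/3$ yields $|r-r_{\min}|^3/h \leq h^{3\beta-1} = o(1)$, hence
\[
e^{-2\phi(r)/h} = e^{-2\phi_{\min}/h}\, e^{-\phi''_{\min}(r-r_{\min})^2/h}\,(1+o(1)),
\]
uniformly on $\cI_{\min,h}$ and with $o(1)$ independent of $m$. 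For the monomial factor, the function $r\mapsto r^{2m+1}$ is monotonic on $\cI_{\min,h}\subset ]\rho_1,\rho_2[$, so its maximum is reached at $r_{\min}+\mathrm{sgn}(2m+1)h^\beta$; this bound is precisely the factor appearing in the statement and is valid pointwise for every $r\in\cI_{\min,h}$.

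Combining these two observations and bounding the Gaussian integral by extending to the whole real line,
\[
\int_{r_{\min}-h^\beta}^{r_{\min}+h^\beta} e^{-\phi''_{\min}(r-r_{\min})^2/h}\,\diff r
\;\leq\; \int_{\RR} e^{-\phi''_{\min}\tau^2/h}\,\diff\tau \;=\; \sqrt{\tfrac{\pi h}{\phi''_{\min}}},
\]
we obtain the claimed inequality, the $o(1)$ term collecting only the Taylor remainder from $\phi$, which is uniform in $m$. There is no real obstacle here; the only point requiring mild care is to record uniformity in $m$ at each step, which follows from the fact that the sole $m$-dependence has been factored out into the monomial, bounded pointwise by its supremum.
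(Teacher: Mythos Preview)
Your proof is correct and follows essentially the same route as the paper: write the projected norm explicitly as $|\alpha_m|^2\int_{\cI_{\min,h}} e^{-2\phi/h} r^{2m+1}\,\diff r$, Taylor-expand $\phi$ near $r_{\min}$ using $\beta>1/3$ to control the cubic remainder, bound the monomial by its endpoint value, and extend the Gaussian to $\RR$. The paper's argument is the same in structure and in the specific estimates used.
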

\begin{proof}
Let $\beta \in ]1/3, 1/2[$, $h >0$ and $v_{h,m}$ an eigenfunction associated to $\lambda_{1,m} (h)$. \\
According to Proposition \ref{propo.Dirac}, for all $m \in \RR$ there exists $\alpha_m \in \CC^*$ such that $ \widetilde{\Pi}_{h,m}  v_{h,m} = \alpha_m r^{m+1/2}$. Then, by Lemma \ref{lemme.premiere_maj_norme}, we have
\begin{equation*}
    \left\| e^{-\phi/h} \widetilde{\Pi}_{h,m}  v_{h,m} \right\|_{L^{2} \autopar{\cI_{\min, h}}}^2 = \valabs{\alpha_m}^2 \int_{\cI_{\min , h}}   e^{-2\phi / h}  r^{2m +1} \diff r ,
\end{equation*}
with $\cI_{\min , h} = \left[ r_{\min} - h^\beta , r_{\min} + h^\beta \right]$. \\
As in the proof of Lemma \ref{lem:minoration_denominateur}, we obtain
$$
\phi ( r) = \phi_{\min} + \frac{\autopar{r-r_{\min}}^2}{2} \phi''_{\min} + \cO \autopar{h^{3\beta}}.
$$
Thus,
\begin{align*}
    \int_{\cI_{\min , h}}  & e^{-2\phi / h} r^{2m +1} \diff r \leq   e^{-2\phi_{\min} / h} \int_{\cI_{\min , h}}   e^{- \frac{\autopar{r - r_{\min}}^{2}}{h} \phi_{\min}^{''} }  r^{2m +1} \diff r \autopar{1 + \cO \autopar{h^{3\beta -1}}} \\
    & \leq \sqrt{\frac{h}{\phi''_{\min}}} \autopar{r_{\min} + \mathrm{sgn}(2m+1) h^{\beta} }^{2m+1} e^{-2\phi_{\min} / h} \int_\RR e^{-\tau^{2}} \diff r  \autopar{1 + \cO \autopar{h^{3\beta -1}}} .
\end{align*}
We deduce that
\begin{multline*}
\int_{\cI_{\min , h}}   e^{-2\phi / h} r^{2m +1} \diff r \leq \\
\sqrt{\frac{\pi}{\phi''_{\min}}} \autopar{r_{\min} + \mathrm{sgn}(2m+1) h^{\beta} }^{2m+1} \sqrt{h} \; e^{-2 \phi_{\min} / h} \autopar{1 + \cO \autopar{h^{3\beta -1}}} ,
\end{multline*}
and the conclusion follows.
\end{proof}

\subsection{Uniform convergence on any compact}
In this part we prove Proposition \ref{propo.CVU.coercivite} (\ref{pt1.propCVU_coercivite}).
\begin{proposition} \label{proposition.cvu}
The sequence $\autopar{f_{1,h} }_{h \in ]0, 1] }$ is uniformly convergent on any compact to $f\colon \RR \rightarrow \RR^+$ when $h$ tends to $0$ where
\begin{equation*} 
f\colon m \longmapsto 2 \sqrt{\frac{\phi''_{\min}}{\pi}} \autopar{\partial_n\phi\autopar{\rho_1} \autopar{\frac{\rho_1}{r_{\min}} }^{2m+1} + \partial_n\phi\autopar{\rho_2} \autopar{\frac{\rho_2}{r_{\min}} }^{2m+1}  } \, .
\end{equation*}
\end{proposition}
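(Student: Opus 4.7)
The proof compares matching upper and lower bounds for $f_{1,h}$ on any fixed compact $K\subset\RR$.

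\textbf{Upper bound.} Dividing Proposition \ref{propo.borne_sup_lambda1m} by $\sqrt{h}\,e^{2\phi_{\min}/h}$ gives $f_{1,h}(m)\leq C_m(h)(1+o_{h\to 0}(1))$. Since $m$ remains in $K$, the exponents $2m+1$ are uniformly bounded while $h^\alpha,h^\beta\to 0$, so $C_m(h)\to f(m)$ uniformly on $K$. This yields $\limsup_{h\to 0}f_{1,h}(m)\leq f(m)$ uniformly on $K$, and in particular the existence of $K_0>0$ such that $f_{1,h}(m)\leq K_0$ for all $(h,m)\in\,]0,h_0]\times K$. This a priori bound will be used repeatedly below.

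\textbf{Lower bound via the Rayleigh quotient.} Fix $\gamma\in\,]1/2,1[\,$ and set $\delta=h^\gamma$, so that both $\delta/h\to+\infty$ and $\delta^2/h\to 0$. Let $v_{h,m}$ be a ground state of $\cL^-_{h,m}$ and write $\widetilde{\Pi}_{h,m}v_{h,m}=\alpha_m\,r^{m+1/2}$ (Proposition \ref{propo.Dirac}). The coefficient $\alpha_m$ does not vanish: otherwise $e^{-\phi/h}v_{h,m}$ lies in $\ker^\perp$ of the relevant Dirac operator, and the first inequality of Proposition \ref{proposition_inegalite_ellipticite} would force $\lambda_{1,m}(h)\geq 2hB_0$, contradicting the upper bound for small $h$. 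Applying Lemma \ref{lemme_reduction_num} to the numerator of the Rayleigh quotient of Proposition \ref{corollaire.minmax.mom_reel}, and Lemmas \ref{lemme.premiere_maj_norme} and \ref{lemme.seconde_maj_norme} to the denominator, one obtains
\begin{equation*}
f_{1,h}(m)\geq \frac{E_m^{\mathrm{int}}(h,\delta)\,|v_{h,m}(\rho_1+\delta)|^2+E_m^{\mathrm{ext}}(h,\delta)\,|v_{h,m}(\rho_2-\delta)|^2}{|\alpha_m|^2\,\sqrt{\pi/\phi''_{\min}}\,(r_{\min}+\mathrm{sgn}(2m+1)h^\beta)^{2m+1}}(1+o(1)),
\end{equation*}
where the factor $R_m(h)^2$ from Lemma \ref{lemme.premiere_maj_norme} tends to $1$ uniformly on $K$ thanks to the a priori bound $f_{1,h}(m)\leq K_0$.

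\textbf{Replacing the boundary values.} The monomial approximation of Proposition \ref{propo:approx_holo} gives at each of the boundary points $\rho_i\pm\delta$
\begin{equation*}
|v_{h,m}(\rho_i\pm\delta)-\alpha_m(\rho_i\pm\delta)^{m+1/2}|^2\leq C\,e^{2\phi(\rho_i\pm\delta)/h}\,h^{-3/2}\,\lambda_{1,m}(h)\,(1+f_{1,h}(m))^2\,\|e^{-\phi/h}v_{h,m}\|^2_{L^2(\cI_\delta)}.
\end{equation*}
Inserting $\lambda_{1,m}(h)=O(\sqrt{h}\,e^{2\phi_{\min}/h})$, the bound $\|e^{-\phi/h}v_{h,m}\|^2\leq C|\alpha_m|^2\,r_{\min}^{2m+1}\sqrt{h}\,e^{-2\phi_{\min}/h}$ provided by Lemma \ref{lemme.seconde_maj_norme}, and the Taylor expansion $\phi(\rho_i\pm\delta)=\mp\delta\,\partial_n\phi(\rho_i)+O(\delta^2)$, the ratio of this error to $|\alpha_m(\rho_i\pm\delta)^{m+1/2}|^2$ is $O(h^{-1/2}\,e^{-2\delta\partial_n\phi(\rho_i)/h})$, exponentially small uniformly on $K$ because $\delta/h\to\infty$. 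Substituting $|v_{h,m}(\rho_i\pm\delta)|^2=|\alpha_m|^2(\rho_i\pm\delta)^{2m+1}(1+o(1))$ in the displayed inequality, cancelling $|\alpha_m|^2$, and noticing from the explicit formulas of Lemma \ref{lemme_reduction_num} that $E_m^{\mathrm{int}}(h,\delta)\to 2\partial_n\phi(\rho_1)$ and $E_m^{\mathrm{ext}}(h,\delta)\to 2\partial_n\phi(\rho_2)$ uniformly on $K$ (the exponentials $e^{-2\delta_i\rho_i\partial_n\phi(\rho_i)/h}$ vanish, while $e^{-\delta_i}$ and $e^{2m\delta_i\mathds{1}_{m\geq 0}}$ tend to $1$ and $2mh/\rho_i\to 0$), we deduce $\liminf_{h\to 0}f_{1,h}(m)\geq f(m)$ uniformly on $K$, matching the upper bound.

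\textbf{Main obstacle.} The principal difficulty is uniformity in $m$: the error in the monomial approximation of Proposition \ref{propo:approx_holo} is quadratic in $f_{1,h}(m)$, and must be compared at the boundary to a quantity involving the unknown $|\alpha_m|$. The two ingredients that make the bookkeeping close are the a priori bound $f_{1,h}(m)\leq K_0$ established first from the upper bound, and the choice $\delta=h^\gamma$ with $\gamma\in\,]1/2,1[\,$, which is the unique window in which Lemma \ref{lemme_reduction_num} applies ($\delta^2/h\to 0$) while the boundary-approximation error remains exponentially small ($\delta/h\to\infty$).
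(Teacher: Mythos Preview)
Your proof is correct and follows essentially the same strategy as the paper's: first extract the upper bound $f_{1,h}(m)\leq f(m)(1+o(1))$ uniformly on compacts from Proposition~\ref{propo.borne_sup_lambda1m}, then use this a~priori bound to close the lower-bound argument via the monomial approximation of Proposition~\ref{propo:approx_holo} and the boundary reduction of Lemma~\ref{lemme_reduction_num}. The only organizational differences are minor: the paper packages the limits $E_m^{\mathrm{int/ext}}\to 2\partial_n\phi(\rho_i)$ into a separate Lemma~\ref{lemme1_cvu_min_energie} and carries out the boundary replacement through a triangle inequality on the weighted norm $\cN_{\partial_n\phi}$ (Lemma~\ref{lemme2_cvu_min_energie}), whereas you handle both steps inline; and your argument for $\alpha_m\neq 0$ invokes the first inequality of Proposition~\ref{proposition_inegalite_ellipticite} directly, while the paper derives it from Proposition~\ref{propo:approx_holo} (Lemma~\ref{lemme:injectivite_proj_1}).
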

When we take a compact, the upper bound of Proposition \ref{propo.borne_sup_lambda1m} ensures the validity of the following lemma.
\begin{lemma} \label{lemme.majoration_sur_compact}
There exists $h_0 > 0$ such that for all $ h \in ]0, h_0]$ and for all $m \in \RR$, we have
$$
0 < f_{1,h} (m) \leq f(m) \; (1+o_{h \rightarrow 0}(1)),
$$
with $o_{h \rightarrow 0}(1)$ uniform in $m$ on all compacts of $\RR$.
\end{lemma}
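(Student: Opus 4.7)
The plan is to combine the upper bound of Proposition \ref{propo.borne_sup_lambda1m} with a uniform asymptotic analysis of its prefactor $C_m(h)$. Dividing the bound
\[
\lambda_{1,m}(h) \leq C_m(h)\,\sqrt{h}\,e^{2\phi_{\min}/h}\,(1+o_{h\to 0}(1))
\]
by $\sqrt{h}\,e^{2\phi_{\min}/h}$, and observing that the $o_{h\to 0}(1)$ there is already $m$-independent, we get $f_{1,h}(m)\leq C_m(h)(1+o_{h\to 0}(1))$. The whole statement then reduces to verifying that $C_m(h)\to f(m)$ uniformly on each compact $K\subset\RR$.

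To establish this uniform convergence, I would rewrite each factor of $C_m(h)$ as a small perturbation of the corresponding factor of $f(m)$. For instance, for $m\in K$ and $h$ small enough,
\[
\frac{\rho_1+h^\alpha \mathds{1}_{m\geq 0}}{r_{\min}-\mathrm{sgn}(2m+1)h^\beta} = \frac{\rho_1}{r_{\min}}\bigl(1+O(h^\beta)\bigr),
\]
with an implicit constant depending only on $K$ (here $\beta<\alpha$ so the $h^\beta$ term dominates). Since $m\in K$ is bounded, raising both sides to the power $2m+1$ preserves the form $\bigl(1+O(h^\beta)\bigr)$ uniformly on $K$. The exterior factor is handled identically. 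Summing the two terms yields $C_m(h)=f(m)(1+o_{h\to 0}(1))$ uniformly in $m\in K$, which combined with the previous step gives the claimed upper bound.

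For the strict positivity $f_{1,h}(m)>0$, I would recall that $\cL^-_{h,m}=d_{h,\widetilde m}d^\times_{h,\widetilde m}$ is nonnegative self-adjoint with compact resolvent, so $\lambda_{1,m}(h)\geq 0$. By Proposition \ref{propo.Dirac}, $\ker(d^\times_{h,m})$ is spanned by $r\mapsto e^{-\phi/h}r^{m+1/2}$, which does not vanish at $r=\rho_1$ or $r=\rho_2$ and hence does not lie in the Dirichlet domain. Thus $\ker(\cL^-_{h,m})=\{0\}$, the spectrum is discrete, and $\lambda_{1,m}(h)>0$.

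The only mildly delicate point is that $C_m(h)$ involves two indicator functions, $\mathds{1}_{m\geq 0}$ and $\mathds{1}_{m<0}$, which are discontinuous in $m$; the uniformity on compacts might at first glance look threatened near $m=0$ and $m=-1/2$. This turns out not to be a real obstacle: the indicators are bounded by $1$, so in either case the numerator is $\rho_i+O(h^\alpha)$, and $\mathrm{sgn}(2m+1)$ only flips the sign of the vanishing correction $h^\beta$ in the denominator. Consequently the $O(h^\beta)$ estimate above holds uniformly across the jumps, and the uniform convergence $C_m(h)\to f(m)$ goes through cleanly.
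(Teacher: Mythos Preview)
Your proposal is correct and follows exactly the route the paper intends: the paper does not give a separate proof but simply states that the lemma ``is a consequence of the upper bound of Proposition \ref{propo.borne_sup_lambda1m}'' once one restricts $m$ to a compact, and you have written out precisely that argument---dividing by $\sqrt{h}\,e^{2\phi_{\min}/h}$, noting the $m$-independence of the remainder in Proposition \ref{propo.borne_sup_lambda1m}, and checking $C_m(h)\to f(m)$ uniformly on compacts. Your treatment of the indicator discontinuities and your justification of strict positivity via $\ker(d^\times_{h,\widetilde m})\cap H^1_0=\{0\}$ are both sound.
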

Let us start by noting the following result.
\begin{lemma} \label{lemme:injectivite_proj_1}
For all $m$ included in a compact of $\RR$, the projection $\widetilde{\Pi}_{h,m}$, defined in Definition \ref{def_projection_holo}, is injective when it acts on the eigenspace associated with $\lambda_{1,m} (h)$.
\end{lemma}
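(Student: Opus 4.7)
The plan is to argue by contradiction using the reverse estimate of Lemma \ref{lemme.premiere_maj_norme}. Fix a compact $K \subset \RR$ and suppose that, for some $m \in K$ and some small $h > 0$, there exists an eigenfunction $v_{h,m}$ associated to $\lambda_{1,m}(h)$ with $\widetilde{\Pi}_{h,m}\, v_{h,m} = 0$. Applying Lemma \ref{lemme.premiere_maj_norme} to this eigenfunction would then yield
$$\left\| e^{-\phi/h} v_{h,m} \right\|_{L^{2} (\rho_{1}, \rho_{2})} \;\leq\; R_m (h)\, \left\| e^{-\phi/h} \widetilde{\Pi}_{h,m}\, v_{h,m} \right\|_{L^{2} \autopar{\cI_{\min,h}}} \;=\; 0,$$
forcing $v_{h,m} \equiv 0$, which contradicts that $v_{h,m}$ is an eigenfunction. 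The whole argument therefore reduces to checking that $R_m(h)$ is a well-defined finite positive number for $m \in K$ and $h$ small enough.

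For this it suffices to show that the products $f_{1,h}(m)\,\cO(h^{\infty})$ and $f_{1,h}(m)^2\,\cO(h^{\infty})$ entering the definition of $R_m(h)$ tend to $0$ uniformly for $m \in K$. By Lemma \ref{lemme.majoration_sur_compact}, there exists $h_0 > 0$ such that $0 < f_{1,h}(m) \leq f(m)(1+o_{h\to 0}(1))$, with the $o(1)$ uniform on $K$; since $f$ is continuous and $K$ is compact, $\sup_{h \in (0,h_0],\, m \in K} f_{1,h}(m) < +\infty$. As the $\cO(h^{\infty})$ remainders in Lemma \ref{lemme.premiere_maj_norme} are independent of $m$, the expression inside the square root and the outer factor defining $R_m(h)$ remain close to $1$ uniformly in $m \in K$ as $h \to 0$, so $R_m(h)$ is indeed finite (and in fact tends to $1$). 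This closes the contradiction and proves the claimed injectivity; the only subtle point is the uniformity of the $\cO(h^\infty)$ remainders in $m$, which is precisely what is recorded in the statement of Lemma \ref{lemme.premiere_maj_norme}.
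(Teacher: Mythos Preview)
Your proof is correct and follows essentially the same route as the paper. The paper argues directly from Proposition \ref{propo:approx_holo} (together with Lemma \ref{lemme.majoration_sur_compact}) to obtain an inequality of the form $\bigl(1 - C e^{\phi_{\min}/h}\bigr)\,\|e^{-\phi/h}v_{h,m}\|_{L^2(\cI_\delta)}\le 0$, whereas you invoke Lemma \ref{lemme.premiere_maj_norme}, which is precisely the packaged version of that same estimate; the control of $R_m(h)$ via Lemma \ref{lemme.majoration_sur_compact} is the same boundedness step the paper uses.
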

\begin{proof}
Let $\cM$ a compact of $ \RR$. Let $h>0$, $m\in \cM$ and $v_{h,m}$ an eigenfunction associated to $\lambda_{1,m} (h)$ such that $\widetilde{\Pi}_{h,m} v_{h,m} = 0$.\\
According to Proposition \ref{propo:approx_holo}, there exist $C, h_0, \delta_0 >0$ such that for all $\delta \in ]0,\delta_0]$ and $h \in ]0 , h_0]$, we have
\begin{align*}
\left\|e^{-\phi/h} v_{h,m}\right\|_{L^{2} (\cI_\delta )} & \leq  \left\|e^{-\phi/h} \autopar{\Id - \widetilde{\Pi}_{h,m}} v_{h,m}\right\|_{L^{2} (\cI_\delta )} + \left\|e^{-\phi/h} \widetilde{\Pi}_{h,m} v_{h,m}\right\|_{L^{2} (\cI_\delta )} \\
& \leq C e^{\phi_{\min}/h} \left( f_{1,h} (m) + \left( f_{1,h} (m) \right)^2 \right) \left\|e^{-\phi/h} v_{h,m}\right\|_{L^{2} (\cI_\delta )} ,
\end{align*}
with $\widetilde{\Pi}_{h,m}$ defined in Definition \ref{def_projection_holo} and $\cI_\delta = [\rho_1 + \delta , \rho_2 - \delta]$. \\
Using Lemma \ref{lemme.majoration_sur_compact}, there exists another constant $C\left( \cM \right)>0$ such that 
$$
\left\|e^{-\phi/h} v_{h,m}\right\|_{L^{2} (\cI_\delta )} \autopar{ 1 - C e^{\phi_{\min}/h}} \leq 0 .
$$
Thus $v_{h,m} = 0 $ on $\cI_{\delta}$ which ensures the injectivity of $\widetilde{\Pi}_{h,m}$.
\end{proof}
The estimate of the $L^2$-norm follows directly from Lemmas \ref{lemme.premiere_maj_norme} \& \ref{lemme.seconde_maj_norme} and from Lemma \ref{lemme.majoration_sur_compact}. 
\begin{lemma} \label{lemme.cvu_majoration_L2norme}
Let $h>0$, $m\in\mathbb{R}$ and $v_{h,m}$ an eigenfunction associated to $\lambda_{1,m} (h)$.
\begin{equation*}
\left\| e^{-\phi/h} v_{h,m} \right\|^2 \leq \valabs{\alpha_m}^2 \sqrt{\frac{\pi}{\phi''_{\min}}} \, r_{\min}^{2m+1} \sqrt{h} \; e^{-2 \phi_{\min} / h} \autopar{1 + o \autopar{1}}   \, ,
\end{equation*}
with $\alpha_m \in \CC^*$ such that $ \widetilde{\Pi}_{h,m}  v_{h,m} = \alpha_m r^{m+1/2}$.\\
Moreover, $o_{h \rightarrow 0}(1)$ is uniform in $m$ on all compacts of $\RR$.
\end{lemma}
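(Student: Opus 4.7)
The plan is to simply chain together Lemmas \ref{lemme.premiere_maj_norme}, \ref{lemme.seconde_maj_norme} and \ref{lemme.majoration_sur_compact}, checking that all the remainder terms remain controllable uniformly when $m$ ranges over a compact subset of $\RR$.

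First, apply Lemma \ref{lemme.premiere_maj_norme} to get
\[
\norme{e^{-\phi/h} v_{h,m}}_{L^2(\rho_1,\rho_2)}^2 \leq R_m(h)^2 \, \norme{e^{-\phi/h} \widetilde{\Pi}_{h,m} v_{h,m}}_{L^2(\cI_{\min,h})}^2 \, ,
\]
then bound the right-hand side via Lemma \ref{lemme.seconde_maj_norme} (with the same $\beta \in ]1/3,1/2[$) to obtain, writing $\widetilde{\Pi}_{h,m} v_{h,m} = \alpha_m \, r^{m+1/2}$,
\[
\norme{e^{-\phi/h} v_{h,m}}^2 \leq R_m(h)^2 \, \valabs{\alpha_m}^2 \sqrt{\frac{\pi}{\phi''_{\min}}} \, \bigl(r_{\min} + \mathrm{sgn}(2m+1)h^\beta\bigr)^{2m+1} \sqrt{h} \, e^{-2\phi_{\min}/h}(1+o(1))\, .
\]

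It remains to see that the two extra factors collapse to $1$ uniformly on any compact $K \subset \RR$. For the geometric factor, a Taylor expansion gives
\[
\bigl(r_{\min} \pm h^\beta\bigr)^{2m+1} = r_{\min}^{2m+1}\bigl(1 \pm h^\beta/r_{\min}\bigr)^{2m+1} = r_{\min}^{2m+1}\bigl(1 + \cO(h^\beta)\bigr) \, ,
\]
and since $|2m+1|$ is bounded on $K$, the $\cO(h^\beta)$ is uniform in $m\in K$. For $R_m(h)$, Lemma \ref{lemme.majoration_sur_compact} ensures $f_{1,h}(m) \leq f(m)(1+o(1))$ with $f$ continuous, hence bounded on $K$; therefore $f_{1,h}(m)$ and $f_{1,h}(m)^2$ are uniformly bounded on $K$, and the $\cO(h^\infty)$ remainders appearing in the definition of $R_m(h)$ yield $R_m(h)^2 = 1 + \cO(h^\infty)$ uniformly in $m\in K$.

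Combining these observations produces exactly the announced estimate, with an $o(1)$ remainder that is uniform in $m$ on any compact of $\RR$. No genuine analytic obstacle should appear here since all three input lemmas are already in the required form: the only verification is that the two sources of non-uniformity, namely $|2m+1|$ and the bound on $f_{1,h}(m)$, are both tame on compact sets of the momentum parameter.
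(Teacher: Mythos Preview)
Your proof is correct and follows exactly the same route as the paper: apply Lemmas \ref{lemme.premiere_maj_norme} and \ref{lemme.seconde_maj_norme} in succession, then observe that $R_m(h)=1+o(1)$ uniformly on compacts. The paper's proof is a one-liner that omits the two verifications you spell out (the Taylor expansion of $(r_{\min}\pm h^\beta)^{2m+1}$ and the use of Lemma \ref{lemme.majoration_sur_compact} to bound $f_{1,h}(m)$ in the expression for $R_m(h)$), but these are precisely the implicit details behind its claim that $R_m(h)=1+o(1)$, so your version is simply a more explicit rendering of the same argument.
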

\begin{proof}
It is enough to apply Lemmas \ref{lemme.premiere_maj_norme} and \ref{lemme.seconde_maj_norme} by noticing that when we take a compact we have
$$
R_m (h) = 1+ o_{h \rightarrow 0} (1),
$$
with $o_{h \rightarrow 0}(1)$ uniform in $m$ on all compacts of $\RR$.
\end{proof}
Under the compactness assumptions, we can simplify the lower bound of Lemma \ref{lemme_reduction_num}.
\begin{lemma} \label{lemme1_cvu_min_energie}
There exist $h_0 , \delta_0 > 0$ such that for all $ h \in ]0, h_0]$, for all $ \delta \in ]0, \delta_0]$, for all $m \in \RR$ and for all $v \in H^1_0$ we have
\begin{equation*}
\left\| e^{-\phi / h} h \autopar{ \partial_r + \frac{m + 1/2}{r} } v \right\|^2  \geq 2 h \; \cN_{\partial_{n} \phi} (V)^2  \autopar{1+ o \autopar{1}} ,
\end{equation*}
when $\delta / h \rightarrow + \infty$ and $\delta^2 / h \rightarrow 0$ with $\cN_{\partial_{n} \phi} ( \cdot )$ defined in Notation \ref{norme_l2_poids} and 
$$
V= \autopar{ v(\rho_1 + \delta) , v( \rho_2 - \delta ) } .
$$
Moreover, $o_{h \rightarrow 0}(1)$ is uniform in $m$ on all compacts of $\RR$.
\end{lemma}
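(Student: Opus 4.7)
The plan is to deduce this lemma directly from the explicit boundary lower bound already established in Lemma \ref{lemme_reduction_num}. That lemma gives, in the same asymptotic regime $\delta/h \to +\infty$, $\delta^2/h \to 0$, the inequality
\[
\left\| e^{-\phi/h} h \autopar{\partial_r + \frac{m+1/2}{r}} v \right\|^2 \geq h \autopar{ E_m^{int}(h,\delta) \valabs{v(\rho_1+\delta)}^2 + E_m^{ext}(h,\delta)\valabs{v(\rho_2-\delta)}^2 }\autopar{1+o(1)}.
\]
It therefore suffices to verify that $E_m^{int}(h,\delta) \to 2\partial_n\phi(\rho_1)$ and $E_m^{ext}(h,\delta) \to 2\partial_n\phi(\rho_2)$, with the convergence uniform in $m$ as $m$ ranges over a fixed compact $K \subset \RR$.

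First I would expand $\delta_1 = \ln(1+\delta/\rho_1) = \delta/\rho_1 + O(\delta^2)$ and $\delta_2 = -\ln(1-\delta/\rho_2) = \delta/\rho_2 + O(\delta^2)$, both of which vanish as $\delta \to 0$. This immediately gives $e^{-\delta_1} \to 1$ and $e^{\delta_2} \to 1$; and, for $m \in K$, the factors $e^{\pm 2m\delta_i}$ tend to $1$ uniformly in $m$ since $m\delta_i = O(\delta)$ uniformly on $K$. In the numerators, the correction $2mh/\rho_i$ also tends to zero uniformly in $m \in K$ because $h \to 0$.

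The decisive point for the denominators is that, using Hopf's lemma (Section \ref{subsubsection.phi}) to ensure $\partial_n\phi(\rho_i) > 0$,
\[
\frac{2\delta_i \rho_i \partial_n\phi(\rho_i)}{h} \sim \frac{2\delta\,\partial_n\phi(\rho_i)}{h} \to +\infty
\]
under the hypothesis $\delta/h \to +\infty$, so $e^{-2\delta_i \rho_i \partial_n\phi(\rho_i)/h}$ decays exponentially to zero. Putting these together, a direct inspection of the piecewise-defined formulas of Lemma \ref{lemme_reduction_num} for both signs of $m$ gives
\[
E_m^{int}(h,\delta) = 2\partial_n\phi(\rho_1)(1+o(1)), \qquad E_m^{ext}(h,\delta) = 2\partial_n\phi(\rho_2)(1+o(1)),
\]
with $o(1)$ uniform in $m \in K$. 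Substituting back and recalling $\cN_{\partial_n\phi}(V)^2 = \partial_n\phi(\rho_1)\valabs{v(\rho_1+\delta)}^2 + \partial_n\phi(\rho_2)\valabs{v(\rho_2-\delta)}^2$ yields the claimed estimate.

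There is no genuine obstacle here: the analytic content (integration by parts, the one-dimensional weighted Hardy-type estimate used to extract the boundary trace) is already contained in Lemma \ref{lemme_reduction_num}. What remains is purely a careful asymptotic check of the explicit rational expressions for $E_m^{int}$ and $E_m^{ext}$, and the only mildly delicate point is making sure that the $m$-dependent factors $e^{\pm 2m\delta_i}$ and $2mh/\rho_i$ are controlled uniformly, which is exactly why $m$ is restricted to a compact in the uniformity statement.
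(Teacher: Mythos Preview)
Your proposal is correct and follows exactly the approach the paper intends: the paper does not give an explicit proof of this lemma but simply introduces it with the remark ``Under the compactness assumptions, we can simplify the lower bound of Lemma \ref{lemme_reduction_num},'' and your argument carries out precisely that simplification. Your asymptotic analysis of $E_m^{int}$ and $E_m^{ext}$ is accurate, and you correctly identify that the only place compactness of $m$ is used is to control the factors $e^{\pm 2m\delta_i}$ and $2mh/\rho_i$ uniformly.
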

We can then minimize the energy, uniformly in $m$, by the trace of the projection of the eigenfunctions on the kernel of $d_{h,m}^\times$.
\begin{lemma} \label{lemme2_cvu_min_energie}
There exists $h_0 > 0$ such that for all $ h \in ]0, h_0]$, for all $m \in \RR$ and for $v_{h,m}$ an eigenfunction associated to $\lambda_{1,m} (h)$ we have
\begin{equation*}
\lambda_{1,m} (h) \, \left\| e^{-\phi/h} v_{h,m} \right\|^2 \geq 2h \valabs{\alpha_m}^2 \autopar{\partial_n \phi \autopar{\rho_1} \rho_1^{2m+1} + \partial_n \phi \autopar{\rho_2} \rho_2^{2m+1} } (1 + o(1)),
\end{equation*}
with $\alpha_m \in \CC^*$ such that $ \widetilde{\Pi}_{h,m}  v_{h,m} = \alpha_m r^{m+1/2}$. \\
Moreover, $o_{h \rightarrow 0}(1)$ is uniform in $m$ on all compacts of $\RR$.
\end{lemma}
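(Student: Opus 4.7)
The plan is to combine the boundary energy lower bound from Lemma \ref{lemme1_cvu_min_energie} with the monomial approximation of Proposition \ref{propo:approx_holo} to control the values of $v_{h,m}$ at the edges of $\cI_\delta$. Since $v_{h,m}$ is an eigenfunction of $\cL^{-}_{h,m}$, Proposition \ref{corollaire.minmax.mom_reel} yields
$$\lambda_{1,m}(h)\left\|e^{-\phi/h}v_{h,m}\right\|^2 = \left\|e^{-\phi/h}\, h\autopar{\partial_r+\tfrac{m+1/2}{r}}v_{h,m}\right\|^2.$$
Applying Lemma \ref{lemme1_cvu_min_energie} with $\delta=\delta(h)$ in the regime $\delta/h\to\infty$, $\delta^2/h\to 0$ (for instance $\delta = h|\ln h|$), I get
$$\lambda_{1,m}(h)\left\|e^{-\phi/h}v_{h,m}\right\|^2 \geq 2h\,\cN_{\partial_n\phi}(V_{h,\delta})^2(1+o(1)),$$
where $V_{h,\delta}=\autopar{v_{h,m}(\rho_1+\delta),\, v_{h,m}(\rho_2-\delta)}$ and the $o(1)$ is uniform in $m$ on compacts.

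Next, I would decompose $v_{h,m}=\alpha_m\, r^{m+1/2}+w_{h,m}$ with $w_{h,m}=(\Id-\widetilde{\Pi}_{h,m})v_{h,m}$. Inserting this into $\cN_{\partial_n\phi}(V_{h,\delta})^2$ isolates a main term whose value is $|\alpha_m|^2\lp\partial_n\phi(\rho_1)(\rho_1+\delta)^{2m+1}+\partial_n\phi(\rho_2)(\rho_2-\delta)^{2m+1}\rp$, converging as $\delta\to 0$ to
$$|\alpha_m|^2\lp\partial_n\phi(\rho_1)\rho_1^{2m+1}+\partial_n\phi(\rho_2)\rho_2^{2m+1}\rp,$$
uniformly in $m$ on compacts by continuity of $m\mapsto\rho_i^{m+1/2}$. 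The remainder consists of cross and quadratic terms involving $w_{h,m}(\rho_i\pm\delta)$, and the crux of the proof is to show that it is negligible compared to the main term, uniformly in $m$ on compacts.

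To estimate this remainder I plan to invoke the trace bound of Proposition \ref{propo:approx_holo}, which controls $\cN_{\partial_n\phi}(U_{h,m})$ by $Ch^{-3/4}\sqrt{\lambda_{1,m}(h)}(1+f_{1,h}(m))\|e^{-\phi/h}v_{h,m}\|_{L^2(\cI_\delta)}$. Using Lemma \ref{lemme.majoration_sur_compact} to substitute $\lambda_{1,m}(h)\leq f(m)\sqrt{h}e^{2\phi_{\min}/h}(1+o(1))$ and Lemma \ref{lemme.cvu_majoration_L2norme} to substitute $\|e^{-\phi/h}v_{h,m}\|^2 \leq C|\alpha_m|^2 r_{\min}^{2m+1}\sqrt{h}\,e^{-2\phi_{\min}/h}(1+o(1))$, both valid uniformly on compacts, I expect $\cN_{\partial_n\phi}(U_{h,m})$ to be of order $|\alpha_m|\,e^{\phi_{\min}/h}$ modulo tame powers of $h$. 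After stripping the weights $e^{-2\phi(\rho_i\pm\delta)/h}\sim e^{2\delta\partial_n\phi(\rho_i)/h}$ appearing in $U_{h,m}$ to recover the pointwise values $w_{h,m}(\rho_i\pm\delta)$, the resulting contribution to $\cN_{\partial_n\phi}(V_{h,\delta})^2$ stays exponentially smaller in $1/h$ than the main term, hence $o(1)$ uniformly in $m$ on compacts.

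The main obstacle will be the careful bookkeeping of these exponential weights: one must verify that the factor $e^{2\delta\partial_n\phi(\rho_i)/h}$ incurred in going from the weighted trace $U_{h,m}$ to the unweighted values of $w_{h,m}$ at $\rho_i\pm\delta$ remains dominated by $e^{2\phi_{\min}/h}$, so that the remainder is genuinely $o(1)$ uniformly on compacts in $m$. This forces the choice $\delta/h\to\infty$ to be only mildly divergent (e.g.\ logarithmically), which is compatible with the constraint $\delta^2/h\to 0$ of Lemma \ref{lemme1_cvu_min_energie}. Once these estimates are in place, combining the lower bound on $\cN_{\partial_n\phi}(V_{h,\delta})^2$, the $\delta\to 0$ limit of the main term, and the negligibility of the remainder yields the claimed inequality with uniform $o(1)$ in $m$ on compacts.
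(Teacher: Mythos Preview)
Your plan is essentially the paper's: lower bound the energy by boundary traces via Lemma~\ref{lemme1_cvu_min_energie}, then use Proposition~\ref{propo:approx_holo} (together with the compact bound of Lemma~\ref{lemme.majoration_sur_compact}) to replace $v_{h,m}$ by its monomial projection $\alpha_m r^{m+1/2}$ at the two boundary points of $\cI_\delta$.

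Two bookkeeping slips in your discussion, however, deserve correction. First, $\cN_{\partial_n\phi}(U_{h,m})$ is not of order $|\alpha_m|e^{\phi_{\min}/h}$: the exponential factors in $\sqrt{\lambda_{1,m}(h)}\lesssim h^{1/4}e^{\phi_{\min}/h}$ and $\|e^{-\phi/h}v_{h,m}\|\lesssim |\alpha_m|h^{1/4}e^{-\phi_{\min}/h}$ cancel, leaving only a polynomial bound $\cN_{\partial_n\phi}(U_{h,m})\leq C|\alpha_m|h^{-1/4}$. Second, the direction of the weight conversion is backwards: passing from $U_{h,m}$ to $w_{h,m}(\rho_i\pm\delta)$ multiplies by $e^{\phi(\rho_i\pm\delta)/h}\approx e^{-\delta\,\partial_n\phi(\rho_i)/h}$, which is \emph{small} (since $\phi(\rho_i\pm\delta)\approx -\delta\,\partial_n\phi(\rho_i)<0$), not large. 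Hence the remainder is $O\bigl(|\alpha_m|h^{-1/4}e^{-c\delta/h}\bigr)=|\alpha_m|\,\cO(h^\infty)$ for \emph{any} $\delta$ with $\delta/h\to\infty$, and the ``obstacle'' you anticipate does not arise; no logarithmic restriction on $\delta$ is needed.

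A minor streamlining: the paper does not invoke Lemma~\ref{lemme.cvu_majoration_L2norme} here. Once one has $\cN_{\partial_n\phi}\bigl((\Id-\Pi)V_{h,\delta}\bigr)\leq \cO(h^\infty)\sqrt{\lambda_{1,m}(h)}\,\|e^{-\phi/h}v_{h,m}\|$, this term is absorbed directly into the left-hand side $\sqrt{\lambda_{1,m}(h)}\,\|e^{-\phi/h}v_{h,m}\|$ via the triangle inequality, giving the conclusion without ever estimating $\|e^{-\phi/h}v_{h,m}\|$ in terms of $|\alpha_m|$.
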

\begin{proof} Let $\cM$ a compact of $\RR$. Let $h,\delta>0$, $m \in \cM$ and $v_{h,m}$ an eigenfunction associated to $\lambda_{1,m} (h)$. Let us further assume that $\delta / h \rightarrow + \infty$ $\delta^2 / h \rightarrow 0$. 
\begin{enumerate}[label = \roman*)]
\item When we take $v_{h,m}$ as an eigenfunction, Lemma \ref{lemme1_cvu_min_energie} becomes 
\begin{equation*}
\sqrt{\lambda_{1,m} (h)} \, \left\|e^{-\phi/h} v_{h,m}\right\| \geq \sqrt{2h} \, \cN_{\partial_{n} \phi} \autopar{V_{h,m}} (1 + o(1)) , 
\end{equation*}
with $\cN_{\partial_{n} \phi} ( \cdot )$ the norm defined in Notation \ref{norme_l2_poids} and
$$
V_{h,m}= \autopar{ v_{h,m}(\rho_1 + \delta) , v_{h,m}( \rho_2 - \delta ) }.
$$ 
\item \label{cvu_point_ii} According to Proposition \ref{propo:approx_holo} and Lemma \ref{lemme.majoration_sur_compact}, we have
\begin{equation*}
\cN_{\partial_{n} \phi} \left( U_{h,m} \right)  \leq C h^{-3/4} \sqrt{\lambda_{1,m} (h)}  \left\|e^{-\phi/h} v_{h,m}\right\|_{L^{2} \autopar{\cI_{\delta}}} (1 + o(1)) \; ,
\end{equation*}    
with $\cI_\delta = [\rho_1 + \delta , \rho_2 - \delta ]$, $o(1)$ uniform in $m$ on all compacts of $\RR$ and
\begin{equation*}
U_{h,m} = \left( e^{-2\phi(\rho_{1}+ \delta)/h}\left( \Id - \widetilde{\Pi}_{h,\delta}  \right)v_{h,m} \autopar{\rho_1 + \delta}, e^{-2\phi(\rho_{2}- \delta)/h}\left( \Id - \widetilde{\Pi}_{h,\delta}  \right)v_{h,m} \autopar{\rho_2 - \delta} \right) .
\end{equation*}
Using the Taylor expansion of $\phi$ in the neighborhood of $\rho_1$ and $\rho_2$ and by writing $N = \min \autopar{\partial_n \phi (\rho_1) , \partial_n \phi (\rho_2)} >0 $, we get
\begin{equation*}
\cN_{\partial_{n} \phi}  \autopar{\autopar{\Id - \Pi}V_{h,m}}  \leq C h^{-3/4} e^{- N \delta /h} \sqrt{\lambda_{1,m} (h)}  \left\|e^{-\phi/h} v_{h,m}\right\|_{L^{2} \autopar{\cI_{\delta}}} (1 + o(1)) \; ,
\end{equation*}
with $\Pi V_{h,m}= \autopar{ \widetilde{\Pi}_{h,m }v_{h,m}(\rho_1 + \delta) , \widetilde{\Pi}_{h,m }v_{h,m}( \rho_2 - \delta ) } $. 
\item Then the triangular inequality ensures that
\begin{equation*}
\cN_{\partial_{n} \phi} \autopar{\autopar{\Id - \Pi}V_{h,m}} \geq \valabs{\cN_{\partial_{n} \phi} \autopar{V_{h,m}} - \cN_{\partial_{n} \phi} \autopar{\Pi V_{h,m}} } .
\end{equation*}
Thus, by \ref{cvu_point_ii}
\begin{equation*}
\cN_{\partial_{n} \phi} \autopar{V_{h,m}} \geq \cN_{\partial_{n} \phi} \autopar{\Pi V_{h,m}} - \cO \autopar{h^\infty} \sqrt{\lambda_{1,m} (h)} \left\|e^{-\phi/h} v_{h,m}\right\|_{L^{2} \autopar{\cI_{\delta}}} .
\end{equation*}
Proposition \ref{propo:localisation_fct_propre} allows us to obtain for $m$ bounded that
\begin{equation*}
\left\|e^{-\phi / h } v_{h,m}\right\|_{L^{2} \autopar{\cI_{\delta}}}  = \left\|e^{-\phi / h } v_{h,m}\right\| (1+o(1)) \, ,
\end{equation*}
with $o_{h \rightarrow 0}(1)$ uniform in $m$ on all compacts of $\RR$.\\
Then,
\begin{equation*}
\lambda_{1,m} (h) \, \left\| e^{-\phi/h} v_{h,m} \right\|^2 \geq 2h \valabs{\alpha_m}^2 \autopar{\partial_n \phi \autopar{\rho_1} \rho_1^{2m+1} + \partial_n \phi \autopar{\rho_2} \rho_2^{2m+1} } (1 + o(1)),
\end{equation*}
with $\alpha_m \in \CC^*$ such that $ \widetilde{\Pi}_{h,m}  v_{h,m} = \alpha_m r^{m+1/2}$ and $o_{h \rightarrow 0}(1)$ uniform in $m$ on all compacts of $\RR$.
\end{enumerate}
\end{proof}
\begin{proof}[Proof of Proposition \ref{proposition.cvu}] Recall that
$$
\lambda_{1,m} (h) = f_{1,h} (m) \sqrt{h} \, e^{2 \phi_{\min}/ h} .
$$
By combining Lemmas \ref{lemme.cvu_majoration_L2norme} and \ref{lemme2_cvu_min_energie}, we have the existence of $h_0 >0$ such that for all $h \in ]0, h_0]$ 
$$
 f_{1,h} (m) \geq f(m) \; (1+o_{h \rightarrow 0}(1)),
$$
with $o_{h \rightarrow 0}(1)$ uniform in $m$ on all compacts of $\RR$. \\
Lastly, Lemma \ref{lemme.majoration_sur_compact} gives us
$$
 f_{1,h} (m) = f(m) \; (1+o_{h \rightarrow 0}(1)),
$$
with always $o_{h \rightarrow 0}(1)$ uniform in $m$ on all compacts of $\RR$. The result follows.
\end{proof}
\subsection{Weak coercivity}
In this part we prove Proposition \ref{propo.CVU.coercivite} (\ref{pt2.propCVU_coercivite}).
\begin{proposition} \label{propo_coercivite_faible}
Let $K>0$. There exist $h_0>0$ and $g : \RR \rightarrow \RR^+ $ coercive such that for all $h\in ]0, h_0] $ and $m \in \RR$ satisfying
\[
f_{1,h}(m)\leq K\,,
\]
we have
$f_{1,h} (m) \geq g (m)$.
\end{proposition}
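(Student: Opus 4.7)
The plan is to mimic the proof of Proposition \ref{proposition.cvu}, replacing the compactness assumption on $m$ by the a priori bound $f_{1,h}(m)\leq K$. This bound gives $\lambda_{1,m}(h)\leq K\sqrt h\,e^{2\phi_{\min}/h}$ \emph{uniformly} in $m$, which is super-exponentially small. As a result, the error factors of the form $(1+f_{1,h}(m))\sqrt{\lambda_{1,m}(h)}$ and $f_{1,h}(m)\,\cO(h^\infty)$ appearing in the localization (Proposition \ref{propo:localisation_fct_propre}), the monomial approximation (Proposition \ref{propo:approx_holo}), and the denominator control (Lemmas \ref{lemme.premiere_maj_norme}--\ref{lemme.seconde_maj_norme}) all become $\cO(h^\infty)$ uniformly in $m$; the injectivity argument of Lemma \ref{lemme:injectivite_proj_1} extends verbatim under the same hypothesis.

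With these uniform tools at hand, I would repeat the derivation of Section \ref{sous_section:reduction_rayleigh}. Fix a small $\delta>0$ depending only on the geometry, let $v_{h,m}$ be a ground state of $\cL^-_{h,m}$, and write $\widetilde\Pi_{h,m}v_{h,m}=\alpha_m\,r^{m+1/2}$ with $\alpha_m\neq 0$. Lemma \ref{lemme_reduction_num} bounds the numerator of the Rayleigh quotient by boundary contributions at $\rho_i\pm\delta$; Proposition \ref{propo:approx_holo} replaces those boundary values by $\alpha_m(\rho_i\pm\delta)^{m+1/2}$ up to exponentially small errors (uniform in $m$); and Lemma \ref{lemme.seconde_maj_norme} (via Lemma \ref{lemme.premiere_maj_norme}) bounds
\[
\|e^{-\phi/h}v_{h,m}\|^2\leq |\alpha_m|^2\sqrt{\pi/\phi''_{\min}}\,(r_{\min}+\mathrm{sgn}(2m+1)h^\beta)^{2m+1}\sqrt h\,e^{-2\phi_{\min}/h}(1+o_h(1)).
\]
The $|\alpha_m|^2$ factors cancel between numerator and denominator, and after dividing by $\sqrt h\,e^{2\phi_{\min}/h}$ I obtain $f_{1,h}(m)\geq g_h(m)(1-o_h(1))$, where
\[
g_h(m)\;:=\;\sqrt{\frac{\phi''_{\min}}{\pi}}\;\frac{E_m^{int}(h,\delta)(\rho_1+\delta)^{2m+1}+E_m^{ext}(h,\delta)(\rho_2-\delta)^{2m+1}}{(r_{\min}+\mathrm{sgn}(2m+1)h^\beta)^{2m+1}},
\]
and $o_h(1)$ is uniform in $m$.

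It then remains to check that $g_h$ is uniformly coercive on $(0,h_0]$ for $h_0$ small. Using the explicit formulas of Lemma \ref{lemme_reduction_num}, $E_m^{ext}(h,\delta)$ is bounded below by a positive constant for $m\geq 0$, and for $\delta,h_0$ small one has $(\rho_2-\delta)/(r_{\min}+h^\beta)>1$, so the exterior term grows like $((\rho_2-\delta)/r_{\min})^{2m+1}\to+\infty$ as $m\to+\infty$, uniformly in $h\leq h_0$. The interior term handles $m\to-\infty$ symmetrically. Setting $g(m):=\tfrac12\inf_{h\in(0,h_0]}g_h(m)$ therefore yields a coercive function satisfying the claim.

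The main obstacle will be the validity of the monomial approximation (Proposition \ref{propo:approx_holo}) when $|m|$ is large: the ratio between the error $|(\Id-\widetilde\Pi_{h,m})v_{h,m}(\rho_i\pm\delta)|$ and the dominant term $|\alpha_m|(\rho_i\pm\delta)^{m+1/2}$ is only small for $|m|\lesssim 1/h$, because the exponentially small factor $e^{\phi(\rho_i\pm\delta)/h}$ must beat the geometric factor $(r_{\min}/(\rho_i\pm\delta))^{m+1/2}$. This regime is however forced by the hypothesis itself: expanding $\|d^\times_{h,\widetilde m}v\|^2$ via Lemma \ref{lemme.calcul_norme2_d_h,m} gives $\lambda_{1,m}(h)\geq c(h|\widetilde m|/\rho_2)^2$ for $h|\widetilde m|$ large enough to absorb the $-h\|\sqrt B v\|^2$ term, so that $\lambda_{1,m}(h)\leq K\sqrt h\,e^{2\phi_{\min}/h}$ forces $|\widetilde m|\leq C_K/h$, hence $|m|\leq C'_K/h$; the approximation argument then applies throughout this range.
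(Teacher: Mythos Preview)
Your plan is correct and closely parallels the paper's proof: the hypothesis $f_{1,h}(m)\leq K$ replaces compactness and makes the remainders in Propositions \ref{propo:localisation_fct_propre}--\ref{propo:approx_holo} and Lemmas \ref{lemme.premiere_maj_norme}--\ref{lemme.seconde_maj_norme} uniform in $m$; your extension of Lemma \ref{lemme:injectivite_proj_1} is exactly the paper's Lemma \ref{lemme:injectivite_proj_2}, and Lemmas \ref{lemme.coercivite_majoration_L2norme}--\ref{lemme2_coercivite_minoration_energie} carry out the numerator/denominator comparison you describe.

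There is one genuine difference, and it is precisely the source of the ``obstacle'' in your last paragraph. You keep \emph{both} boundary contributions in $g_h(m)$, whereas the paper keeps only the favourable one (Lemma \ref{lemme1_coercivite_minoration_energie}): for $m>0$ it drops the interior term and retains only $E_m^{ext}(h,\delta)\,|v_{h,m}(\rho_2-\delta)|^2$, and symmetrically for $m<0$. This matters because the monomial approximation at the \emph{unfavourable} boundary need not hold uniformly in $m$: after combining Proposition \ref{propo:approx_holo} with Lemma \ref{lemme.seconde_maj_norme}, the error at $\rho_1+\delta$ is of order $e^{-c\delta/h}\,|\alpha_m|\,(r_{\min}+h^\beta)^{m+1/2}$, which for $m>0$ large may swamp $|\alpha_m|\,(\rho_1+\delta)^{m+1/2}$. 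Your proposed remedy $|m|\leq C_K/h$ does not close this gap: Lemma \ref{lemme_reduction_num} forces $\delta^2/h\to 0$ (so $\delta$ cannot be fixed as you suggest), and with $\delta=h^\alpha$, $\alpha\in(1/2,1)$, the factor $e^{-c\,h^{\alpha-1}}$ cannot absorb a geometric growth of order $(r_{\min}/(\rho_1+\delta))^{C/h}\sim e^{c'/h}$.

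At the favourable boundary the issue disappears: for $m>0$ one has $\rho_2-\delta>r_{\min}+h^\beta$, hence $\big((r_{\min}+h^\beta)/(\rho_2-\delta)\big)^{m+1/2}\leq 1$ and the error is $o(1)$ times the monomial \emph{uniformly} in $m$; the case $m<0$ is symmetric. The resulting one-sided lower bound is still coercive, and this is how the paper concludes. The simplest repair of your argument is therefore to discard the unfavourable boundary term from $g_h(m)$ rather than trying to control it; your last paragraph then becomes unnecessary.
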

Similarly to Lemma \ref{lemme:injectivite_proj_1}, we have
\begin{lemma} \label{lemme:injectivite_proj_2}
Let $K>0$ and $h_0>0$. For all $h\in ]0, h_0] $ and $m \in \RR$ satisfying
\[
f_{1,h}(m)\leq K\,,
\]
the projection $\widetilde{\Pi}_{h,m}$, definied in Definition \ref{def_projection_holo}, is injective when it acts on the eigenspace associated to $\lambda_{1,m} (h)$.
\end{lemma}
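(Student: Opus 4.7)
The plan is to mirror the argument of Lemma \ref{lemme:injectivite_proj_1}, replacing the compactness hypothesis on $m$ by the uniform bound $f_{1,h}(m)\leq K$, which plays the same role: it lets the prefactor in the monomial approximation of Proposition \ref{propo:approx_holo} be controlled independently of $m$, and hence be absorbed by the exponentially small factor $e^{2\phi_{\min}/h}$.

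Concretely, let $h\in]0,h_0]$, $m\in\RR$ with $f_{1,h}(m)\leq K$, and let $v_{h,m}$ be an eigenfunction associated to $\lambda_{1,m}(h)$ such that $\widetilde{\Pi}_{h,m}v_{h,m}=0$. First, I would apply Proposition \ref{propo:approx_holo} on an interval $\cI_\delta$ with $\delta\in]0,\delta_0]$ fixed and sufficiently small: since $(\Id-\widetilde{\Pi}_{h,m})v_{h,m}=v_{h,m}$ by hypothesis, this gives
\[
\left\|e^{-\phi/h}v_{h,m}\right\|_{L^2(\cI_\delta)}
\leq C h^{-1/2}\lambda_{1,m}(h)\bigl(1+f_{1,h}(m)\bigr)\left\|e^{-\phi/h}v_{h,m}\right\|_{L^2(\cI_\delta)}.
\]
Substituting $\lambda_{1,m}(h)=f_{1,h}(m)\sqrt{h}\,e^{2\phi_{\min}/h}$ and using $f_{1,h}(m)\leq K$ transforms the right-hand side prefactor into $C\,K(1+K)\,e^{2\phi_{\min}/h}$.

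Since $\phi_{\min}<0$, for $h_0$ small enough (depending only on $K$) this prefactor is strictly less than $1$, which forces $v_{h,m}\equiv 0$ on $\cI_\delta$. Finally, the eigenfunction $v_{h,m}$ solves a second-order linear ODE on $[\rho_1,\rho_2]$ with smooth coefficients (the radial realization of $\cL^-_{h,m}$), so by the uniqueness of the Cauchy problem, vanishing on the open interval $\cI_\delta$ implies $v_{h,m}\equiv 0$ on $[\rho_1,\rho_2]$. This proves the injectivity of $\widetilde{\Pi}_{h,m}$ on the eigenspace of $\lambda_{1,m}(h)$.

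The only delicate point is ensuring that the constant $C$ in Proposition \ref{propo:approx_holo} is genuinely independent of $m$; this is the content of that proposition, so there is no real obstacle here. The key mechanism is simply that the semiclassical weight $e^{2\phi_{\min}/h}$ dominates any polynomial-in-$K$ contribution coming from the bound on $f_{1,h}(m)$.
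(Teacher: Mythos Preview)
Your proof is correct and follows essentially the same approach as the paper's: both apply Proposition \ref{propo:approx_holo} to the assumption $\widetilde{\Pi}_{h,m}v_{h,m}=0$, use the bound $f_{1,h}(m)\leq K$ to control the prefactor by $C(K)e^{2\phi_{\min}/h}$, and conclude that $v_{h,m}$ vanishes on $\cI_\delta$. You are in fact more careful than the paper on the last step, explicitly invoking the Cauchy uniqueness for the underlying second-order ODE to pass from vanishing on $\cI_\delta$ to vanishing on all of $[\rho_1,\rho_2]$, which the paper leaves implicit.
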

\begin{proof}
Let $K>0$ and $h_0>0$. For all $h\in ]0, h_0] $ and $m \in \RR$ satisfying
\[
f_{1,h}(m)\leq K\,,
\]
consider $v_{h,m}$ an eigenfunction associated to $\lambda_{1,m} (h)$ satisfying $\widetilde{\Pi}_{h,m} v_{h,m} = 0$.\\
According to Proposition \ref{propo:approx_holo}, there exist $C, h_0, \delta_0 >0$ such that for all $\delta \in ]0,\delta_0]$ and $h \in ]0 , h_0]$, we have
\begin{align*}
\left\|e^{-\phi/h} v_{h,m}\right\|_{L^{2} (\cI_\delta )} & \leq  \left\|e^{-\phi/h} \autopar{\Id - \widetilde{\Pi}_{h,m}} v_{h,m}\right\|_{L^{2} (\cI_\delta )} + \left\|e^{-\phi/h} \widetilde{\Pi}_{h,m} v_{h,m}\right\|_{L^{2} (\cI_\delta )} \\
& \leq C e^{\phi_{\min}/h} \left( f_{1,h} (m) + \left( f_{1,h} (m) \right)^2 \right) \left\|e^{-\phi/h} v_{h,m}\right\|_{L^{2} (\cI_\delta )} ,
\end{align*}
with $\widetilde{\Pi}_{h,m}$ given in Definition \ref{def_projection_holo} and $\cI_\delta = [\rho_1 + \delta , \rho_2 - \delta]$. \\
Then, there exists another constant $C\left( K \right)>0$ such that
$$
\left\|e^{-\phi/h} v_{h,m}\right\|_{L^{2} (\cI_\delta )} \autopar{ 1 - C e^{\phi_{\min}/h}} \leq 0 .
$$
Then $v_{h,m} = 0 $ on $\cI_{\delta}$ that is provides the injectivity of $\widetilde{\Pi}_{h,m}$.
\end{proof}

\begin{remark}
Taking $\beta = 2/5$ in Lemma \ref{lemme.seconde_maj_norme}.
\end{remark}
After that, the $L^2$-norm can be estimated as follows.
\begin{lemma} \label{lemme.coercivite_majoration_L2norme}
Let $K>0$. There exists $h_0>0$ such that for all $h\in ]0, h_0] $ and $m \in \RR$ satisfying
\[
f_{1,h}(m)\leq K\,,
\]
we have for all $v_{h,m}$ eigenfunction associated to $\lambda_{1,m} (h)$
\begin{equation*}
\left\| e^{-\phi/h} v_{h,m} \right\|^2 \leq \valabs{\alpha_m}^2 \sqrt{\frac{\pi}{\phi''_{\min}}} \, \autopar{r_{\min} + \mathrm{sgn}(2m+1) h^{2/5} }^{2m+1} \sqrt{h} \; e^{-2 \phi_{\min} / h} \autopar{1 + o \autopar{1}}   \, ,
\end{equation*}
with $\alpha_m \in \CC^*$ such that $ \widetilde{\Pi}_{h,m}  v_{h,m} = \alpha_m r^{m+1/2}$. \\
Moreover, $o_{h \rightarrow 0}(1)$ is uniform in $m$.
\end{lemma}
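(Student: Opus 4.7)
The strategy is the exact analogue of the proof of Lemma \ref{lemme.cvu_majoration_L2norme}: I would chain Lemmas \ref{lemme.premiere_maj_norme} and \ref{lemme.seconde_maj_norme}, but replacing the compactness of $m$ by the uniform bound $f_{1,h}(m) \leq K$. Following the preceding remark, I would fix $\beta = 2/5$, which lies in $]1/3, 1/2[$ and is thus admissible in both lemmas.

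First, I would apply Lemma \ref{lemme.premiere_maj_norme} with this $\beta$: for $h$ small enough and any eigenfunction $v_{h,m}$ associated to $\lambda_{1,m}(h)$,
\[
\left\| e^{-\phi/h} v_{h,m} \right\|^2 \leq R_m(h)^2 \left\| e^{-\phi/h} \widetilde{\Pi}_{h,m} v_{h,m} \right\|_{L^2(\cI_{\min,h})}^2,
\]
where
\[
R_m(h) = \Bigl( \bigl(1 + (f_{1,h}(m)+f_{1,h}(m)^2)\cO(h^\infty)\bigr)\sqrt{1+f_{1,h}(m)\cO(h^\infty)} \Bigr)^{-1},
\]
and the $\cO(h^\infty)$ terms are independent of $m$. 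The key observation is that, under the constraint $f_{1,h}(m) \leq K$, the products $f_{1,h}(m)\cO(h^\infty)$ and $f_{1,h}(m)^2\cO(h^\infty)$ are themselves $\cO(h^\infty)$ uniformly in $m$, so $R_m(h) = 1 + o(1)$ uniformly in $m$ satisfying the constraint. This is the only substantive difference with the compact case of Lemma \ref{lemme.cvu_majoration_L2norme}, and it is essentially automatic.

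Second, I would apply Lemma \ref{lemme.seconde_maj_norme} with $\beta = 2/5$ to bound the projected norm:
\[
\left\| e^{-\phi/h} \widetilde{\Pi}_{h,m} v_{h,m} \right\|_{L^2(\cI_{\min,h})}^2 \leq |\alpha_m|^2 \sqrt{\frac{\pi}{\phi''_{\min}}}\, \bigl(r_{\min} + \mathrm{sgn}(2m+1)h^{2/5}\bigr)^{2m+1} \sqrt{h}\, e^{-2\phi_{\min}/h} (1+o(1)),
\]
with $o(1)$ independent of $m$ (this uniformity is given directly by the statement of Lemma \ref{lemme.seconde_maj_norme}, whose error term is $m$-free). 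Combining the two estimates yields the claimed bound with an $o(1)$ uniform in all $m$ satisfying $f_{1,h}(m) \leq K$.

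The proof involves no real obstacle: every ingredient has already been established in Section \ref{section.preuve.propo}, and the only delicate point is tracking that the remainder terms in Lemma \ref{lemme.premiere_maj_norme} remain negligible when multiplied by $f_{1,h}(m)$ or $f_{1,h}(m)^2$, which holds precisely because $f_{1,h}(m)$ is assumed bounded by $K$ and the $\cO(h^\infty)$ are $m$-independent.
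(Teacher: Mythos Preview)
Your proposal is correct and follows exactly the paper's approach: the paper's proof consists of the single sentence that one applies Lemmas \ref{lemme.premiere_maj_norme} and \ref{lemme.seconde_maj_norme} and observes that under the hypothesis $f_{1,h}(m)\leq K$ the factor $R_m(h)$ equals $1+o(1)$ uniformly in $m$. Your elaboration of why this uniformity holds (the $\cO(h^\infty)$ terms in $R_m(h)$ are $m$-independent and get multiplied by quantities bounded by $K$ and $K^2$) is precisely the implicit reasoning behind that sentence.
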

\begin{proof}
It is enough to apply Lemma \ref{lemme.premiere_maj_norme} \& \ref{lemme.seconde_maj_norme} by noting that under these assumptions
$$
R_m (h) = 1+ o_{h \rightarrow 0} (1),
$$
with $o_{h \rightarrow 0}(1)$ uniform in $m$.
\end{proof}
\begin{lemma} \label{lemme1_coercivite_minoration_energie}
There exist $h_0 , \delta_0 >0$ such that for all $h \in ]0,h_0]$, for all $\delta \in ]0,\delta_0 ]$ and for all $v \in H_0^1$
\begin{equation*}
\left\| e^{-\phi / h} h \autopar{\partial_r + \frac{m + 1/2}{r} } v \right\|^2  \geq h \;  \cN_{\partial_{n} \phi} \autopar{V^0} \autopar{1+ o(1)} \; ,
\end{equation*}
when $\delta / h \rightarrow + \infty$ and $\delta^2 / h \rightarrow 0$ with $\cN_{\partial_{n} \phi} ( \cdot )$ defined in Notation \ref{norme_l2_poids} and 
$$
V^0 = \autopar{ v(\rho_1 + \delta) \mathds{1}_{]-\infty,0[} (m) , v( \rho_2 - \delta ) \mathds{1}_{]0, +\infty[} (m)  }.
$$
Moreover, $o_{h \rightarrow 0}(1)$ is uniform in $m$.
\end{lemma}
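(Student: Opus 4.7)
The starting point is Lemma \ref{lemme_reduction_num}, which already provides
\[
\norme{e^{-\phi/h} h \autopar{\partial_r + \tfrac{m+1/2}{r}} v}^2 \geq h \bigl(E_m^{int}(h,\delta)\valabs{v(\rho_1+\delta)}^2 + E_m^{ext}(h,\delta)\valabs{v(\rho_2-\delta)}^2\bigr)(1+o(1)),
\]
with the $o(1)$ already uniform in $m$. My strategy is to perform a case split on the sign of $m$ and, in each case, retain only the coefficient that can be controlled uniformly. Since $E_m^{int}$ and $E_m^{ext}$ are non-negative, I may freely discard either term. For $m \geq 0$ I keep the exterior term and drop the interior one; for $m \leq 0$ I do the opposite. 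This matches the very structure of $V^0$.

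The key observation is that in each favorable regime the $m$-dependence disappears entirely from the relevant coefficient. Indeed, when $m \geq 0$ the indicator $\mathds{1}_{]-\infty,0[}(m)$ vanishes, so
\[
E_m^{ext}(h,\delta) = e^{\delta_2}\,\frac{2\partial_n\phi(\rho_2)}{1 - e^{-2\delta_2 \rho_2 \partial_n\phi(\rho_2)/h}},
\]
which does not depend on $m$ at all. Under the scaling $\delta \to 0$ and $\delta/h \to +\infty$ this quantity converges to $2\partial_n\phi(\rho_2)$, yielding $E_m^{ext}(h,\delta)\valabs{v(\rho_2-\delta)}^2 \geq 2\partial_n\phi(\rho_2)\valabs{v(\rho_2-\delta)}^2(1+o(1))$. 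The symmetric computation for $m \leq 0$ gives $E_m^{int}(h,\delta) \to 2\partial_n\phi(\rho_1)$. Recombining the two cases and invoking the definition of $V^0$ produces the desired lower bound; the degenerate case $m = 0$ is trivial since $V^0 = (0,0)$.

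The only point requiring care is the uniformity in $m$ of the errors. This is essentially automatic: Lemma \ref{lemme_reduction_num} already provides a uniform $o(1)$, and in each favorable regime the residual $(\delta,h)$-dependent factors carry no $m$-dependence whatsoever, so their convergence as $\delta \to 0$ and $\delta/h \to +\infty$ is trivially uniform. I do not foresee a genuine obstacle; the statement is a direct corollary of Lemma \ref{lemme_reduction_num} once one exploits the asymmetry built into the indicator functions appearing in $E_m^{int}$ and $E_m^{ext}$.
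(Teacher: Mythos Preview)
Your proposal is correct and follows essentially the same route as the paper: start from Lemma~\ref{lemme_reduction_num}, discard the unfavorable boundary term (which requires knowing $E_m^{int},E_m^{ext}\geq 0$), and observe that in the retained term the indicator kills all $m$-dependence so the asymptotics are automatically uniform. The only cosmetic difference is that the paper invokes the mean value theorem to show positivity of $E_m^{int}$ for $m>0$ (resp.\ $E_m^{ext}$ for $m<0$), whereas you assert non-negativity directly; since numerator and denominator in each formula are manifestly positive, your shortcut is justified.
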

\begin{proof}
Let $h , \delta >0$, $m \in \RR$ and $v\in H_0^1$. We can reduce the quantities $E_m^{int} \autopar{h,\delta}$ and $E_m^{ext} \autopar{h,\delta}$ defined in Lemma \ref{lemme_reduction_num} by using the mean value theorem. It is enough to note that for $m\in \RR_+^*$ there exists $\xi_{\mathrm{int}}\in (-2\delta_1 \rho_1 \partial_n \phi (\rho_1)/h,0)$ such that
\begin{equation*}
    E^{int}_m (h,\delta) = e^{- \delta_{1}} \frac{2 \partial_n \phi \autopar{\rho_1} + (2mh/ \rho_1) \mathds{1}_{]0, +\infty[} (m) }{e^{2m\delta_{1} \mathds{1}_{]0, +\infty[} (m)}- e^{-2\delta_{1} \rho_{1} \partial_{n} \phi \autopar{\rho_{1}} /h }} = \frac{h e^{-\delta_{1}}}{\rho_1 \delta_1} e^{-\xi_{\mathrm{int}}} > 0 \, ,
\end{equation*}
and for $m\in \RR_-^*$ there exists $\xi_{\mathrm{ext}}\in (-2\delta_2 \rho_2 \partial_n \phi (\rho_2)/h,0)$ such that
\begin{align*}
E^{ext}_m (h,\delta) &= e^{ \delta_{2}} \frac{2 \partial_n \phi \autopar{\rho_2} - (2mh/ \rho_2) \mathds{1}_{]-\infty,0[} (m) }{e^{-2m\delta_{2} \mathds{1}_{]-\infty,0[} (m)}- e^{-2\delta_{2} \rho_{2} \partial_{n} \phi \autopar{\rho_{2}} /h }} = \frac{h e^{\delta_{2}}}{\rho_2 \delta_2} e^{-\xi_{\mathrm{ext}}} > 0 \, ,
\end{align*}
with $\delta_1 = \mathrm{ln} \autopar{1+\frac{\delta}{\rho_1}}$, $\delta_2 = - \mathrm{ln} \autopar{1 - \frac{\delta}{\rho_2}}$. \\
If we assume that $\delta /h \rightarrow +\infty$,
\begin{align*}
    E^{int}_m (h,\delta) &\geq e^{- \delta_{1}}\frac{2 \partial_n \phi \autopar{\rho_1} }{1 - e^{-2\delta_{1} \partial_{n} \phi \autopar{\rho_{1}} /h }} \mathds{1}_{]-\infty,0[} (m) \\
    & \geq   \partial_n \phi \autopar{\rho_1} \mathds{1}_{]-\infty,0[} (m) (1+o(1)) \; ,
\end{align*}
and
\begin{align*}
    E^{ext}_m (h,\delta)  &\geq e^{\delta_{2}}\frac{2 \partial_n \phi \autopar{\rho_2} }{1- e^{-2\delta_{2} \partial_{n} \phi \autopar{\rho_{2}} / h}} \mathds{1}_{]0, +\infty[} (m)  \\
    & \geq  \partial_n \phi \autopar{\rho_2} \mathds{1}_{]0, +\infty[} (m)  (1+ o(1)) \; .
\end{align*}
Therefore, by assuming $\delta^2 / h \rightarrow 0$, Lemma \ref{lemme_reduction_num} ensures the existence of $h_0 , \delta_0 >0$ such that for all $h \in ]0,h_0]$, for all $\delta \in ]0,\delta_0 ]$ and for all $v \in H_0^1$
\begin{equation*}
\left\| e^{-\phi / h} h \autopar{\partial_r + \frac{m + 1/2}{r} } v \right\|^2  \geq h \; \cN_{\partial_{n} \phi} \autopar{V^0} \autopar{1+ o(1)} \; ,
\end{equation*}
with $\cN_{\partial_{n} \phi} (\cdot)$ defined in Notation \ref{norme_l2_poids} and 
$$
V^0 = \autopar{ v(\rho_1 + \delta) \mathds{1}_{]-\infty,0[} (m) , v( \rho_2 - \delta ) \mathds{1}_{]0, +\infty[} (m)  }.
$$
\end{proof}
\begin{lemma} \label{lemme2_coercivite_minoration_energie}
Let $K>0$. There exists $h_0>0$ such that for all $h\in ]0, h_0] $ and $m \in \RR$ satisfying
\[
f_{1,h}(m)\leq K\,,
\]
we have for all $v_{h,m}$ eigenfunction associated to $\lambda_{1,m} (h)$
\begin{multline*}
\lambda_{1,m} (h) \, \left\| e^{-\phi/h} v_{h,m} \right\|^2 \geq \\
h N \valabs{\alpha_m}^2 \autopar{ \autopar{\rho_1 + \delta }^{2m+1} \mathds{1}_{]- \infty , 0[} (m)  +  \autopar{\rho_2 - \delta }^{2m+1} \mathds{1}_{]0, +\infty[} (m)  } (1 + o(1)),
\end{multline*}
when $\delta^2 /h \rightarrow 0 $ and $\delta/h \rightarrow +\infty$ with $N = \mathrm{min} \autopar{\partial_n \phi \autopar{\rho_1 } , \partial_n \phi \autopar{\rho_2 }  } > 0 $ and $\alpha_m \in \CC^*$ such that $ \widetilde{\Pi}_{h,m}  v_{h,m} = \alpha_m r^{m+1/2}$.\\
Moreover, $o_{h \rightarrow 0}(1)$ is uniform in $m$ and depends on $K$.
\end{lemma}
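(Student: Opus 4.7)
The plan is to mimic the proof of Lemma \ref{lemme2_cvu_min_energie} step by step, replacing each ingredient designed for compact sets of $m$ by its weak-coercivity counterpart tailored to the hypothesis $f_{1,h}(m)\leq K$. The overall scheme has three ingredients: a lower bound for the energy in terms of boundary values via Lemma \ref{lemme1_coercivite_minoration_energie}; a replacement of those boundary values by those of the monomial projection $\widetilde{\Pi}_{h,m}v_{h,m} = \alpha_m r^{m+1/2}$ via Proposition \ref{propo:approx_holo}; and an absorption of the remainder terms using the localization result, Proposition \ref{propo:localisation_fct_propre}, together with the hypothesis $f_{1,h}(m)\leq K$.

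I would start by applying Lemma \ref{lemme1_coercivite_minoration_energie} to the eigenfunction $v_{h,m}$, yielding
\[
\lambda_{1,m}(h)\,\|e^{-\phi/h}v_{h,m}\|^2 \geq h\,\cN_{\partial_n\phi}(V^0_{h,m})^2\,(1+o(1)),
\]
where $V^0_{h,m}=\bigl(v_{h,m}(\rho_1+\delta)\mathds{1}_{]-\infty,0[}(m),\, v_{h,m}(\rho_2-\delta)\mathds{1}_{]0,+\infty[}(m)\bigr)$. The key point here, compared to Lemma \ref{lemme1_cvu_min_energie}, is that only the side of the annulus where $r^{m+1/2}$ is large contributes, which is what allows the estimate to remain meaningful as $|m|\to\infty$. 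I would then split $V^0_{h,m} = \Pi V^0_{h,m} + (\Id-\Pi)V^0_{h,m}$, where $\Pi V^0_{h,m}$ denotes the corresponding boundary values of $\widetilde{\Pi}_{h,m}v_{h,m}$, and apply the triangular inequality for the norm $\cN_{\partial_n\phi}$.

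Next, I would control the remainder $\cN_{\partial_n\phi}((\Id-\Pi)V^0_{h,m})$ using Proposition \ref{propo:approx_holo} applied with $\delta$ satisfying $\delta/h\to+\infty$, $\delta^2/h\to 0$. The weights $e^{-2\phi(\rho_1+\delta)/h}$, $e^{-2\phi(\rho_2-\delta)/h}$ appearing in the definition of $U_{h,m}$ are, by the first-order Taylor expansion of $\phi$ at the boundary together with the Hopf lemma, at most $e^{-2N\delta/h}(1+o(1))$. Combined with the hypothesis $f_{1,h}(m)\leq K$ which yields $(1+f_{1,h}(m))\leq 1+K$ uniformly, this produces an error that is $\mathcal{O}(h^\infty)\sqrt{\lambda_{1,m}(h)}\|e^{-\phi/h}v_{h,m}\|_{L^2(\cI_\delta)}$; by Proposition \ref{propo:localisation_fct_propre}, which holds under $f_{1,h}(m)\leq K$ and hence uniformly in such $m$, the $L^2(\cI_\delta)$ norm can be replaced by the full $L^2$ norm up to a factor $(1+o(1))$ uniform in $m$.

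Finally, the main term is evaluated directly: since $\widetilde{\Pi}_{h,m}v_{h,m}=\alpha_m r^{m+1/2}$, one has
\[
\cN_{\partial_n\phi}(\Pi V^0_{h,m})^2 = |\alpha_m|^2\Bigl(\partial_n\phi(\rho_1)(\rho_1+\delta)^{2m+1}\mathds{1}_{]-\infty,0[}(m) + \partial_n\phi(\rho_2)(\rho_2-\delta)^{2m+1}\mathds{1}_{]0,+\infty[}(m)\Bigr),
\]
which is bounded below by $N|\alpha_m|^2\bigl((\rho_1+\delta)^{2m+1}\mathds{1}_{]-\infty,0[}(m)+(\rho_2-\delta)^{2m+1}\mathds{1}_{]0,+\infty[}(m)\bigr)$ with $N=\min(\partial_n\phi(\rho_1),\partial_n\phi(\rho_2))>0$. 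Assembling these pieces produces the announced inequality. The main obstacle is precisely tracking the uniformity of all $o(1)$ terms across $m\in\mathbb{R}$: the hypothesis $f_{1,h}(m)\leq K$ serves exactly to dominate the prefactors in Propositions \ref{propo:approx_holo} and \ref{propo:localisation_fct_propre} (which are polynomial in $f_{1,h}(m)$), while the indicator structure of $V^0_{h,m}$ is what prevents the right-hand side from degenerating for large $|m|$ — choosing the correct boundary matches the side on which the monomial $r^{m+1/2}$ concentrates.
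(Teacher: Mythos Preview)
Your proposal is correct and follows essentially the same approach as the paper's proof: apply Lemma \ref{lemme1_coercivite_minoration_energie} to the eigenfunction, split the boundary values $V^0_{h,m}$ into the monomial projection part and a remainder, control the remainder via Proposition \ref{propo:approx_holo} together with the Taylor expansion of $\phi$ at the boundary and the hypothesis $f_{1,h}(m)\leq K$, absorb the $L^2(\cI_\delta)$ norm back into the full norm via Proposition \ref{propo:localisation_fct_propre}, and evaluate the main term explicitly. The only cosmetic difference is that the paper first bounds $\cN_{\partial_n\phi}((\Id-\Pi)V_{h,m})$ (without the indicators) and then observes $\cN_{\partial_n\phi}((\Id-\Pi)V_{h,m})\geq \cN_{\partial_n\phi}((\Id-\Pi)V^0_{h,m})$, whereas you apply the triangle inequality directly to $V^0_{h,m}$; the content is the same.
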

\begin{proof} Let $K>0$ and $h_0>0$. Consider $h \in ]0, h_0]$ and $m\in\mathbb{R}$ satisfying 
\[
f_{1,h}(m)\leq K\,.
\]
Let $\delta>0$ and $v_{h,m}$ an eigenfunction associated to $\lambda_{1,m} (h)$. Let us also assume that $\delta / h \rightarrow + \infty$ and $\delta^2 / h \rightarrow 0$. 
\begin{enumerate}[label = \roman*)]
\item When we take $v_{h,m}$ an eigenfunction, Lemma \ref{lemme1_coercivite_minoration_energie} becomes 
\begin{equation*}
\sqrt{\lambda_{1,m} (h)} \, \left\|e^{-\phi/h} v_{h,m}\right\| \geq \sqrt{h} \, \cN_{\partial_{n} \phi} \autopar{V^0_{h,m}} (1 + o(1)) , 
\end{equation*}
with $\cN_{\partial_{n} \phi} ( \cdot )$ the norm defined in Notation \ref{norme_l2_poids} and
$$
V^0_{h,m} = \autopar{ v_{h,m}(\rho_1 + \delta) \mathds{1}_{]-\infty,0[} (m) , v_{h,m}( \rho_2 - \delta ) \mathds{1}_{]0, +\infty[} (m)  }.
$$
\item \label{cvu_point_ii_bis} Proposition \ref{propo:approx_holo} ensures the existence of a constant $C_K >0$ such that 
\begin{equation*}
\cN_{\partial_{n} \phi} \left( U_{h,m} \right)  \leq C_K h^{-3/4} \sqrt{\lambda_{1,m} (h)}  \left\|e^{-\phi/h} v_{h,m}\right\|_{L^{2} \autopar{\cI_{\delta}}} (1 + o(1)) \; ,
\end{equation*}    
with $\cI_\delta = [\rho_1 + \delta , \rho_2 - \delta ]$, $o(1)$ uniform in $m$ and dependent on $K$ and
\begin{equation*}
U_{h,m} = \left( e^{-2\phi(\rho_{1}+ \delta)/h}\left( \Id - \widetilde{\Pi}_{h,\delta}  \right)v_{h,m} \autopar{\rho_1 + \delta}, e^{-2\phi(\rho_{2}- \delta)/h}\left( \Id - \widetilde{\Pi}_{h,\delta}  \right)v_{h,m} \autopar{\rho_2 - \delta} \right) .
\end{equation*}
Using the Taylor expansion of $\phi$ in the neighborhood of $\rho_1$ and $\rho_2$ and by writing $N = \min \autopar{\partial_n \phi (\rho_1) , \partial_n \phi (\rho_2)} >0 $, we get
\begin{equation*}
\cN_{\partial_{n} \phi}  \autopar{\autopar{\Id - \Pi}V_{h,m}}  \leq C_K h^{-3/4} e^{- N \delta /h} \sqrt{\lambda_{1,m} (h)}  \left\|e^{-\phi/h} v_{h,m}\right\|_{L^{2} \autopar{\cI_{\delta}}} (1 + o(1)) \; ,
\end{equation*}
with $\autopar{ \Id -\Pi}  V_{h,m}= \autopar{ \left( \Id - \widetilde{\Pi}_{h,\delta}  \right)v_{h,m}(\rho_1 + \delta) , \left( \Id - \widetilde{\Pi}_{h,\delta}  \right)v_{h,m}( \rho_2 - \delta ) } $. 
\item Then the triangular inequality gives
\begin{equation*}
\cN_{\partial_{n} \phi} \autopar{\autopar{\Id - \Pi}V_{h,m}} \geq \cN_{\partial_{n} \phi} \autopar{\autopar{\Id - \Pi}V_{h,m}^0} \geq \valabs{\cN_{\partial_{n} \phi} \autopar{V_{h,m}^0} - \cN_{\partial_{n} \phi} \autopar{\Pi V_{h,m}^0} } ,
\end{equation*}
with $V^0_{h,m} = \autopar{ \widetilde{\Pi}_{h,\delta} v_{h,m}(\rho_1 + \delta) \mathds{1}_{]-\infty, 0[} (m) , \widetilde{\Pi}_{h,\delta} v_{h,m}( \rho_2 - \delta ) \mathds{1}_{]0, + \infty[} (m) } $. \\
Thus, by \ref{cvu_point_ii}
\begin{equation*}
\cN_{\partial_{n} \phi} \autopar{V_{h,m}^0} \geq \cN_{\partial_{n} \phi} \autopar{\Pi V_{h,m}^0} - \cO \autopar{h^\infty} \sqrt{\lambda_{1,m} (h)} \left\|e^{-\phi/h} v_{h,m}\right\|_{L^{2} \autopar{\cI_{\delta}}} .
\end{equation*}
The uniform bound assumption on $f_{1,h} (\cdot) $ and Proposition \ref{propo:localisation_fct_propre} also ensure the localization of eigenfunction, in particular
\begin{equation*}
\left\|e^{-\phi / h } v_{h,m}\right\|_{L^{2} \autopar{\cI_{\delta}}} = \left\|e^{-\phi / h } v_{h,m}\right\| (1+o(1))  \, ,
\end{equation*}
with $o(1)$ uniform in $m$ and dependent on $K$.\\
Then,
\begin{multline*}
\lambda_{1,m} (h) \, \left\| e^{-\phi/h} v_{h,m} \right\|^2 \geq \\
h N \valabs{\alpha_m}^2 \autopar{ \autopar{\rho_1 + \delta }^{2m+1} \mathds{1}_{]- \infty , 0[} (m)  +  \autopar{\rho_2 - \delta }^{2m+1} \mathds{1}_{]0, +\infty[} (m)  } (1 + o(1)),
\end{multline*}
with $\alpha_m \in \CC^*$ such that $ \widetilde{\Pi}_{h,m}  v_{h,m} = \alpha_m r^{m+1/2}$ and $o_{h \rightarrow 0}(1)$ uniform in $m$ and dependent on $K$.
\end{enumerate}
\end{proof}
\begin{proof}[Proof of Proposition \ref{propo_coercivite_faible}] ~~ \\
Let $K>0$ and $h_0>0$. Let us consider $h \in ]0, h_0]$ and $m\in\mathbb{R}$ satisfying 
\[
f_{1,h}(m)\leq K\,.
\]
Let us take $v_{h,m}$ an eigenfunction associated to $\lambda_{1,m} (h) $. \\
By combining Lemmas \ref{lemme.coercivite_majoration_L2norme} and \ref{lemme2_coercivite_minoration_energie} we have the existence of $\delta_0 >0$ such that for all $\delta \in ]0, \delta_0]$ satisfying $\delta /h \rightarrow + \infty$, $\delta^2 / h \rightarrow 0 $,
\begin{multline*}
f_{1,h} (m) \geq N \left( \autopar{\frac{\rho_1 + \delta}{r_{\min} + sgn (2m+1) h^{2/5}}}^{2m+1} \mathds{1}_{m\leq 0} \right.\\
\left. + \autopar{\frac{\rho_2 - \delta}{r_{\min} + sgn (2m+1) h^{2/5}}}^{2m +1}  \mathds{1}_{m\geq 0} \right) (1 + o(1)) .
\end{multline*}
with $o_{h \rightarrow 0}(1)$ uniform in $m$ and dependent on $K$. \\
But $\rho_1 \leq r_{\min} \leq \rho_2$, then for $h,\delta$ sufficiently small we have 
\begin{equation*}
f_{1,h} (m) \geq N \; 2^{2 \valabs{m} -1 } (1+ o(1)). 
\end{equation*}
The function $g(m) = N \; 4^{\valabs{m} -1}$ satisfies the statement conditions.
\end{proof}
\newpage
\appendix
\section{}
 \label{annexe1}
We focus here on the proof of Lemmas \ref{lemme.fct.nulle} and \ref{lemme.theta} in Section \ref{section.invariants}.\\
The first one is a consequence of the Hodge de-Rham theory.
\begin{lemma} 
Let $F \in \cC^\infty \left( \Omega, \RR^2 \right)$ a vector potential satisfying \eqref{eq.potentiel.rot.div.norm} with $B=0$ and
$$
\int_{\partial \Omega_{int}} F = 0,
$$
where $\partial \Omega_{int}$ is defined in \eqref{eq.Omega}.\\
Then, $F=0$.
\end{lemma}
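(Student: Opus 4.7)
The plan is to use the Hodge--de Rham picture adapted to the planar annulus: the first de Rham cohomology $H^1_{\mathrm{dR}}(\Omega)$ is one-dimensional, generated by the class of a closed, non-exact $1$-form whose circulation around $\partial\Omega_{\mathrm{int}}$ is nonzero. In the vector-field language this says that a curl-free field $F$ on $\Omega$ is a gradient if and only if its circulation on $\partial\Omega_{\mathrm{int}}$ vanishes.

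First, I would associate to $F = (F_1, F_2)$ the $1$-form $\omega = F_1\,dx + F_2\,dy$, which is closed because $\mathrm{rot}(F) = \partial_x F_2 - \partial_y F_1 = 0$. Since $\Omega$ is homotopy equivalent to a circle, $H^1_{\mathrm{dR}}(\Omega) \cong \mathbb{R}$, and the pairing with the $1$-cycle $\partial\Omega_{\mathrm{int}}$ detects exactness. The hypothesis $\int_{\partial\Omega_{\mathrm{int}}} F = 0$ therefore means $[\omega] = 0$, so there exists $f \in \mathcal{C}^\infty(\overline{\Omega}, \RR)$ with $F = \nabla f$. Concretely, one can define $f(x) = \int_{x_0}^{x} F\cdot d\ell$ along any path in $\Omega$, well-defined by the vanishing circulation and simple-connectedness of $\Omega$ cut along a radial segment, then show the result is independent of the cut.

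Next, I would feed the remaining two conditions into the equation for $f$: from $\mathrm{div}(F)=0$ one gets $\Delta f = 0$ on $\Omega$, and from $F\cdot \mathbf{n}=0$ on $\partial\Omega$ one gets the homogeneous Neumann condition $\partial_{\mathbf{n}} f = 0$ on the entire boundary. Classical uniqueness for the Neumann problem on a bounded smooth domain (the kernel consists of constants) forces $f$ to be constant on each connected component; since $\Omega$ is connected, $f$ is constant on $\Omega$, and hence $F = \nabla f = 0$.

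The only mildly delicate point is the exactness step, i.e.\ the identification of $H^1_{\mathrm{dR}}(\Omega)$ with $\RR$ via integration over $\partial\Omega_{\mathrm{int}}$; this is the genuine topological input and the place where the non-simple-connectedness is used. The rest is a standard Hodge decomposition / Neumann uniqueness argument, and no integration by parts past the boundary is needed since the boundary integrals all vanish by $F\cdot \mathbf{n}=0$.
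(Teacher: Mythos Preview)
Your proof is correct and follows essentially the same route as the paper: both associate to $F$ the closed $1$-form $\omega = F_1\,dx + F_2\,dy$, use the de Rham structure of the annulus together with the vanishing circulation to conclude exactness $F = \nabla f$, and then use $\mathrm{div}(F)=0$ and $F\cdot\mathbf{n}=0$ to kill $F$. The only cosmetic difference is the last step: you invoke Neumann uniqueness to get $f$ constant, while the paper writes the underlying energy identity directly, $\|F\|^2 = \int_\Omega F\cdot\nabla f = \int_{\partial\Omega} f\,(F\cdot\mathbf{n})\,d\sigma - \int_\Omega \mathrm{div}(F)\,f = 0$, which is of course the same computation.
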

\begin{proof} ~~
\begin{enumerate}
\item Let us show that there exists $\mathbf{G} \in \cC^{\infty} \left( \Omega , \RR \right) $ such that $ F= (F_1 , F_2) = \nabla \mathbf{G} $.\\
Consider the differential form $\omega = F_1 \diff x + F_2 \diff y $. \\
It is closed and has zero integral on $\partial \Omega_{int}$. Indeed,
\begin{equation*}
\diff \omega = \mathrm{rot} (F) \, \diff x \wedge \diff y = 0 ,
\end{equation*}
and 
\begin{equation*}
\int_{\partial A_{int}} \omega = \int_{0}^{2 \pi} F (\gamma(t)) \cdot \gamma'(t) \diff t =  \int_{\partial \Omega_{int}} F = 0,
\end{equation*}
with for all $t \in [0 , 2 \pi [$, $\gamma (t) = ( \rho_1 \cos ( t) , \rho_1 \sin ( t) )$. \\
Thus, according to \cite[Corollaire 9.19 p.130]{fulton2013algebraic}, $\omega$ is exact.
\item Then, since $\mathrm{div} \left( A \right) =0$ and $ A \cdot \mathbf{n} = 0$, we have
\begin{equation*}
\left\| F \right\|^2 = \int_{\Omega} F \cdot \nabla \mathbf{G} = \int_{\partial \Omega} \mathbf{G} \left( A \cdot \mathbf{n} \right) \diff \sigma - \int_{\Omega} \mathrm{div} \left( A \right) \mathbf{G} = 0,
\end{equation*}
with $ \mathbf{n}$ the unit normal to $\partial \Omega$ and $\diff \sigma$ the surfacic measure associated to $\partial \Omega$.
\end{enumerate}
\end{proof}
\begin{lemma} 
Let $\theta$ the unique solution of
\begin{equation*}
      \left\{
      \begin{array}{lllll}
        \Delta \theta = 0 & \text{on} \; \Omega & & & \\
        \theta = 1  & \text{in} \; \partial \Omega_{int} & \text{and} &  \theta  = 0 & \text{in} \; \partial \Omega_{ext}.
     \end{array}
    \right.
\end{equation*}
Then, $\nabla^\perp \theta$ verifies \eqref{eq.potentiel.rot.div.norm} with $B=0$ and we have in polar coordinates, for all $(r,s) \in [\rho_1 , \rho_2 ] \times [0, 2\pi [$,
$$
\nabla^{\perp} \theta (r,s) = \frac{1}{r\; \mathrm{ln}(\rho_1 / \rho_2 )} \begin{pmatrix} -\sin(s) \\ \cos(s) \end{pmatrix} \; .
$$

Moreover
$$
\int_{\partial \Omega_{int}} \nabla^\perp \theta = \frac{2 \pi}{\ln{\left( \rho_1 / \rho_2 \right)}},
$$
with $\partial \Omega_{int}$ defined in \eqref{eq.Omega}.
\end{lemma}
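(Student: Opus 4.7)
The strategy is elementary: exploit the rotational symmetry of the problem to reduce the Laplace equation to an ODE, solve it explicitly, and then verify the three claims by direct computation.

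First, since the Dirichlet data and the operator $\Delta$ are invariant under rotation, the unique solution $\theta$ must be radial by the uniqueness part of \cite[Theorem 6, p.~326]{evans2010partial} (applied to $\theta(r,s) - \theta(r,s+s_0)$ for any fixed $s_0$). Writing $\theta(r,s) = u(r)$ and using the polar form of the Laplacian, the PDE becomes $u''(r) + r^{-1} u'(r) = 0$ on $]\rho_1, \rho_2[$, i.e. $(r\, u'(r))' = 0$. Hence $u(r) = C_1 \ln(r) + C_2$ for constants determined by $u(\rho_1)=1$ and $u(\rho_2)=0$: a short computation gives
\[
\theta(r) = \frac{\ln(r/\rho_2)}{\ln(\rho_1/\rho_2)} \,, \qquad \partial_r\theta(r) = \frac{1}{r\,\ln(\rho_1/\rho_2)} \,.
\]

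Next, since $\theta$ depends only on $r$, one has $\nabla\theta = \partial_r\theta\, e_r$ and therefore $\nabla^\perp \theta = \partial_r \theta \, e_s$, which in Cartesian coordinates is exactly the stated formula. The verification of \eqref{eq.potentiel.rot.div.norm} with $B=0$ is then automatic: $\mathrm{rot}(\nabla^\perp\theta) = \Delta\theta = 0$, $\mathrm{div}(\nabla^\perp\theta) = 0$ identically, and on each boundary circle the outward normal is $\pm e_r$, so $\nabla^\perp\theta\cdot \mathbf{n} = 0$ since $\nabla^\perp\theta$ is proportional to $e_s$.

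Finally, parametrizing $\partial\Omega_{int}$ by $\gamma(t) = (\rho_1\cos t, \rho_1\sin t)$ for $t \in [0,2\pi[$, one computes
\[
\int_{\partial\Omega_{int}} \nabla^\perp \theta = \int_0^{2\pi} \nabla^\perp\theta(\gamma(t))\cdot \gamma'(t)\,\diff t = \int_0^{2\pi} \frac{\rho_1(\sin^2 t + \cos^2 t)}{\rho_1 \ln(\rho_1/\rho_2)}\,\diff t = \frac{2\pi}{\ln(\rho_1/\rho_2)} \,.
\]
There is no real obstacle in this proof: the only care needed is in the sign/orientation conventions for $\nabla^\perp$ and for the parametrization of $\partial\Omega_{int}$, so that the result has the correct sign (negative, since $\ln(\rho_1/\rho_2) < 0$).
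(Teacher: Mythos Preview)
Your proof is correct and follows essentially the same route as the paper: solve the radial Laplace equation explicitly, compute $\nabla^\perp\theta$ in polar coordinates, and evaluate the circulation via the parametrization $\gamma(t)=(\rho_1\cos t,\rho_1\sin t)$. If anything, you are slightly more complete than the paper's own argument, since you justify radiality of $\theta$ via uniqueness and explicitly check all three conditions in \eqref{eq.potentiel.rot.div.norm}, whereas the paper simply writes down the formula for $\theta$ and leaves the verification of \eqref{eq.potentiel.rot.div.norm} implicit.
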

\begin{proof} The existence and uniqueness of a such function follows from the theory of elliptic partial differential equations. \\
In polar coordinates, we have for all $r \in [\rho_1 , \rho_2]$,
$$
\theta (r) = \frac{\mathrm{ln} \left(r / \rho_2 \right)}{\mathrm{ln} \left( \rho_1 / \rho_2  \right)} .
$$
Furthermore, by using
$$
\nabla^\perp = \begin{pmatrix} -\sin(s) \\ \cos(s) \end{pmatrix} \partial_r - \begin{pmatrix} \cos(s) \\ \sin(s) \end{pmatrix} \frac{\partial_s}{r},
$$
it is easy to check that for any $(r,s) \in [\rho_1 , \rho_2 ] \times [0, 2\pi [$,
$$
\nabla^{\perp} \theta (r,s) = \frac{1}{r\; \mathrm{ln}(\rho_1 / \rho_2 )} \begin{pmatrix} -\sin(s) \\ \cos(s) \end{pmatrix} \; .
$$
Then
\begin{equation*}
\int_{\partial \Omega_{int}} \nabla^{\perp} \theta   =   \int_{0}^{2 \pi} \nabla^{\perp} \theta  (\gamma(t)) \cdot \gamma'(t) \diff t =  \frac{2 \pi}{\ln{\left( \rho_1 / \rho_2 \right)}},
\end{equation*}
with for all $t \in [0 , 2 \pi [$, $\gamma (t) = ( \rho_1 \cos ( t) , \rho_1 \sin ( t) )$. 
\end{proof}

            \section{Polar coordinates of the Pauli operator and fibration} \label{annexe_decompo}
In this appendix, we justify the decomposition of the Pauli operator in polar coordinates, given in Section \ref{subsection.fibration_pauli}. We prove then Lemma \ref{lemme.union.sp}.
 
        \subsection{Decomposition in polar coordinates} ~ \\
Pauli matrices 
$$
\sigma_1 =
\begin{pmatrix}
0 & 1 \\
1 & 0
\end{pmatrix},
\quad
\sigma_2 =
\begin{pmatrix}
0 & -i \\
i & 0
\end{pmatrix},
\quad
\sigma_3 =
\begin{pmatrix}
1 & 0 \\
0 & -1
\end{pmatrix}  \, ,
$$
satisfy for all $a,b \in \CC^3$,
\begin{equation} \label{eq.pauli.identite}
    \left(  \sigma \cdot a \right)\left(  \sigma \cdot b \right) = < a,b > I_2 + i \sigma \cdot \left( a \wedge b \right),
\end{equation}
with $\sigma = \left( \sigma_1 , \sigma_2 , \sigma_3 \right)$. 
\begin{remark} \label{remarque.pauli.identite}
A consequence of identity \eqref{eq.pauli.identite} is if we consider
$$
\sigma = \begin{pmatrix}
\sigma_1 \\ \sigma_2
\end{pmatrix} \; , \; \; e_r = \begin{pmatrix}
 \cos(s) \\ \sin (s) 
\end{pmatrix} \; \; \text{and} \; \;   e_s = \begin{pmatrix}
- \sin (s) \\ \cos(s) 
\end{pmatrix} ,
$$
we have
$$
\left( \sigma \cdot e_r \right)^2 = \left( \sigma \cdot e_r \right)^2 = I_2 \; \; \text{and} \; \; \left( \sigma \cdot e_r \right)\left( \sigma \cdot e_s \right) = i \sigma_3 = -\left( \sigma \cdot e_s \right)\left( \sigma \cdot e_r \right) .
$$
\end{remark}

\begin{notation}
We write for all $h\in ]0,1]$ and $p \in \ZZ$,
$$
 \psi = \phi + h \gamma_{h,p} \ln \left( \frac{\rho_1}{\rho_2} \right) \theta ,
$$
with $\gamma_{h,p}$ defined in Proposition \ref{propo.potentiel.flux}. \\
Thus, we have $\mathbf{A_{h,p}} = \nabla^\perp \psi$.
\end{notation}

\begin{lemma}
The Dirichlet-Pauli operator in polar coordinates, denoted by $\widetilde{\cP_h}$, acts on $L^2 \autopar{ ]\rho_1, \rho_2 [ \times [0,2\pi[,\CC^2; r \diff r \diff s}$ as
\begin{equation*} 
\widetilde{\cP_h} = \lc -h^2 \autopar{\partial_{rr}^2 \; + \; \frac{1}{r} \partial_r } \; + \; \autopar{ h \;  \frac{\autopar{-i\partial_s - \gamma_{h,p} } }{r} - \partial_r \phi(r) }^2 \rc I_2 \; - \; hB(r) \sigma_3 ,
\end{equation*}
with $\gamma_{h,p}$ defined in Proposition \ref{propo.potentiel.flux}. \\
\end{lemma}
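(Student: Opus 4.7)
The plan is to first reduce the squared Dirac operator to a diagonal form, then pass to polar coordinates using that the chosen gauge potential is purely angular.

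Using Remark \ref{remarque.pauli.identite} (equivalently identity \eqref{eq.pauli.identite} with $\sigma = (\sigma_1,\sigma_2)$), one has for any vector field $\mathbf{v} = (v_1,v_2)$ with (possibly) non-commuting components
\[
[\sigma\cdot\mathbf{v}]^2 = |\mathbf{v}|^2\, I_2 + i\sigma_3\,[v_1,v_2].
\]
Applied to $\mathbf{v} = \mathbf{p} - \mathbf{A_{h,p}}$ and using $[p_j,A_k] = -ih\,\partial_j A_k$, the commutator gives $[v_1,v_2] = ih(\partial_1 A_2 - \partial_2 A_1) = ihB$, hence
\[
\cP_h = \bigl|\mathbf{p} - \mathbf{A_{h,p}}\bigr|^2 I_2 - hB\,\sigma_3.
\]

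Next, since $\phi$ is radial (Remark \ref{remark.phimin}) and $\theta$ is radial (Lemma \ref{lemme.theta}), so is $\psi := \phi + h\gamma_{h,p}\ln(\rho_1/\rho_2)\,\theta$, and $\mathbf{A_{h,p}} = \nabla^\perp\psi = e_s\,\partial_r\psi$, with $\partial_r\psi = \partial_r\phi + h\gamma_{h,p}/r$ by the explicit formula for $\nabla^\perp\theta$. Because $\mathbf{A_{h,p}} = \nabla^\perp\psi$ is divergence-free, the standard identity reads
\[
|\mathbf{p} - \mathbf{A_{h,p}}|^2 = -h^2\Delta + 2ih\,\mathbf{A_{h,p}}\cdot\nabla + |\mathbf{A_{h,p}}|^2.
\]
In polar coordinates $\Delta = \partial_r^2 + \tfrac{1}{r}\partial_r + \tfrac{1}{r^2}\partial_s^2$, and since $A \perp e_r$ one has $\mathbf{A_{h,p}}\cdot\nabla = (\partial_r\psi)\,\tfrac{\partial_s}{r}$ and $|\mathbf{A_{h,p}}|^2 = (\partial_r\psi)^2$.

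Finally, since $\partial_r\psi$ is $s$-independent it commutes with $\partial_s$, so a direct expansion gives
\[
\left(\frac{-ih\,\partial_s}{r} - \partial_r\psi\right)^{\!2} = -\frac{h^2}{r^2}\partial_s^2 + \frac{2ih\,\partial_r\psi}{r}\partial_s + (\partial_r\psi)^2,
\]
which lets us group the three $s$- and $A$-terms as a single square. Substituting $\partial_r\psi = \partial_r\phi + h\gamma_{h,p}/r$ yields
\[
|\mathbf{p}-\mathbf{A_{h,p}}|^2 = -h^2\!\left(\partial_r^2 + \tfrac{1}{r}\partial_r\right) + \left(\frac{h(-i\partial_s - \gamma_{h,p})}{r} - \partial_r\phi\right)^{\!2},
\]
and combining with $-hB\,\sigma_3$ gives the claim. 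The only point requiring care is verifying the square of the angular-plus-potential operator expands correctly; this is purely algebraic, relying on the $s$-independence of $r$, $\gamma_{h,p}$ and $\partial_r\phi$. There is no analytic obstacle.
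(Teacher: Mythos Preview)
Your proof is correct and arrives at the same result, but the route differs from the paper's. The paper works in polar coordinates from the outset: it writes the Dirac operator as $(\sigma\cdot e_r)\{-ih\partial_r\} + (\sigma\cdot e_s)\{-\tfrac{ih}{r}\partial_s - \partial_r\psi\}$ and then expands the square term by term, using the relations $(\sigma\cdot e_r)^2 = (\sigma\cdot e_s)^2 = I_2$ and $(\sigma\cdot e_r)(\sigma\cdot e_s) = i\sigma_3 = -(\sigma\cdot e_s)(\sigma\cdot e_r)$ from Remark \ref{remarque.pauli.identite}. You instead first reduce in Cartesian coordinates to $\cP_h = |\mathbf{p}-\mathbf{A}|^2 I_2 - hB\,\sigma_3$ via the commutator identity, and only then pass the scalar magnetic Laplacian to polar coordinates, exploiting $\mathrm{div}(\mathbf{A_{h,p}}) = 0$ and the radial, purely angular form of $\mathbf{A_{h,p}}$. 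Your approach is shorter and cleanly separates the spin contribution from the orbital one; the paper's brute-force expansion is more in line with the polar computations used later (e.g.\ in Proposition \ref{proposition.extension.operateur}) and does not rely on the divergence-free property of the gauge. Both are entirely valid.
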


\begin{proof} ~ \\
Recall that for $h > 0$ and $p \in \ZZ$, we have
\begin{equation*}
    \cP_h = \left[  \sigma  \cdot \left( -ih\nabla - \mathbf{A_{h,p}} \right) \right]^2 .
\end{equation*}
By writing the gradient in polar coordinates, we get
$$
\sigma  \cdot \left( -ih\nabla - \mathbf{A_{h,p}} \right) = \left(  \sigma \cdot e_r \right) \lb -ih \partial_r \rb + \left(  \sigma \cdot e_s \right) \lb -\frac{ih}{r} \partial_s - \partial_r \psi \rb 
$$
with $e_r = ( \cos(s) , \sin (s) )$ and $e_s = (- \sin (s) , \cos(s) )$. \\
We are reduced to compute the square of an operator of type $(R + S + T)$ with
$$
R = \left(  \sigma \cdot e_r \right) \lb -ih \partial_r \rb \; , \; \; S = \left(  \sigma \cdot e_s \right) \lb -\frac{ih}{r} \partial_s \rb \; \; \text{and} \; \; T = \left(  \sigma \cdot e_s \right) \lb - \partial_r \psi \rb . 
$$
Let $u \in \cC^{\infty} \left(]\rho_1, \rho_2 [ \times [0,2\pi[,\CC^2 \right)$, by using Remarque \ref{remarque.pauli.identite}, we have
\begin{align*}
    A(A+B+C) u & = I_2 \lb - h^2 \partial_{rr}^2 \rb u + i \sigma_3 \lb \frac{h^2}{r^2} \partial_s - \frac{h^2}{r} \partial^2_{sr} + ih \partial_r \psi \partial_r + ih \partial^2_{rr} \psi \rb u \\
    C(A+B+C) u &= -i\sigma_3 \lb ih \partial_r \psi \partial_{r} \rb u + I_2 \lb \frac{ih}{r} \partial_r \psi \partial_s + \left( \partial_r \psi \right)^2 \rb u .
\end{align*} 
Then, we get
\begin{align*}
    BA u &= \left( \sigma \cdot e_s \right) \lb \left( \sigma \cdot e_s \right) \lc  - \frac{h^2}{r} \partial_r \rc u + \left( \sigma \cdot e_r \right) \lc  - \frac{h^2}{r} \partial^2_{sr} \rc\rb u \\
    & = I_2 \lb - \frac{h^2}{r} \partial_r \rb u + i \sigma_3 \lb \frac{h^2}{r} \partial^2_{sr} \rb u ,
\end{align*}
and 
\begin{align*}
    B(B+C) u &= \left( \sigma \cdot e_s \right) \lb \left( \sigma \cdot e_r \right) \lc \frac{h^2}{r^2} \partial_s - \frac{ih}{r} \partial_r \psi \rc u + \left( \sigma \cdot e_s \right) \lc - \frac{h^2}{r^2} \partial^2_{ss} + \frac{ih}{r} \partial_r \psi \partial_s \rc u\rb \\
    & = i \sigma_3 \lb \frac{ih}{r} \partial_r \psi - \frac{h^2}{r^2} \partial_s  \rb u + I_2 \lb - \frac{h^2}{r^2} \partial^2_{ss} + \frac{ih}{r} \partial_r \psi \partial_s \rb u .
\end{align*}
By combining the terms, we get
\begin{multline*}
    \widetilde{\cP_h} u =  \left( -h^2 \lc \partial^2_{rr} + \frac{1}{r} \partial_r  \rc + \lc \left( \partial_r \psi \right)^2 + 2 \left( \partial_r \psi \right)  \frac{ih \partial_s}{r} - \frac{h^2}{r^2} \partial^2_{ss} \rc  \right) I^2 u \\ 
    -h \left( \partial^2_{rr} \psi + \frac{1}{r} \partial_r \psi  \right) \sigma_3 u .
\end{multline*}
By Lemma \ref{lemme.theta}, we have
\begin{equation} \label{eq.annexe.partial_rpsi}
    \partial_r \psi = \partial_r \phi + \frac{h \gamma_{h,p}}{r} .
\end{equation}
Finally, by factorizing and applying equation \eqref{eq.annexe.partial_rpsi}, we get
\begin{equation*} 
\widetilde{\cP_h} = \lc -h^2 \autopar{\partial_{rr}^2 \; + \; \frac{1}{r} \partial_r } \; + \; \autopar{ h \;  \frac{\autopar{-i\partial_s - \gamma_{h,p} } }{r} - \partial_r \phi(r) }^2 \rc I_2 \; - \; hB(r) \sigma_3 .
\end{equation*}
\end{proof}

\subsection{Spectrum of fibered operator} ~ \\
We prove here Lemma \ref{lemme.union.sp}.
\begin{proof} Recall that $\cP_h$ acts as $\widetilde{\cP_h }$ in polar coordinates and $\widehat{\cP_h } = r^{1/2} \widetilde{\cP_h } r^{-1/2}$. We thus have the identity
$$
Sp \autopar{ \cP_h } = Sp \autopar{ \widetilde{\cP_h } } = Sp \autopar{\widehat{\cP_h }}.
$$
Let us show the second equality.
\begin{itemize}
\item Let $\lambda \in \displaystyle{\bigcup_{m\in \ZZ} Sp \autopar{\cP_{h,m}}}$, there exist $m \in \ZZ$ and $v_m \in  H_{0}^1  \cap H^2 \autopar{ [\rho_1, \rho_2 ],\CC^2}$ such that $\cP_{h,m} v_m = \lambda v_m$.\\
We have
$$
\lambda u_m (r) e^{im s} = \cP_{h,m} \autopar{u_m (r)} e^{ims} = \widehat{\cP_{h}} \autopar{v_m (r) e^{ims}}.
$$
This shows the first inclusion.
\item Let $\lambda \in Sp \autopar{ \widehat{\cP_h} } $, $\ie$ there exists $v \in H_{0}^1  \cap H^2 \autopar{[\rho_1, \rho_2 ]\times [0,2\pi[,\CC^2; \diff r \diff s} \backslash \{ 0 \}$ such that $ \widehat{\cP_h}  v = \lambda v$.\\
We can decompose the eigenfunction into Fourier series
$$
v(r,s) = \sum_{m\in \ZZ} v_m (r) e^{im s}.
$$
Then, in the sense of tempered distributions, we have
\begin{equation*}
  0 = \left(  \widehat{\cP_h}  - \lambda \Id \right) v(r,s) = \sum_{m\in \ZZ}  \autopar{ \cP_{h,m} - \lambda \Id} v_m (r) e^{im s} 
\end{equation*}
But $v\neq 0$, so there is at least one $m\in \ZZ$ such that $u_m \neq 0$, $\ie$
$$
\cP_{h,m} u_m = \lambda u_m .
$$
\end{itemize}
\end{proof}

\subsection*{Acknowledgements} I am very grateful to Nicolas Raymond and Loïc Le Treust for our many discussions and their valuable support throughout this work. This work is supported by ANR DYRAQ ANR-17-CE40-0016-01 and IRP - CNRS SPEDO.

\newpage

\end{document}